\documentclass[a4paper]{amsart}
\usepackage[T1]{fontenc}
\usepackage[utf8x]{inputenc}
\usepackage[english]{babel}
\usepackage{amsmath,amsfonts,amssymb}
\usepackage{graphicx}
\usepackage{hyperref}
\usepackage{enumerate}
\pagestyle{plain}
\usepackage{xcolor}


\usepackage{amsthm}

\theoremstyle{definition}
\newtheorem{D}{Definition}[section]

\theoremstyle{plain}
\newtheorem*{theorem*}{Theorem}
\newtheorem{theorem}{Theorem}[section]
\newtheorem{lemma}[theorem]{Lemma}
\newtheorem{cor}[theorem]{Corollary}
\newtheorem{prop}[theorem]{Proposition}

\theoremstyle{definition}
\newtheorem{rem}[theorem]{Remark}

\newcommand{\R}{\ensuremath{\mathbb R}}
\newcommand{\N}{\ensuremath{\mathbb N}}

\newcommand{\s}{\ensuremath{\mathbb S}}
\newcommand{\eps}{\ensuremath{\varepsilon}}
\newcommand{\p}{\ensuremath{\frac{2n}{n-2}}}
\newcommand{\dist}{\ensuremath{\mbox{d}_g}}
\newcommand{\distArg}[1]{\ensuremath{\mbox{d}_{#1}}}
\newcommand{\norm}[1]{\ensuremath{\left\|#1\right\|}}
\newcommand{\cutoff}{\ensuremath{\rho_{\varepsilon}}}

\newcommand{\linfty}[1]{\ensuremath{(#1)}}
\newcommand{\tub}[1]{\ensuremath{\Sigma^{#1}}}
\newcommand{\Sob}[1]{\ensuremath{W^{1,#1}(X)}}


\DeclareMathOperator{\vol}{Vol}

\makeatletter
          \def\@setcopyright{}
          \def\serieslogo@{}
          \makeatother


\author{Ilaria Mondello}
\address{UPMC Université Paris 6, Institut de Mathématiques de Jussieu-Paris Rive Gauche, 
UMR 7586}
\email{ilaria.mondello@imj-prg.fr}
\thanks{This work is supported by a public grant overseen by the French National Research Agency (ANR) as part of the ``Investissements d'Avenir'' program (reference: ANR-10-LABX-0098)}
\title{An Obata singular theorem for stratified spaces}

\date{}
\begin{document}

\nocite{*}
\bibliographystyle{amsalpha} 

\maketitle

\begin{abstract}
Consider a stratified space with a positive Ricci lower bound on the regular set and no cone angle larger than $2\pi$. For such stratified space we know that the first non-zero eigenvalue of the Laplacian is larger than or equal to the dimension. We prove here an Obata rigidity result when the equality is attained: the lower bound of the spectrum is attained if and only if the stratified space is isometric to a spherical suspension. Moreover, we show that the diameter is at most equal to $\pi$, and it is equivalent for the diameter to be equal to $\pi$ and for the first non-zero eigenvalue of the Laplacian to be equal to the dimension. We finally give a consequence of these results related to the Yamabe problem. Consider an Einstein stratified space without cone angles larger than $2\pi$: if there is a metric conformal to the Einstein metric and with constant scalar curvature, then it is an Einstein metric as well. Furthermore, if its conformal factor is not a constant, then the space is isometric to a spherical suspension.  
\end{abstract}

\section*{Introduction}

The interest in the geometric study of singular metric spaces has been constantly increasing in the last years. Singular metric spaces appear easily as quotients or Gromov-Hausdorff limits of smooth manifolds. Thanks to the works of D.~Bakry and M.~Émery, or K.T.~Sturm, J.~Lott and C.~Villani, and many others, there are various way of defining the notions of curvature and dimension in a more general setting than the one of Riemannian manifolds. Some of the possible questions in this wide domain of mathematics can be collected in the following: which classical results of Riemannian geometry hold in the more general setting of singular metric spaces? 

In this paper we are interested in a particular class of singular metric spaces, which are called stratified spaces and generalize the notion of conical singularity. In fact, a compact stratified space $X$ can be decomposed into a regular dense set $\Omega$, which is a smooth manifold of dimension $n$, and in a singular set, with different components $\Sigma^j$ of possibly different dimensions $j$ smaller than $n$, called singular strata, with a local ``cone-like'' structure. What we mean is that the neighbourhood of a point in a singular stratum $\Sigma^j$ is the product of an Euclidean ball of dimension $j$ and a cone over a \emph{link}. This latter can be a compact manifold (in which case we have a manifold with simple edges) or a compact stratified space. Singular strata of codimension one are not admitted in the definition. The easiest examples of stratified space are manifolds with isolated conical singularities; in order to fix the ideas, one can also imagine to construct singularities along a curve, in which case the neighbourhood of a singular point is the product between an interval and a cone of the appropriate dimension. We observe that the link of a singular stratum of codimension two $\Sigma^{n-2}$ is a circle $\s^1$, and a cone over a circle has an angle $\alpha$: if $\alpha$ is smaller than $2\pi$, then the cone has positive curvature in the sense of Alexandrov, negative otherwise. We refer to $\alpha$ has the cone angle of the stratum $\Sigma^{n-2}$. On a stratified space we can consider an iterate edge metric, as defined in \cite{ALMP} or \cite{ACM12}, which is a smooth Riemannian metric on the regular set $\Omega$, and define the usual tools of geometric analysis. 

In \cite{Mondello} we introduced a class of stratified spaces, \emph{admissible} stratified spaces, which have, roughly speaking, a positive Ricci lower bound. What we mean is that the Ricci tensor is bounded by below by a positive constant in the regular set and there is an additional condition on the stratum of codimension two, in order to avoid the situation of a cone angle larger than $2\pi$, which would introduce in some sense negative curvature. The question is whether we can find geometric results on this class of stratified spaces which recover classical theorems for compact Riemannian manifolds with a positive Ricci lower bound. In \cite{Mondello} we already proved a singular version of the Lichnerowicz theorem: the first non-zero eigenvalue of the Laplacian is larger than or equal to the dimension of the space. Moreover, this allows one to deduce a Sobolev inequality with explicit constants depending only on the volume and on the dimension of the space. The main goal of this paper is to prove the following rigidity result for admissible stratified spaces:

\begin{theorem*}[Singular Obata]
Let $(X^n, g)$ be an admissible stratified space. The first non-zero eigenvalue of the Laplacian $\lambda_1(X)$ is equal to the dimension $n$ if and only if there exists an admissible stratified space $(\hat{X}^{n-1}, \hat{g})$ such that $(X^n,g)$ is isometric to the spherical suspension of $\hat{X}$, that is:
\begin{equation*}
\left(\hat{X} \times \left[-\frac{\pi}{2}, \frac{\pi}{2} \right], dt^2+\cos^2(t) \hat{g} \right). 
\end{equation*}
\end{theorem*}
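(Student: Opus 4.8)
The plan is to follow the strategy of the classical Obata theorem, transplanted to the regular set $\Omega$ and made rigorous up to the singular set using admissibility. The easier implication is to check that a spherical suspension of an admissible $\hat{X}$ realizes the equality. A direct computation in the warped product $dt^2 + \cos^2(t)\hat{g}$ shows that the function $f(y,t) = \sin(t)$ satisfies $\Delta f = n f$; since it is orthogonal to the constants (by oddness of $\sin$ against the weight $\cos^{n-1}(t)$) and lies in the natural Sobolev space, it is an eigenfunction with eigenvalue $n$, and the Lichnerowicz bound $\lambda_1(X) \geq n$ from \cite{Mondello} forces $\lambda_1(X) = n$. One must also record that the suspension of an admissible space is again admissible, which is where the hypothesis of no cone angle larger than $2\pi$ re-enters.

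For the converse, I would revisit the proof of the Lichnerowicz inequality in \cite{Mondello}, which rests on integrating the Bochner formula against a first eigenfunction $f$ normalized by $\Delta f = n f$. Tracking the equality case, the integrated Bochner identity collapses to the pointwise \emph{Obata equation}
\begin{equation*}
\nabla^2 f = -f\, g \qquad \text{on } \Omega,
\end{equation*}
together with $\mathrm{Ric}(\nabla f, \nabla f) = (n-1)|\nabla f|^2$ along the flow. The delicate point here is analytic: one must ensure that the integrations by parts carry no contribution from the singular strata, and that $f$ is regular enough (elliptic regularity on $\Omega$, plus control near $\Sigma^j$ coming from codimension at least two and the cone-angle condition) for the pointwise identity to hold and for the subsequent geometric arguments to be legitimate.

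Granting the Obata equation, the reconstruction proceeds as in the smooth case. Differentiating shows that $|\nabla f|^2 + f^2$ is constant on $\Omega$; normalizing $\max f = 1$ gives $|\nabla f|^2 = 1 - f^2$, so that $t := \arcsin f$ has $|\nabla t| \equiv 1$ and its gradient trajectories are unit-speed geodesics joining the critical sets $\{f = \pm 1\}$. Setting $\hat{X} := \{f = 0\}$ with its induced metric $\hat{g}$, the normal flow of $\nabla t$ yields a diffeomorphism $\hat{X} \times (-\pi/2,\pi/2) \to X$ away from the poles, and the Obata equation read through $f = \sin(t)$ gives the second fundamental form of the level hypersurfaces $\{t = s\}$ as $-\tan(s)$ times the induced metric; integrating the resulting evolution $\partial_s h_s = -2\tan(s)\, h_s$ produces $h_s = \cos^2(s)\,\hat{g}$, i.e. the suspension metric $dt^2 + \cos^2(t)\hat{g}$. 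It remains to identify the poles $\{f = \pm 1\}$ with the two suspension points and to verify that $\hat{X}$ is itself admissible, the latter by restricting the stratification, the Ricci bound and the cone-angle condition of $X$ to the equator.

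I expect the principal obstacle to be the interaction between the gradient flow of $f$ and the stratification: in the smooth case the structure of the critical set and the global product structure follow from the ODE $|\nabla f|^2 = 1 - f^2$ and standard Morse theory, whereas here one must show that the flow preserves the stratified structure (carrying $\Sigma_{\hat{X}}^{j-1} \times (-\pi/2,\pi/2)$ onto the strata of $X$), that no stratum of codimension one is created, and that the two poles are regular or cone-like points consistent with the suspension. Equally delicate, and logically prior, is the justification that the equality case of Bochner yields the pointwise Obata equation with no hidden boundary term at the singular set — precisely the step where the admissibility hypotheses must be used in full.
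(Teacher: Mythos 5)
Your outline reproduces the skeleton of the paper's argument --- the easy direction via $\sin(t)$, the Obata equation $\nabla d\varphi = -\varphi g$ on $\Omega$ from equality in the Lichnerowicz/Bochner computation, and the warped-product reconstruction from umbilicity of the level sets of $t=\arcsin\varphi$ --- but the two points you defer as ``obstacles'' are precisely the substance of the proof, and you propose no mechanism for either. First, your normalization ``$\max f = 1$'' is not free: from $|\nabla f|^2 + f^2 = c$ you may scale so that $c=1$, but on a stratified space the maximum of $f$ could a priori be attained at a \emph{singular} point, where no first-order condition is available, so $\max f = 1$ (equivalently $\mathrm{diam}(X)=\pi$, i.e.\ the existence of the two poles $P$, $N$) is a theorem, not a normalization. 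The paper proves it via a comparison argument for the first Dirichlet eigenvalue of $\mathcal{U}_+=\{\varphi>0\}$: assuming $\max\varphi = M < 1$, one constructs $u_\alpha(t)=t-t^\alpha$ with $\Delta_g(u_\alpha\circ\varphi) > n\,(u_\alpha\circ\varphi)$ on $\Omega\cap\mathcal{U}_+$, and the integration-by-parts and limiting argument forces $\lambda_1(\mathcal{U}_+)\geq n$ with equality and rigidity, yielding a contradiction.

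Second, and more seriously: your claim that the gradient trajectories of $t$ are unit-speed geodesics ``joining the critical sets $\{f=\pm1\}$'' is exactly what fails without argument in the stratified setting, since the flow is defined only on $\Omega$ and could reach $\Sigma$ at some time $T_x < \pi/2$, where the Obata equation no longer applies. The paper's resolution --- entirely absent from your proposal --- is an \emph{induction on the dimension} of the stratified space combined with a tangent-cone analysis at a putative first singular hitting point $x_0$: Lemma \ref{Lemma3} shows every point lies on a minimizing geodesic from $P$ to $N$, Lemma \ref{Lemma2} then gives $\mathrm{diam}(S_{x_0})=\pi$, the induction hypothesis together with Corollary \ref{splitting} yields $C(S_{x_0})\cong \R^m\times C(Y_0)$, and a blow-up argument identifying the rescaled coordinate along $\gamma_{y_0}$ with minus a Busemann function shows that $C(S_{x_0})$ contains a subset isometric to $\R\times\R_+\times \s^{n-2}_+$, forcing $m=n-1$ and hence $x_0\in\Omega$, a contradiction. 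Without this step (or a substitute for it), the global product structure $\hat{X}\times(-\pi/2,\pi/2)\to X$ never gets off the ground. Relatedly, ``restricting the stratification to the equator'' does not by itself make $\hat{X}$ an admissible stratified space: the paper obtains this by blowing up at the pole $P$ and identifying the metric completion of $\Gamma_0$ with the tangent sphere $S_P$, which is admissible by Lemma 1.1 of \cite{Mondello}.
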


When $(X^n,g)$ is a compact smooth manifold, the spherical suspension is simply a sphere of dimension $n$ with the canonical metric, and thus our theorem recover the known result of M.~Obata for compact smooth manifolds. Before proving the previous theorem, we recall a result due to D.~Bakry and M.~Ledoux (\cite{BakryLedoux}, Theorem 4) to deduce an upper bound on the diameter of an admissible stratified space: $\mbox{diam}(X)$ is less than or equal to $\pi$. The proof by D.~Bakry and M.~Ledoux relies on a spectral gap and on a Sobolev inequality as the ones we proved in \cite{Mondello}, therefore it is easily adaptable to our setting. Furthermore Theorem 4 \cite{BakryLedoux} shows that if the upper bound for the diameter is attained, then the first non-zero eigenvalue of the Laplacian is equal to $n$ and we know an explicit eigenfunction depending on the distance from a point. We prove that, in turns, if $\lambda_1(X)$ is equal to the dimension, then the diameter is equal to $\pi$. We have then the following theorem:

\begin{theorem*}[Singular Myers]
Let $(X,g)$ be an admissible stratified space of dimension $n$. Then the following statements are equivalent:
\begin{itemize}
\item[(i)] The first non-zero eigenvalue of the Laplacian $\Delta_g$ is equal to $n$.
\item[(ii)] The diameter of $X$ is equal to $\pi$.
\item[(iii)] There exist extremal functions for the Sobolev inequality. 
\end{itemize} 
\end{theorem*}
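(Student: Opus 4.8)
The plan is to establish the equivalences through $(\mathrm{i})\Leftrightarrow(\mathrm{ii})$ and $(\mathrm{i})\Leftrightarrow(\mathrm{iii})$, relying on three ingredients already at our disposal: the spectral gap and the explicit Sobolev inequality of \cite{Mondello}, the diameter statement of \cite{BakryLedoux} (Theorem 4), and the Singular Obata theorem stated above. The implication $(\mathrm{ii})\Rightarrow(\mathrm{i})$ is already contained in \cite{BakryLedoux}; the genuinely new step is $(\mathrm{i})\Rightarrow(\mathrm{ii})$, for which we use Obata, while the remaining equivalence with $(\mathrm{iii})$ follows by transporting the Bakry--Ledoux equality analysis to the stratified setting.

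For $(\mathrm{i})\Rightarrow(\mathrm{ii})$ I would invoke the Singular Obata theorem: if $\lambda_1(X)=n$, then $(X,g)$ is isometric to a spherical suspension $\big(\hat X\times[-\tfrac\pi2,\tfrac\pi2],\,dt^2+\cos^2(t)\hat g\big)$, so it suffices to compute the diameter of this model. Writing a path as $\gamma(s)=(\hat x(s),t(s))$, its length is $\int\sqrt{\dot t^2+\cos^2(t)\,|\dot{\hat x}|_{\hat g}^2}\,ds\ge\int|\dot t|\,ds$, so any two points lie at distance at least the difference of their $t$-coordinates. The two suspension points (the fibres $t=\pm\tfrac\pi2$, where $\cos t$ vanishes and the link collapses) are therefore at distance at least $\pi$, and the meridian $\hat x\equiv\mathrm{const}$ realizes exactly $\pi$. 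Hence $\mathrm{diam}(X)\ge\pi$, and combined with the upper bound $\mathrm{diam}(X)\le\pi$ from \cite{BakryLedoux} we obtain $\mathrm{diam}(X)=\pi$.

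For $(\mathrm{ii})\Rightarrow(\mathrm{i})$ and the equivalence with $(\mathrm{iii})$ I would adapt Theorem 4 of \cite{BakryLedoux}, whose proof uses only a spectral gap and a Sobolev inequality of the type established in \cite{Mondello}, together with the heat-semigroup and $\Gamma_2$ (Bochner) calculus. The direction $(\mathrm{ii})\Rightarrow(\mathrm{i})$ is already contained in that theorem. For $(\mathrm{iii})$, the point is that the explicit Sobolev inequality of \cite{Mondello} carries an equality (rigidity) analysis: an extremal function exists precisely when every inequality used in its derivation --- in particular the underlying Bochner inequality coming from the Ricci lower bound --- is saturated, and this in turn saturates the spectral gap, forcing $\lambda_1(X)=n$. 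Conversely, when $\lambda_1(X)=n$ (equivalently, by the above, when $X$ is a spherical suspension of diameter $\pi$), the corresponding equality cases provide explicit extremals, namely functions of the distance to a suspension point. This yields $(\mathrm{iii})\Leftrightarrow(\mathrm{i})$.

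The main obstacle is analytic rather than geometric: one must verify that the Bakry--Ledoux semigroup and integration-by-parts arguments, together with the equality analysis of the Sobolev inequality, carry over to the stratified space with no uncontrolled contribution from the singular strata. This is precisely where \emph{admissibility} is used --- the positive Ricci lower bound on $\Omega$ and the condition forbidding cone angles larger than $2\pi$ guarantee the self-adjointness of $\Delta_g$, the validity of the Bochner formula without boundary terms, and enough regularity of the extremal functions near the singular set to run the rigidity argument. Once this framework from \cite{Mondello} is in place, the computation of the diameter of the suspension in $(\mathrm{i})\Rightarrow(\mathrm{ii})$ is the only step specific to our singular Obata rigidity.
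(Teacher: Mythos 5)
Your reduction of $(\mathrm{i})\Rightarrow(\mathrm{ii})$ to the Singular Obata theorem is circular with respect to the paper's logical architecture, and this is a genuine gap, not a stylistic difference. In the paper the equivalence $(\mathrm{i})\Leftrightarrow(\mathrm{ii})$ is proved \emph{before} Theorem \ref{ObataSing} and is an essential ingredient of its proof: the induction step in the proof of Theorem \ref{ObataSing} immediately replaces the hypothesis $\lambda_1(X)=n$ by ``diameter equal to $\pi$'', which is exactly the implication $(\mathrm{i})\Rightarrow(\mathrm{ii})$ you are trying to establish; moreover Lemma \ref{Lemma3} needs two antipodal points $P,N$ with $\distArg{g}(P,N)=\pi$ (which presupposes $(\mathrm{ii})$) and works with the eigenfunctions $\cos(\dist(\cdot,P))$ and $\cos(\dist(\cdot,N))$ furnished by the Bakry--Ledoux equality case once the diameter is $\pi$. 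So quoting Obata to get $(\mathrm{i})\Rightarrow(\mathrm{ii})$ inverts the order of dependence: Obata cannot be available at this stage unless one first proves $(\mathrm{i})\Rightarrow(\mathrm{ii})$ by independent means. (Your computation that the spherical suspension has diameter exactly $\pi$ is correct, but it is not where the difficulty lies.)

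What the paper actually does for $(\mathrm{i})\Rightarrow(\mathrm{ii})$ is a direct argument from the eigenfunction: if $\Delta_g\varphi=n\varphi$, the equality case in the singular Lichnerowicz inequality gives $\nabla d\varphi=-\varphi g$ on $\Omega$, whence $|\nabla\varphi|^2+\varphi^2$ is constant, normalized to $1$; then $f=\arcsin(\varphi)$ is $1$-Lipschitz on all of $X$ (this uses Lemma \ref{curve} to pass Lipschitz bounds across the singular set), and the whole point is to show that $\varphi$ actually attains the values $\pm 1$, so that $f$ has image $[-\pi/2,\pi/2]$ and $\mbox{diam}_L(X)\geq\pi$. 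This is done by contradiction: if $\max\varphi=M<1$, the comparison function $u_\alpha(t)=t-t^\alpha$, for $\alpha$ large enough that $\sqrt{\alpha(\alpha+n)^{-1}}>M$, satisfies $\Delta_g(u_\alpha\circ\varphi)>n(u_\alpha\circ\varphi)$ on $\Omega\cap\mathcal{U}_+$ where $\mathcal{U}_+=\{\varphi>0\}$, and an integration-by-parts argument with the regularization $u_\alpha+\varepsilon$ shows $\lambda_1(\mathcal{U}_+)\geq n$ with a strictness that contradicts $\varphi$ itself being a Dirichlet eigenfunction on $\mathcal{U}_+$ with eigenvalue $n$. None of this appears in your proposal, and it is the genuinely new content of the theorem; your remaining steps ($(\mathrm{ii})\Rightarrow(\mathrm{i})$ and the equivalence with $(\mathrm{iii})$ via Theorem 4 of \cite{BakryLedoux}) do match the paper, although your sketch of the extremal-function rigidity analysis is vaguer than what you would need if you did not simply cite Bakry--Ledoux as the paper does.
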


These results, together with a study of minimizing geodesics and tangent cones in an admissible stratified space, give us the main ingredients to prove the theorem ``à la'' Obata. 

We finally discuss an application of the rigidity result to the Yamabe problem, which consists in looking for a metric of constant scalar curvature among the conformal class of a given metric.  We refer to \cite{LeeParker} for a description of the Yamabe problem on compact smooth manifolds, and to \cite{ACM12} for the same in the setting of stratified spaces. The Yamabe problem has a variational formulation depending on a conformal invariant, called the Yamabe constant: this latter is defined as the infimum of the integral of the scalar curvature among conformal metrics of volume one. If there exists a conformal metric attaining the Yamabe constant, it has constant scalar curvature and it is called a Yamabe metric. A metric of constant scalar curvature is not necessarily a Yamabe metric, but we have shown in \cite{Mondello} that an Einstein metric on an admissible stratified space is a Yamabe metric. Here we give another proof of this result, under the assumption that a Yamabe minimizer exists. Moreover we show the following: 

\begin{theorem*}
Let $(X^n,g)$ be an admissible stratified space with Einstein metric. If there exists $\tilde{g}$ in the conformal class of $g$, not homothetic to $g$, with constant scalar curvature, then $\tilde{g}$ is an Einstein metric as well and $(X^n,g)$ is isometric to the spherical suspension of an admissible stratified space $(\hat{X}^{n-1}, \hat{g})$ with Einstein metric.
\end{theorem*}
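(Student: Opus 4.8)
The plan is to reduce the statement to the Singular Obata theorem already proved, by showing that the conformal factor relating $g$ and $\tilde g$ produces an eigenfunction realizing $\lambda_1 = n$. First I would normalize: since $g$ is Einstein and admissible, $\mathrm{Ric}_g = \kappa g$ with $\kappa > 0$, and after rescaling I may assume $\mathrm{Ric}_g = (n-1)g$, so $R_g = n(n-1)$; this changes neither the conformal class nor the hypothesis that $\tilde g$ is not homothetic to $g$. Writing $\tilde g = u^{4/(n-2)}g$ with $u > 0$, the constant scalar curvature condition is the Yamabe-type equation $\frac{4(n-1)}{n-2}\Delta_g u + R_g u = R_{\tilde g}\,u^{(n+2)/(n-2)}$ on the regular set $\Omega$, with $R_{\tilde g}$ constant. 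Setting $\phi = u^{-2/(n-2)}$, so that $\tilde g = \phi^{-2}g$, a direct conformal computation on $\Omega$ together with the vanishing of the trace-free Ricci of the Einstein metric gives the key identity
\[
\widetilde{\mathrm{Ric}}_0 = (n-2)\,\phi^{-1}\big(\mathrm{Hess}_g\phi\big)_0,
\]
where the subscript $0$ denotes the trace-free part (the same for $g$ and for the conformal $\tilde g$). Thus proving that $\tilde g$ is Einstein is equivalent to proving that the trace-free Hessian $E := (\mathrm{Hess}_g\phi)_0$ vanishes identically.

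Second, I would exploit that $\tilde g$ has constant scalar curvature: the contracted second Bianchi identity gives $\delta_{\tilde g}\widetilde{\mathrm{Ric}} = -\tfrac12\,dR_{\tilde g} = 0$, hence $\delta_{\tilde g}\widetilde{\mathrm{Ric}}_0 = 0$. Pairing the divergence-free trace-free tensor $\widetilde{\mathrm{Ric}}_0$ against $\mathrm{Hess}_{\tilde g}\phi$ and integrating by parts — equivalently, integrating the divergence of the one-form $\widetilde{\mathrm{Ric}}_0(\nabla_{\tilde g}\phi,\cdot)$ — yields Obata's integral identity. Using the conformal formula $\mathrm{Hess}_{\tilde g}\phi = \mathrm{Hess}_g\phi + 2\phi^{-1}\,d\phi\otimes d\phi - \phi^{-1}|\nabla\phi|^2_g\,g$ together with the identity above, this reduces — after rewriting the resulting cross term $E(\nabla\phi,\nabla\phi)$ by a further integration by parts and the relation $\mathrm{div}_g E = \frac{n-1}{n}\,d(\Delta_g\phi) + (n-1)\,d\phi$ coming from the Einstein condition — to a weighted integral of $|E|^2$ with a sign-definite weight, which forces $E \equiv 0$. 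Hence $\widetilde{\mathrm{Ric}}_0 = 0$ and $\tilde g$ is Einstein, the first assertion.

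The hard part is that all these integrations by parts are performed only on the regular set $\Omega$, and on a stratified space they a priori produce boundary contributions along tubular neighbourhoods of the singular strata. I would control them exactly as the Sobolev inequality and the Lichnerowicz estimate were controlled in \cite{Mondello}: cut off near the singular set with the functions $\cutoff$, whose gradients are supported in thin tubes, and let $\eps \to 0$. This requires a priori regularity and uniform bounds for the conformal factor $u$ up to the singular set — boundedness of $u$ away from $0$ and $\infty$, and integrability of its first and second derivatives — which follow from elliptic regularity on $\Omega$ together with the analysis of the Yamabe equation near conical and edge singularities of \cite{ACM12}. The geometric input making the boundary terms vanish is that the singular set has codimension at least two, hence negligible capacity, and that admissibility — positive Ricci lower bound and no cone angle larger than $2\pi$ — keeps the relevant volume and gradient estimates under control; making this rigorous is the main technical obstacle.

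Finally, once $E = (\mathrm{Hess}_g\phi)_0 \equiv 0$, we have $\mathrm{Hess}_g\phi = \frac{1}{n}(\mathrm{tr}_g\mathrm{Hess}_g\phi)\,g$; differentiating this relation and commuting covariant derivatives by means of $\mathrm{Ric}_g = (n-1)g$ shows that, after subtracting its mean, $\psi := \phi - \bar\phi$ is a non-trivial eigenfunction of $\Delta_g$ with eigenvalue exactly $n$. Since $\phi$ is non-constant, because $\tilde g$ is not homothetic to $g$, it follows that $\lambda_1(X) = n$, the Lichnerowicz bound of \cite{Mondello} being attained. The Singular Obata theorem then yields that $(X^n,g)$ is isometric to the spherical suspension of an admissible stratified space $(\hat X^{n-1},\hat g)$; and computing the Ricci tensor of the warped product $dt^2 + \cos^2(t)\hat g$ under the constraint $\mathrm{Ric}_g = (n-1)g$ forces $\mathrm{Ric}_{\hat g} = (n-2)\hat g$, so that $(\hat X,\hat g)$ is itself an admissible Einstein stratified space, completing the proof.
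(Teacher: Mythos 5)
Your proposal is correct in substance and, for its core, follows the same route as the paper. The paper's Theorem \ref{Obata} (which follows Viaclovsky, as you essentially do) also normalizes $S_{\tilde g}=S_g=n(n-1)$, writes $\tilde g=\phi^{-2}g$ with $\phi=u^{-2/(n-2)}$, uses the conformal identity $E_{\tilde g}=E_g+(n-2)\phi^{-1}\bigl(\nabla d\phi+\frac{\Delta_g\phi}{n}g\bigr)$ with $E_g=0$, and kills $E_{\tilde g}$ by pairing it against $\nabla d\phi$, integrating by parts, invoking the Bianchi identity (constancy of $S_{\tilde g}$), and letting the cut-offs $\cutoff$ degenerate; the a priori bounds you flag as ``the main technical obstacle'' ($u$ positive and bounded, $|\nabla u|\in L^{\infty}(X)\cap W^{1,2}(X)$, $\Delta_g\phi$ bounded) are precisely the content of Proposition \ref{reg} and Lemma \ref{reg1}, so your strategy for the singular set is the paper's, merely not executed. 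Where you genuinely diverge is the passage from the concircular equation $\eqref{csf}$ to an eigenfunction: you take the divergence of $\nabla d\phi=-\frac{\Delta_g\phi}{n}g$, commute derivatives using $Ric_g=(n-1)g$ to get $d(\Delta_g\phi)=n\,d\phi$, and conclude that $\phi$ shifted by a constant is a nontrivial eigenfunction with eigenvalue $n$ (one still has to check, via the cut-off/density argument used throughout the paper and the regularity $\phi\in W^{2,2}(X)$ with bounded gradient, that this holds in the spectral sense for the Friedrichs extension). The paper instead considers the family $\phi_t=(1-t)\phi+t$ of solutions of $\eqref{csf}$, i.e.\ a curve of Einstein conformal metrics $g_t$, and obtains the eigenfunction $\dot v_0$ by linearizing the Yamabe equation along this family (Corollary \ref{YamabeSimpleEdges}). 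Your route is shorter and proves exactly the statement at hand; the paper's detour buys an additional result, namely the constancy of $\vol_{g_t}(X)$ along the family and hence that the Einstein metric attains the Yamabe constant, which is part of Theorem \ref{ObataYamabe} but not of the statement you were given. Both arguments conclude identically: Theorem \ref{ObataSing} produces the spherical suspension, and the warped-product form $dt^2+\cos^2(t)\hat g$ together with $Ric_g=(n-1)g$ forces $Ric_{\hat g}=(n-2)\hat g$, so $(\hat X,\hat g)$ is Einstein and admissible.
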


This is also true for compact smooth manifolds due to another theorem of M.~Obata. 

We notice that a Myers theorem has been proven by C.~Ketterer in \cite{Ketterer2} for metric measure spaces which satisfy a curvature-dimension condition $RCD(K,n)$. Moreover, if the upper bound is attained, then the metric measure space is isometric to a spherical suspension. His proof relies on a splitting theorem of N.~Gigli \cite{Gigli}. As a consequence, the author also proved an Obata rigidity theorem in \cite{Ketterer}. Our analogous result clearly applies in a less general setting, but the advantage if its proof is that it is based on simple tools coming from Riemannian geometry, and essentially on the study of an equation for the Hessian of a function. It remains an interesting question whether admissible stratified spaces satisfy a curvature-dimension condition in the sense of Bakry-Émery, Sturm-Lott-Villani or $RCD(K,n)$, since they could give new concrete examples of metric measure spaces belonging to this setting. \newline

\textbf{Acknowledgements}: I would like to thank Gilles Carron for countless discussions, Rafe Mazzeo for helpful suggestions, Kazuo Akutagawa, Gilles Courtois and Vincent Minerbe for their remarks when I was completing this work. 

\section{Preliminaries}

We introduce here a detailed definition of a stratified space. For this purpose, we precise that for a truncated cone $C(Z)$ over a compact metric space $Z$ we mean the product $Z\times [0,1]$ with the equivalence relation $(z_1, 0)\sim (z_2,0)$ for all $z_1, z_2$ in $Z$: we identify all the points in $Z\times \{0\}$ to a unique point, called the vertex of the cone. We say that a truncated cone is of size $\delta$ if we consider the interval $[0, \delta]$ instead of $[0,1]$. If $Z$ is a compact manifold endowed with the Riemannian metric $k$, then a conic metric on $C(Z)$ has the form $dr^2+r^2k$. 

\begin{D}
Let $(X,d)$ be a compact metric space. We say that $X$ is a stratified space if it admits a decomposition of the form:
\begin{equation*}
X=\Omega \sqcup \Sigma, 
\end{equation*}
where $\Omega$ is an open smooth manifold of dimension $n$ dense in $X$, and $\Sigma$ is the disjoint union of a finite number $N$ of components $\Sigma^j$, $j=1, \ldots N$, called singular strata, which are smooth manifolds of dimension $j$. The stratum of dimension $(n-1)$ is empty. 

For each $\Sigma^j$ there exist a neighbourhood $\mathcal{U}_j$ of $\Sigma^j$, a retraction $\pi_j$, a radial function $\rho_j$: 
\begin{equation*}
\pi_j: \mathcal{U}_j \rightarrow \Sigma^j, \quad \rho_j: \mathcal{U}_j \rightarrow [0,1]
\end{equation*}
and a stratified space $Z_j$ such that $\pi_j$ is a cone bundle, whose fibre is a truncated cone over $Z_j$. The stratified space $Z_j$ is called the \emph{link} of the stratum $\Sigma^j$. 
\end{D} 

In the following we will refer to $\Omega$ and $\Sigma$ respectively as the regular and the singular set of $X$. We can reformulate the condition on the strata $\Sigma^j$ by saying that for each point $x$ in $\Sigma^j$ there exist a neighbourhood $\mathcal{W}_x$, a positive radius $\delta_x$ and a homeomorphism $\varphi_x$ between $\mathcal{W}_x$ and a product between an Euclidean ball $\mathbb{B}(\delta_x)$ and a truncated cone $C_{\delta_x}(Z_j)$ of size $\delta_x$ over the link $Z_j$. Moreover, $\varphi_x$ is a diffeomorphism between the regular part of $\mathcal{W}_x$ and $\mathbb{B}(\delta_x)\times C_{\delta_x}(Z_j^{\mbox{\tiny{reg}}}) \setminus  \mathbb{B}(\delta_x)\times \{0 \}$. In the rest of the paper we will treat $\varphi_x$ as an identification between $\mathcal{W}_x$ and the product $\mathbb{B}^j(\delta_x)\times C_{\delta_x}(Z_j)$. \newline

One can define an admissible metric on a stratified space: for a precise discussion we refer to section 3 of \cite{ALMP} and section 2.1 of \cite{ACM12}. For the purposes of this paper, the reader only needs to know that an admissible metric $g$ is a smooth Riemannian metric on the regular set $\Omega$ and near to the stratum $\Sigma^j$ it is a perturbation of the model metric $g_0=\xi_j+ dr^2+r^2k_j$, where $\xi_j$ is the Euclidean metric on $\R^j$ and $k_j$ is an iterated edge metric on the link $Z_j$. More precisely, if $x$ belongs to $\Sigma^j$ and $\mathcal{W}_x$, $\delta_x$ and $\varphi_x$ are defined as above, we have for any $r < \delta_x$:
\begin{equation*}
|\varphi^{*}_x g - g_0| \leq \Lambda r^{\alpha} \quad \mbox{ on } \mathbb{B}^j(r)\times C_r(Z_j). 
\end{equation*}
where $\Lambda$ is a positive constant and $\alpha >0$ does not depend on $j$. \newline 

In the following we will consider minimizing geodesics, that in this context are Lipschitz curves which minimize the distance between two points. We will need to use the uniqueness of a minimizing geodesic starting from a regular point with fixed speed: for this to be true, the metric must be $C^2$. We then assume that near each stratum $\Sigma^j$ the perturbation of the model metric $\varphi^*_x g -g_0$ has coefficients in $C^2$, and that the same is true for the metric $k_j$ on the links. \newline

On a stratified space it is possible to define the usual analytic tools of geometric analysis. We are mostly interested in the Sobolev space $W^{1,2}(X)$ and in the Laplacian operator. The first one is defined as the closure of the Lipschitz function on $X$ with respect to the usual norm:
\begin{equation*}
\norm{f}_{1,2}^2= \norm{f}_2^2+\norm{df}_2^2. 
\end{equation*}
Thanks to the assumption that the codimension one stratum does not exist, the smooth functions with compact support in the regular set $\Omega$ are dense in $W^{1,2}(X)$. A standard proof of this can be found in \cite{Thesis}. In \cite{ACM12}, the usual Sobolev embeddings which hold on compact Riemannian manifolds are proven in the setting of stratified spaces as well. In particular we have the following Sobolev inequality: there exist positive constants $A$ and $B$ such that for any $u$ in $W^{1,2}(X)$
\begin{equation*}
\norm{u}_{\p}^2\leq A\norm{du}_2^2+ B\norm{u}_2^2. 
\end{equation*}
The Laplacian operator $\Delta_g$ is the positive self-adjoint operator defined as the Friedrichs extension of the semi-bounded Dirichlet quadratic form $\mathcal{E}$:
\begin{equation*}
\mathcal{E}(u) =\int_X |du|^2 dv_g. 
\end{equation*}
defined for $u$ in $C^{\infty}_0(\Omega)$. 

\subsection*{Tangent cones and geodesic balls}

It will be useful to introduce another description for a neighbourhood of a singular point, which relays on the notion of tangent sphere. First, for each point in a stratified space, the pointed Gromov-Hausdorff limit of $(X, \lambda d, x)$ as $\lambda$ tends to infinity exists, it is unique and it is carries an exact cone metric. We refer to this limit as the tangent cone at $x$. When $x$ is a point in $\Omega$, the tangent cone is simply the Euclidean space $\R^n$. If $x$ belongs to $\Sigma^j$, the tangent cone is a cone over the following  stratified space:
\begin{equation*}
S_x=\left[0,\frac{\pi}{2} \right]\times \s^{j-1} \times Z_j
\end{equation*}
endowed with the metric $h_x=d\theta^2+\cos^2\theta g_{\s^{j-1}}+\sin^2\theta k_j$. We refer to $S_x$ as the $(j-1)$-fold spherical suspension of the link $Z_j$, and more often as the tangent sphere at $x$. 

In \cite{ACM14}, the authors showed that for each singular point $x$ there exist a sufficiently small radius $\eps_x$, a constant $\kappa$ and an open neighbourhood $\Omega_x$ of $x$ such that the geodesic ball centred at $x$ is included in $\Omega_x$, $\Omega_x$ is homeomorphic to the cone $C_{\kappa \eps_x}(S_x)$ and moreover in $B(x,\eps_x)$ the metric $g$ differs from the exact cone metric $dr^2+r^2 h_x$ for:
\begin{equation*}
|g-(dr^2+r^2h_x)|\leq \Lambda \eps_x^{\alpha}. 
\end{equation*}
For a more detailed description of the previous, we refer to section 2.2 in \cite{ACM14} and to the first chapter of \cite{Thesis}. 

\subsection*{Admissible stratified spaces}

Most of the results of this paper are stated for a class of stratified spaces, called \emph{admissible} and introduced in \cite{Mondello}. We recall their definition:

\begin{D}
A stratified space $(X^n,g)$ is an admissible stratified space if it satisfies the following two conditions:
\begin{itemize}
\item[(i)] The Ricci tensor on $\Omega$ is such that $Ric_g\geq(n-1)g$.
\item[(ii)] The stratum $\Sigma^{n-2}$ of codimension 2, if it is not empty, has angle $\alpha$ strictly less than $2\pi$. 
\end{itemize}
\end{D}

The second condition is to exclude the situation of a cone of angle $\alpha > 2\pi$, which in some sense would introduce negative curvature, thus an obstruction to extend results holding on smooth manifolds with a positive Ricci lower bound. For admissible stratified spaces, we proved in \cite{Mondello} a singular version of the Lichnerowicz theorem: 

\begin{theorem}[Singular Lichnerowicz]
Let $(X^n, g)$ be an admissible stratified space. Then the first non-zero eigenvalue $\lambda_1(X)$ of the Laplacian $\Delta_g$ is larger than or equal to the dimension $n$. 
\end{theorem}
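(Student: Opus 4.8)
The plan is to adapt the classical Lichnerowicz argument—which rests on the integrated Bochner, or Bochner--Reilly, formula—to the stratified setting by localizing the computation on the regular set $\Omega$ and controlling the error terms that appear near the singular set $\Sigma$. Since the Sobolev embeddings recalled above make the embedding $W^{1,2}(X)\hookrightarrow L^2(X)$ compact, the spectrum of $\Delta_g$ is discrete, and I may fix an eigenfunction $u$ for $\lambda_1(X)$, normalized by $\norm{u}_2=1$, so that $\Delta_g u=\lambda_1 u$. On $\Omega$ the metric $g$ is a genuine smooth Riemannian metric, $u$ is smooth by interior elliptic regularity, and the pointwise Bochner formula for the positive Laplacian holds:
\begin{equation*}
\tfrac12\,\Delta_g|du|^2=\langle du,d\Delta_g u\rangle-|\mathrm{Hess}\,u|^2-\mathrm{Ric}_g(du,du).
\end{equation*}
On a \emph{closed} manifold this would integrate to $\int(\Delta_g u)^2=\int|\mathrm{Hess}\,u|^2+\int\mathrm{Ric}_g(du,du)$; the whole difficulty is that $\Omega$ is not closed, so the integration must be justified in the presence of $\Sigma$.

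First I would assemble the analytic prerequisites. I need $u\in W^{2,2}(X)$, so that $\mathrm{Hess}\,u\in L^2(\Omega)$ and every integral below is finite, and I need $|du|$ to be controlled as one approaches $\Sigma$. This last point is exactly where admissibility enters. Near the codimension-two stratum $\Sigma^{n-2}$ the link is a circle of length $\alpha$, so in the transverse two-dimensional cone the nonconstant homogeneities of $u$ are the multiples of $2\pi/\alpha$; the condition $\alpha<2\pi$ forces the lowest one to exceed $1$, which keeps $|du|$ bounded (indeed vanishing) along $\Sigma^{n-2}$, whereas $\alpha>2\pi$ would allow $|du|$ to blow up. Near strata of codimension at least three the model geometry $\xi_j+dr^2+r^2k_j$ already yields the needed integrability of $|du|^2$.

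Next I would introduce a family of Lipschitz cutoffs $\cutoff$ built from the radial functions $\rho_j$, with $0\le\cutoff\le 1$, equal to $1$ outside an $\eps$-neighbourhood of $\Sigma$, vanishing in a smaller neighbourhood, and chosen so that $\int_\Omega|d\cutoff|^2\,dv_g\to 0$ as $\eps\to 0$. Such cutoffs exist precisely because every stratum has codimension at least two: for codimension at least three an ordinary radial cutoff suffices, while for the codimension-two stratum the borderline case is handled by a logarithmic cutoff whose Dirichlet energy decays like $1/\log(1/\eps)$. Multiplying the Bochner formula by $\cutoff^2$ and integrating—there are no genuine boundary terms since $\cutoff$ is compactly supported in $\Omega$—and using $\langle du,d\Delta_g u\rangle=\lambda_1|du|^2$, I obtain
\begin{equation*}
\int_\Omega\cutoff^2\left(|\mathrm{Hess}\,u|^2+\mathrm{Ric}_g(du,du)\right)dv_g=\lambda_1\int_\Omega\cutoff^2|du|^2\,dv_g-\int_\Omega\cutoff\,\langle d\cutoff,d|du|^2\rangle\,dv_g.
\end{equation*}
The error term is estimated by Cauchy--Schwarz using $|d|du|^2|\le 2|\mathrm{Hess}\,u|\,|du|$, which bounds it by $2\bigl(\int\cutoff^2|\mathrm{Hess}\,u|^2\bigr)^{1/2}\bigl(\int|d\cutoff|^2|du|^2\bigr)^{1/2}$; this tends to $0$ by the capacity decay of $\cutoff$ together with the boundedness of $|du|$ near $\Sigma$.

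Letting $\eps\to 0$ and using monotone convergence on the left, I reach the integrated identity $\int_\Omega|\mathrm{Hess}\,u|^2\,dv_g+\int_\Omega\mathrm{Ric}_g(du,du)\,dv_g=\lambda_1^2\norm{u}_2^2$. From here the conclusion is purely algebraic: the pointwise Cauchy--Schwarz inequality $|\mathrm{Hess}\,u|^2\ge\frac1n(\Delta_g u)^2=\frac{\lambda_1^2}{n}u^2$ together with the admissibility hypothesis $\mathrm{Ric}_g\ge(n-1)g$ on $\Omega$ gives $\lambda_1^2\ge\frac{\lambda_1^2}{n}+(n-1)\lambda_1$, whence $\lambda_1\ge n$. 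I expect the main obstacle to lie entirely in the analysis near $\Sigma$: establishing the $W^{2,2}$-regularity and the gradient bound for $u$, and verifying that the interplay between the capacity decay of $\cutoff$ and the transverse behaviour of $|du|$ at the codimension-two stratum—where the angle condition $\alpha<2\pi$ is indispensable—really forces the error term to vanish.
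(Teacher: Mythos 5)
Your skeleton --- Bochner on $\Omega$, cut-offs $\rho_{\varepsilon}$ of small Dirichlet energy, pass to the limit, then pointwise Cauchy--Schwarz plus $\mathrm{Ric}_g\ge (n-1)g$ --- is exactly the scheme of the paper's proof, and your final algebra is the same as the one that extracts $\lambda_1\ge n$ from the inequality \eqref{LichIn} (the paper even settles for a one-sided inequality rather than your exact integrated identity, which is all that is needed). The genuine gap is in the step you yourself flag as the main obstacle: the regularity of the eigenfunction near $\Sigma$. Your discussion of the codimension-two stratum via the angle condition $\alpha<2\pi$ is correct in spirit (the transverse homogeneities are multiples of $2\pi/\alpha>1$, so $|du|$ stays bounded there), but the claim that ``near strata of codimension at least three the model geometry $\xi_j+dr^2+r^2k_j$ already yields the needed integrability of $|du|^2$'' is unjustified and false as stated. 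Near a stratum of codimension $c=n-j\ge 3$ an eigenfunction behaves like $r^{\mu}\psi(\theta)$, where $\psi$ is a link eigenfunction and the exponent is the indicial root $\mu=-\tfrac{c-2}{2}+\sqrt{\bigl(\tfrac{c-2}{2}\bigr)^2+\lambda}$ with $\lambda$ an eigenvalue of the Laplacian on the link $Z_j$. Nothing in the metric model alone excludes $\mu\in(0,1)$, in which case $|du|\sim r^{\mu-1}$ blows up, $u\notin W^{2,2}$, and your error term $\int|d\rho_{\varepsilon}|^2|du|^2\,dv_g$ need not vanish even with logarithmic cut-offs.

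What rules this out is spectral information on the links, and this is precisely the structural idea your proposal is missing: the paper's proof is an \emph{induction on the dimension} of the stratified space. Links and tangent spheres of an admissible stratified space are themselves admissible (Remark \ref{rem}, via Lemma 1.1 of \cite{Mondello}), so the theorem in dimension $c-1$ gives $\lambda_1(Z_j)\ge c-1$, which forces $\mu\ge 1$ and hence $|du|$ bounded; equivalently, $\lambda_1(S_x)\ge n-1$ for every tangent sphere is exactly the hypothesis under which the gradient estimate \eqref{tubEst} and the $W^{2,2}$-regularity of eigenfunctions are established (Corollary 2.12 in \cite{Thesis}, and the Proposition quoted in Section 4 of this paper). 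With that inductive input, your cut-off computation closes as written; without it, the analytic prerequisites you assume at the outset ($u\in W^{2,2}(X)$, $|du|$ controlled near $\Sigma$) are not available. So keep your argument, but replace the unproved codimension-$\ge 3$ regularity claim by an induction on $\dim X$, applying the theorem on the links and tangent spheres to obtain the gradient bound and the $W^{2,2}$-regularity before running the Bochner--cut-off computation.
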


The proof of this theorem is by iteration on the dimension of the stratified space, and it consists in using Bochner-Lichnerowicz formula on the regular set and getting the suitable regularity on the eigenfunctions $\varphi$. Then by using the appropriate cut-off functions $\rho_{\eps}$, vanishing in a tubular neighbourhood of the singular set and being equal to one elsewhere, we obtain that for any eigenfunction $\varphi$ of the Laplacian associated to the eigenvalue $\lambda$ the following holds:
\begin{equation}
\label{LichIn}
\begin{split}
\left(1 -\frac{(n-1)}{\lambda} -\frac{1}{n}\right)\int_X \rho_{\eps}(\Delta_g \varphi)^2dv_g & \geq 
\left( 1 -\frac{(n-1)}{\lambda}\right)\int_X \rho_{\eps}(\Delta_g \varphi)^2dv_g \\
 & -\int_X \rho_{\eps}|\nabla d\varphi|^2 dv_g \geq 0.
\end{split} 
\end{equation}
Passing to the limit as $\eps$ goes to zero, this implies the desired inequality. Furthermore, the singular Lichnerowicz theorem has consequences on the regularity of the non-negative solutions to a Schrödinger equation of the form $\Delta_g u = F(u)$, where $F$ is locally Lipschitz. In particular, for an eigenfunction $\varphi$ we have that $\varphi$ belongs to $W^{2,2}(X)$ and its gradient is bounded on $X$ (see Corollary 2.12 in \cite{Thesis}). 

We observe that if there exists an eigenfunction $\varphi$ associated to the eigenvalue $n$, then the inequality $\eqref{LichIn}$ implies that its Hessian must satisfy $|\nabla d\varphi|^2=(\Delta_g\varphi)^2/n$ on the regular set. Therefore we are in the case of equality in the Cauchy-Schwarz inequality and we get that the Hessian of $\varphi$ is proportional to the metric $g$ in the regular set of $X$: 
\begin{equation}
\label{hessian}
\nabla d\varphi=-\varphi g \quad \mbox{ on } \Omega. 
\end{equation}
If the Hessian of a scalar function $\varphi$ satisfies an equation of the form $\nabla d\varphi= \rho g$ for some function $\rho$, then $\varphi$ is called in the literature a concircular scalar field. Its gradient $X=d\varphi$ is a conformal vector field, which means that the Lie derivative of the metric along $X$ is proportional to $g$. The existence of a concircular scalar field or of a conformal vector field on a compact, or complete, smooth manifold can lead to various consequences. For example, Y.~Tashiro in \cite{Tashiro} classified complete manifolds possessing a concircular scalar field. See also Sections 2 and 3 of \cite{Montiel} for a brief but complete presentation of some known results about the subject. 

In the setting of admissible stratified spaces as well, the equation $\eqref{hessian}$ is a key point in proving a rigidity result, as it will be clear in the proofs of Theorem \ref{MyersSing} and \ref{ObataSing}. 

\begin{rem}
\label{rem}
If $(X^n,g)$ is an admissible stratified space, then each of its links $Z_j$ and the tangent sphere at each point $S_x$ are admissible stratified spaces as well (see Lemma 1.1 in \cite{Mondello}). As a consequence of this and of the singular Lichnerowicz theorem, the first non-zero eigenvalue of the Laplacian on each tangent sphere is larger than $(n-1)$. 
\end{rem}

As we recalled in the introduction, the singular Lichnerowicz theorem allows one to prove that a Sobolev inequality with explicit constants holds on an admissible stratified space:

\begin{theorem}[Sobolev inequality]
\label{BakryS}
Let $X$ be an admissible stratified space of dimension $n$. Then for any $\displaystyle 1<p\leq 2n/(n-2)$ a Sobolev inequality of the following form holds:
\begin{equation}
\label{SobP}
V^{1-\frac{2}{p}}\norm{f}_p^2\leq \norm{f}_2^2+\frac{p-2}{n}\norm{df}_2^2.
\end{equation}
where $V$ is the volume of $X$ with respect to the metric $g$.
\end{theorem}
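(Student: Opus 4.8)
The plan is to recognize the stated inequality as the full family of sharp Sobolev inequalities that D.~Bakry and M.~Ledoux attach to a curvature-dimension condition $CD(n-1,n)$: after normalizing the measure to the probability measure $d\mu=dv_g/V$, the factor $V^{1-\frac{2}{p}}$ disappears and \eqref{SobP} becomes exactly $\norm{f}_p^2-\norm{f}_2^2\leq\frac{p-2}{n}\mathcal{E}(f)$, with the $L^p$ and $L^2$ norms now taken with respect to $\mu$. So I would first reduce to $f\in C^\infty_0(\Omega)$, which is legitimate because this space is dense in $W^{1,2}(X)$ and all three terms of \eqref{SobP} are continuous in the $W^{1,2}$-norm (the $L^p$-norm via the Sobolev embedding of \cite{ACM12} for $2<p\leq\p$, and via finiteness of the volume for $1<p\leq 2$). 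For the Sobolev range $2<p\leq\p$ one may moreover assume $f\geq 0$, since replacing $f$ by $|f|$ leaves $\norm{f}_p$ and $\norm{f}_2$ unchanged and does not increase $\norm{df}_2$; the range $1<p<2$ is handled with signed $f$ by the same computation, the case $p=2$ being the trivial identity at which the family degenerates. The whole argument is carried on the normalized space, and the $V^{1-\frac{2}{p}}$ factor is restored at the end by scaling.

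The engine is the pointwise Bochner--Lichnerowicz inequality, which on the regular set reads
\[
|\nabla df|^2+Ric_g(\nabla f,\nabla f)\geq\tfrac{1}{n}(\Delta_g f)^2+(n-1)|\nabla f|^2,
\]
that is the $\Gamma_2$-inequality $\Gamma_2(f)\geq (n-1)\Gamma(f)+\tfrac1n(\Delta_g f)^2$ on $\Omega$; this is precisely the ingredient already used to obtain \eqref{LichIn}, so it is available verbatim. With it in hand I would run the Bakry--Ledoux semigroup argument: letting $P_t=e^{-t\Delta_g}$ be the heat semigroup of the Friedrichs Laplacian, I introduce for a fixed $p$ the monotone functional along the flow whose time-derivative produces $\int\Gamma_2(P_sf)$ against a suitable weight, differentiate in $t$, bound the derivative from below by the $\Gamma_2$-inequality, and integrate from $0$ to $+\infty$. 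As $t\to+\infty$, $P_tf\to\int f\,d\mu$ by the spectral gap (guaranteed by $\lambda_1(X)\geq n>0$), and the sharp constant $\frac{p-2}{n}$ emerges from the value $\rho=n-1$ in $CD(\rho,n)$ exactly as in \cite{BakryLedoux}. Differentiating the family at $p=2$ recovers the logarithmic Sobolev inequality, and its second-order behaviour there is consistent with $\lambda_1(X)\geq n$, which serves as a useful sanity check.

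Since the abstract Bakry--Ledoux theorem cannot be invoked as a black box --- it is not known whether an admissible stratified space is a $CD(n-1,n)$ space in the sense of Bakry--Émery, a question raised in the introduction --- the real content is to adapt their computation to the singular setting exactly as the singular Lichnerowicz theorem was proved. Every integration by parts in the semigroup argument is performed on $\Omega$ after multiplying by the cut-off functions $\rho_{\eps}$ that vanish near $\Sigma$, and one then passes to the limit $\eps\to 0$. The boundary terms produced by $d\rho_{\eps}$ are controlled using the absence of a codimension-one stratum, the fact that $\Sigma$ is $dv_g$-negligible, and the regularity established as a consequence of Lichnerowicz (the relevant functions lie in $W^{2,2}(X)$ with bounded gradient, Corollary~2.12 of \cite{Thesis}), so that $\int_X\rho_{\eps}(\cdots)\to\int_X(\cdots)$ with vanishing error. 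I expect this limiting procedure to be the main obstacle: unlike \eqref{LichIn}, which needed only a single Bochner identity, the Bakry--Ledoux argument involves higher-order quantities and a time integration, so one must check that $P_t$ preserves the function classes on which the cut-off/limit scheme is valid --- in particular conservativeness $P_t1=1$, again a consequence of the missing codimension-one stratum, together with uniform-in-$t$ regularity of $P_tf$.
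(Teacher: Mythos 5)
Your normalization to the probability measure, the reduction to $C^\infty_0(\Omega)$ by density, and the cut-off scheme with $\rho_\eps$ on the regular set are all consistent with how the paper adapts smooth arguments to the singular setting. But the engine of your proof does not exist as described, and this is a genuine gap rather than a technical obstacle. The functional you invoke --- a monotone quantity along the linear heat flow $P_t=e^{-t\Delta_g}$ ``whose time-derivative produces $\int\Gamma_2(P_sf)$ against a suitable weight'' and which integrates from $0$ to $\infty$ to give \eqref{SobP} with the sharp constant $\frac{p-2}{n}$ --- is precisely the step you never construct, and for good reason: linear semigroup interpolation of this kind proves the Poincaré and logarithmic Sobolev inequalities under a curvature-dimension condition, but no such linear-flow argument is known to yield the sharp Sobolev family for $p>2$; the known proofs under $CD(n-1,n)$ are genuinely nonlinear. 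Your citation of \cite{BakryLedoux} for the emergence of the constant is moreover backwards: Theorem 4 of \cite{BakryLedoux} \emph{assumes} the Sobolev inequality \eqref{SobP} as hypothesis and derives the diameter bound and its rigidity --- it is exactly the direction used in the singular Myers theorem of this paper, not a proof of \eqref{SobP} from curvature-dimension.

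The route the paper actually takes (following D.~Bakry \cite{Bakry}, carried out in \cite{Mondello} and \cite{Thesis}) goes through \emph{extremal functions for subcritical exponents}: for $1<p<\frac{2n}{n-2}$ the embedding $W^{1,2}(X)\subset L^p(X)$ is compact, so an extremal for the best constant in \eqref{SobP} exists; it satisfies a subcritical Euler--Lagrange equation of Schr\"odinger type $\Delta_g u=F(u)$ with $F$ locally Lipschitz, to which the regularity theory quoted right after the singular Lichnerowicz theorem applies ($u\in W^{2,2}(X)$ with bounded gradient); the Bochner--Lichnerowicz ($\Gamma_2$) inequality is then integrated against the extremal (and suitable powers of it) with the cut-offs $\rho_\eps$, and letting $\eps\to0$ forces the best constant to satisfy \eqref{SobP}; the critical exponent $p=\frac{2n}{n-2}$ follows by letting $p$ increase to it. Note that statement (iii) of the singular Myers theorem in this very paper --- the equivalence of $\lambda_1=n$ with the existence of extremal functions for \eqref{SobP} --- already signals that extremals, not the heat flow, are the central object. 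If you want to salvage a flow-based proof, you would have to replace $P_t$ by a nonlinear (fast diffusion) flow in the spirit of Demange's work, which raises well-posedness questions on a stratified space far harder than the ``uniform-in-$t$ regularity of $P_tf$'' you flag; the extremal-function route avoids all of this and needs only the elliptic regularity the paper has already established.
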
 

A Sobolev inequality of the previous form was proven by S.~Ilias in \cite{Ilias} for compact smooth manifolds with Ricci tensor bounded by below by a positive constant, and by D.~Bakry in \cite{Bakry} for a much general setting. Our proof is inspired by the argument due to D.~Bakry. 

We now dispose of all the necessary tools to prove the upper bound on the diameter of an admissible stratified space. 

\section{A Myers singular theorem}

A classical result holding for smooth Riemannian manifolds is the Myers theorem: if $(M^n,g)$ is complete, connected, and its Ricci tensor is bounded by below by $(n-1)g$, then the diameter of $M$ is less or equal than $\pi$. In \cite{BakryLedoux}, the authors have proven that this kind of lower bound can be shown in a great generality, on a probability measure space with a Markov generator which satisfies a curvature-dimension condition. Moreover, the proof relies only on analytical tools, in particular on the existence of a Sobolev inequality of the form $\eqref{SobP}$ and on the choice of the appropriate test functions (see Section 2 in \cite{BakryLedoux} for the details). 
The previous theorem gives us the Sobolev inequality needed to apply D.~Bakry and M.~Ledoux's proof. As a consequence, the Myers theorem holds on admissible stratified spaces in the following sense:

\begin{theorem}[Singular Myers Theorem]
\label{MyersSing}
Let $(X,g)$ an admissible stratified space. Let us define its Lipschitz diameter as:
\begin{equation*}
\mbox{\emph{diam}}_L(X)=\sup\left\{ ||\tilde{f}||_{L^{\infty}(X \times X)}; f \in \mbox{\emph{Lip}}_1(X)\right\}
\end{equation*}
where $\tilde{f}(x,y)=f(x)-f(y)$ and  $\mbox{\emph{Lip}}_1(X)$ is the set of Lipschitz functions with Lipschitz constant less or equal than one. Then $\mbox{\emph{diam}}_L(X)$ is less or equal than $\pi$. 
\end{theorem}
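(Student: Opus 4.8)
The plan is to transplant, essentially verbatim, the purely analytic argument of Bakry and Ledoux (Theorem~4 of \cite{BakryLedoux}) into the stratified setting. Its only structural inputs are the sharp Sobolev inequality \eqref{SobP}, which Theorem~\ref{BakryS} provides for the entire range $1<p\leq 2n/(n-2)$, together with the fact that Lipschitz functions are legitimate test functions on $X$. I first record that, by definition, $\mbox{diam}_L(X)=\sup\{\operatorname{ess\,sup}f-\operatorname{ess\,inf}f\}$ where the supremum runs over $f\in\mbox{Lip}_1(X)$; since $X$ is compact of finite volume $V$, any such $f$ is bounded, lies in $W^{1,2}(X)$, and satisfies $|df|\leq 1$ almost everywhere on the regular set $\Omega$.

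The heart of the argument is the choice of test functions. Fixing $f\in\mbox{Lip}_1(X)$ and a smooth profile $\Phi$, I would apply \eqref{SobP} to $u=\Phi(f)$. Because $f$ is Lipschitz and $\Omega$ is a genuine smooth manifold of full measure, the chain rule $d(\Phi(f))=\Phi'(f)\,df$ holds $dv_g$-almost everywhere, so $|d(\Phi(f))|\leq|\Phi'(f)|$ and \eqref{SobP} reduces to
\begin{equation*}
V^{1-\frac{2}{p}}\norm{\Phi(f)}_p^2\leq\norm{\Phi(f)}_2^2+\frac{p-2}{n}\norm{\Phi'(f)}_2^2.
\end{equation*}
This is exactly the one-dimensional inequality analysed in \cite{BakryLedoux}, whose dimensional constant $(p-2)/n$ is sharp and whose extremal profiles live on an interval of length $\pi$ (the model being the weighted interval $[-\pi/2,\pi/2]$ associated with the curvature-dimension condition $CD(n-1,n)$). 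Comparing $f$ with this model, through the Bakry--Ledoux choice of extremal $\Phi$ and the sharpness of the constant, forces $\operatorname{ess\,sup}f-\operatorname{ess\,inf}f\leq\pi$; taking the supremum over $f$ gives $\mbox{diam}_L(X)\leq\pi$.

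I expect the main obstacle to be not a new geometric idea but the verification that this computation may be run on $X$ despite the singular set $\Sigma$. Two facts already at our disposal make this routine: the density of $C_0^{\infty}(\Omega)$ in $W^{1,2}(X)$, which holds precisely because $X$ has no stratum of codimension one, and the compactness of $X$, which renders all the norms above finite and allows one to approximate $\Phi(f)$ by smooth functions supported away from $\Sigma$. Using the cut-off functions $\rho_{\eps}$ that vanish in a tubular neighbourhood of $\Sigma$ and already appear in our proofs, every integration by parts and limiting step of the Bakry--Ledoux argument is justified, since $\Sigma$ has $dv_g$-measure zero. In this way the absence of a codimension-one stratum and the Sobolev inequality \eqref{SobP} are again the two decisive ingredients, exactly as anticipated.
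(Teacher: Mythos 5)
Your proposal is correct and follows essentially the same route as the paper: the paper likewise proves Theorem \ref{MyersSing} by invoking the purely analytic argument of Bakry--Ledoux (Theorem 4 in \cite{BakryLedoux}), observing that its only inputs are the sharp Sobolev inequality \eqref{SobP} supplied by Theorem \ref{BakryS} and the admissibility of Lipschitz test functions on $X$. Your additional remarks on the chain rule on $\Omega$, the density of $C^{\infty}_0(\Omega)$ in $W^{1,2}(X)$, and the cut-off functions only make explicit the verifications the paper leaves implicit.
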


Observe that on a smooth Riemannian manifold, what we called Lipschitz diameter coincides with the usual diameter associated to the Riemannian metric. We remark that it is possible to prove the following lemma:

\begin{lemma}
\label{curve}
Let $(X,g)$ be a stratified space of dimension $n$ and let \mbox{$\gamma: [0,1] \rightarrow X$} be a Lipschitz curve in $X$. Let $L_g(\gamma)$ denote its length. For any $\eps>0$ there exists a curve $\gamma_{\eps}$ such that $\gamma_{\eps}((0,1))$ is contained in the regular set $\Omega$ and $L_g(\gamma_{\eps}) \leq (1+\eps)L_g(\gamma)$. 
\end{lemma}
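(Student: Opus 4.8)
The plan is to induct on the dimension $n$ and to reroute $\gamma$ only where it comes close to the singular set, exploiting that every stratum $\Sigma^j$ has codimension $n-j\ge 2$. The case $n=1$ is vacuous, since a one-dimensional stratified space has empty singular set (the codimension one stratum is excluded). For the inductive step I assume the lemma for every stratified space of dimension $<n$, hence in particular for each link $Z_j$, whose dimension $n-j-1$ is strictly smaller. After reparametrising $\gamma$ proportionally to arc length and using that $\Sigma$ is compact, a finite cover by charts $\mathcal W_x\cong \mathbb B^j(\delta)\times C_\delta(Z_j)$ reduces the problem to modifying $\gamma$ inside one such chart, where $g$ is a controlled perturbation $|g-g_0|\le\Lambda r^\alpha$ of the product cone model $g_0=\xi_j+dr^2+r^2k_j$. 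The idea is to replace each maximal subarc on which $\gamma$ dips below a small radius $\eta$ by a $g_0$-geodesic between its endpoints, perturbed so as to avoid $\Sigma$.

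The gain comes from the fact that a geodesic realises distance, so the replacement of a subarc $\gamma|_{[a_i,b_i]}$ of length $\ell_i$ by a minimizing $g_0$-geodesic between $\gamma(a_i)$ and $\gamma(b_i)$ does not increase length at all in the model metric. The only singular point such a geodesic can meet in the fibre direction is the cone vertex, and this happens precisely when the two link endpoints are at $Z_j$-distance $\ge\pi$; in that case the geodesic passes through $r=0$, and I push it off the vertex by descending radially to a tiny radius $\mu_i$, traversing the fibre there, and climbing back, carrying the base displacement simultaneously. In the model metric this costs only an additive $O(\mu_i\,\mathrm{diam}(Z_j))$ beyond the geodesic length. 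The crucial point is that a subarc forcing a through-vertex geodesic must reach $r=0$ from radius $\eta$ and return, hence has length at least $2\eta$; since these subarcs occupy disjoint parameter intervals of total length at most $L_g(\gamma)$, there are at most $L_g(\gamma)/2\eta$ of them. Choosing the finitely many $\mu_i$ small enough makes the total additive excess less than $\tfrac{\eps}{3}L_g(\gamma)$.

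It remains to clear the replaced arcs of the remaining singularities, namely the points whose link coordinate lies in the singular set of $Z_j$: the fibre components of the geodesics (and of the radius-$\mu_i$ detours) are curves in $Z_j$, and by the inductive hypothesis each may be perturbed into $Z_j^{\mathrm{reg}}$ at a multiplicative cost $(1+\eps')$. Since this cost is relative it sums to at most $\eps'\,L_g(\gamma)$ over all subarcs, even if there are infinitely many, so one takes $\eps'=\eps/3$. Finally the metric perturbation contributes a factor $1+O(\eta^\alpha)$ on all the lengths involved, which is absorbed into the last third of $\eps$ by first shrinking $\delta$ and then $\eta$. Summing the three contributions over the finite cover produces a curve $\gamma_\eps$ with interior in $\Omega$ and $L_g(\gamma_\eps)\le(1+\eps)L_g(\gamma)$.

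The step I expect to be most delicate is the bookkeeping at the junctions rather than any isolated estimate. One must choose the threshold radius $\eta$, and the endpoints of the replaced subarcs, to fall at regular points of $X$ — possible because $\Omega$ is dense and $\gamma$ is continuous, with the two global endpoints $\gamma(0),\gamma(1)$ handled by the open-interval formulation of the conclusion — and one must organise the nested perturbations so that the vertex detour, the inductive link perturbation, and the metric error are applied in a compatible order. Controlling the additive vertex cost over possibly infinitely many excursions, by isolating the finitely many that actually cross $r=0$, is the quantitative heart of the argument, and it is exactly the codimension being at least two (so that the link $Z_j$ is non-empty of positive dimension) that leaves room to slide the fibre coordinate off the vertex at cost $O(\mu_i)$.
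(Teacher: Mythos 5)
The paper never actually proves this lemma --- it is stated as a remark (``we remark that it is possible to prove''), with the argument left to the references --- so your attempt can only be judged on its own merits. Your overall strategy is the natural one and isolates the reasons the lemma is true: induction on the dimension so that the hypothesis applies to the links, reduction to the model chart $\mathbb{B}^j(\delta)\times C_\delta(Z_j)$ where $|g-g_0|\leq\Lambda r^{\alpha}$, geodesic replacement of the subarcs that approach the stratum, an additive detour around the cone vertex whose total cost is controlled by counting the excursions that can force it, and a multiplicative (hence summable over infinitely many subarcs) inductive perturbation of the link components.

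There is, however, one step that fails as written: the treatment of the junctions. You keep $\gamma$ unchanged where $r\geq\eta$ and claim the endpoints of the replaced subarcs can be taken at regular points ``because $\Omega$ is dense and $\gamma$ is continuous''. Density of $\Omega$ is irrelevant here: the junctions must be points of $\gamma$ itself, lying on the level set $\{r=\eta\}$, and it can happen that every such crossing point is singular, for every $\eta$. Indeed, in the chart around $\Sigma^j$ the points with $r>0$ whose link coordinate is singular in $Z_j$ constitute precisely the other strata $\Sigma^{j'}$ adjacent to $\Sigma^j$; if $\gamma$ travels inside such a $\Sigma^{j'}$, its entire trace in the chart --- junctions and kept subarcs included --- is singular, and your construction, which perturbs the link coordinate only along the \emph{replaced} arcs, outputs a curve still meeting $\Sigma$ at interior times. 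The repair is not a density argument but a global organisation: treat $\Sigma^{j}$ before $\Sigma^{j'}$ whenever $\Sigma^{j}\subset\overline{\Sigma^{j'}}$, and in each chart apply the vertex detour \emph{and} the inductive link perturbation to the whole portion of the current curve lying in that chart, so that junctions or kept arcs sitting in shallower strata are cleaned at a later stage; finiteness of the number of strata makes this terminate. Two smaller corrections. First, a subarc forcing a through-vertex replacement need not ``reach $r=0$'': its endpoints may simply be at link distance $\geq\pi$; the counting survives because such endpoints are then at model distance $r_1+r_2=2\eta$, so the subarc still has length at least $2\eta\left(1-O(\eta^{\alpha})\right)$. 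Second, lifting the inductive bound $L(c_{\eps'})\leq(1+\eps')L(c)$ from $Z_j$ back to the chart is not automatic, because the link component is coupled to the fixed radial and base components; one needs, for instance, to run $c_{\eps'}$ at constant speed as a function of the arc length of $c$ and use that $\sqrt{\dot r^2+|\dot y|^2+\lambda^2 r^2|\dot z|^2}\leq\lambda\sqrt{\dot r^2+|\dot y|^2+r^2|\dot z|^2}$ for $\lambda\geq 1$, a point your phrase ``since this cost is relative it sums'' silently assumes.
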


This implies two facts: first, a function $u$ in $C^1(\Omega)\cap C^0(X)$ whose gradient is bounded in $L^{\infty}(X)$ by a constant $c$ is a Lipschitz function on the whole of $X$, with Lipschitz constant less or equal than $c$; moreover, the Lipschitz diameter coincides with the diameter associated to the metric $g$, and we can avoid any distinction between the two. 

We are going to show that an admissible stratified space has diameter equal to $\pi$ if and only if the first non-zero eigenvalue of the Laplacian is equal to the dimension of the space. Thanks to Theorem 4 in \cite{BakryLedoux} this is in turn equivalent to the existence of extremal functions for the Sobolev inequality $\eqref{SobP}$ which only depend on the distance from a point. 

\begin{theorem}
Let $(X,g)$ be an admissible stratified space of dimension $n$. Then the following statements are equivalent:
\begin{itemize}
\item[(i)] The first non-zero eigenvalue of the Laplacian $\Delta_g$ is equal to $n$.
\item[(ii)] The diameter of $X$ is equal to $\pi$.
\item[(iii)] There exist extremal functions for the Sobolev inequality. 
\end{itemize} 
\end{theorem}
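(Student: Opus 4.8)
The plan is to establish the cyclic implications, noting that two of the three links are already contained in the work of Bakry and Ledoux. Indeed, Theorem 4 of \cite{BakryLedoux}, which applies as soon as the Sobolev inequality of Theorem \ref{BakryS} is available, shows at once that (ii) $\Rightarrow$ (i) --- if $\mathrm{diam}(X)=\pi$ then $\lambda_1(X)=n$, with an explicit eigenfunction depending only on the distance to a point --- and the equivalence (ii) $\Leftrightarrow$ (iii), since the extremal functions for \eqref{SobP} are exactly the distance-type functions occurring when the diameter is maximal. Hence the only implication requiring a genuinely new argument is (i) $\Rightarrow$ (ii), and the whole proof reduces to showing that $\lambda_1(X)=n$ forces $\mathrm{diam}(X)=\pi$.

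First I would fix an eigenfunction $\varphi$ with $\Delta_g\varphi=n\varphi$. By the equality case in the singular Lichnerowicz theorem its Hessian satisfies \eqref{hessian}, namely $\nabla d\varphi=-\varphi g$ on $\Omega$, and I may use the regularity recalled above: $\varphi$ is smooth on $\Omega$, continuous on $X$, and $\nabla\varphi$ is bounded. A direct computation from \eqref{hessian} gives $\nabla(|\nabla\varphi|^2+\varphi^2)=0$ on $\Omega$, so that $|\nabla\varphi|^2+\varphi^2\equiv C$ is constant on the connected regular set and, by continuity, $\varphi^2\le C$ on all of $X$. Next I would introduce $u=\arccos(\varphi/\sqrt C)$, which is continuous on $X$, takes values in $[0,\pi]$, and satisfies $|\nabla u|=|\nabla\varphi|/\sqrt{C-\varphi^2}=1$ on $\Omega$ away from the critical points of $\varphi$. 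Since $u\in C^0(X)$ has gradient bounded by $1$ on the regular set, the consequence of Lemma \ref{curve} recalled after its statement shows that $u$ is a $1$-Lipschitz function on $(X,\dist)$.

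The heart of the matter is then to prove that $\varphi$ attains the extreme values $\pm\sqrt C$, that is $\max_X\varphi=\sqrt C$ and $\min_X\varphi=-\sqrt C$. Granting this, if $p$ and $q$ realise the maximum and the minimum of $\varphi$ then $u(p)=0$ and $u(q)=\pi$, and the $1$-Lipschitz bound yields $\dist(p,q)\ge|u(p)-u(q)|=\pi$; together with the upper bound $\mathrm{diam}(X)\le\pi$ from the singular Myers Theorem \ref{MyersSing}, this gives $\mathrm{diam}(X)=\pi$ and finishes (i) $\Rightarrow$ (ii). To produce the value $\sqrt C$ I would start from a regular non-critical point $x_0$ (which exists, since $\varphi$ is not constant) and follow the unit-speed geodesic $\gamma$ issued from $x_0$ in the direction $\nabla\varphi(x_0)/|\nabla\varphi(x_0)|$: as long as $\gamma$ stays in $\Omega$, the function $f=\varphi\circ\gamma$ solves $f''=-f$ and $(f')^2+f^2$ is conserved, equal to $C$ by the choice of the initial direction, so that $f(s)=\sqrt C\cos(s-s_0)$ necessarily reaches $\sqrt C$ at some $s^\ast\in(0,\pi)$. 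Applying the same argument to the eigenfunction $-\varphi$, which satisfies \eqref{hessian} with the same constant $C$, then gives $\min_X\varphi=-\sqrt C$.

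I expect the main obstacle to be precisely this last point: controlling the geodesic $\gamma$ so that the ODE argument is not interrupted by the singular set before $\varphi$ attains $\pm\sqrt C$. This is where the study of minimizing geodesics and tangent cones in an admissible stratified space enters: using that $\Sigma$ has codimension at least two and the local cone structure of a neighbourhood of a singular point, one must ensure that the relevant geodesic --- or a suitable minimizing geodesic joining a maximum point to a minimum point of $\varphi$ --- can be taken to lie in $\Omega$, and that the continuous extension of $\varphi$ across any unavoidable singular point is compatible with the cosine profile. Once the extreme values $\pm\sqrt C$ are secured, the remaining steps are the elementary computations above combined with Lemma \ref{curve}, and the equivalence of the three conditions follows.
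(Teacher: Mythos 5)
Your overall architecture matches the paper's: the implications involving (iii), and (ii) $\Rightarrow$ (i), are delegated to Theorem 4 of \cite{BakryLedoux} via the Sobolev inequality \eqref{SobP}, the burden falls on (i) $\Rightarrow$ (ii), and your setup there --- the Hessian equation \eqref{hessian}, the conservation of $|\nabla\varphi|^2+\varphi^2$, the $1$-Lipschitz function $\arccos(\varphi/\sqrt{C})$ combined with Lemma \ref{curve} and the upper bound of Theorem \ref{MyersSing} --- is exactly the paper's. But the crucial step, that $\varphi$ attains $\pm\sqrt{C}$, is left open, and the geodesic ODE argument you sketch does not survive the singular set; the ``obstacle'' you flag at the end is not a technicality but the entire difficulty, and the remedies you gesture at do not repair it. The flow line of $\nabla\varphi/|\nabla\varphi|$ is a geodesic only while it remains in $\Omega$; if it reaches $\Sigma$ at a time $t_0$ before the cosine profile peaks, then $\varphi(\gamma(t_0))<\sqrt{C}$ and there is no continuation, since $\nabla\varphi$ is undefined on $\Sigma$. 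Perturbing via Lemma \ref{curve} does not help: along a curve that is not a geodesic one has $(\varphi\circ\gamma)''=-\varphi\circ\gamma+d\varphi(\nabla_{\dot\gamma}\dot\gamma)$, and the acceleration term is uncontrolled, so the ODE $f''=-f$ is lost. More fundamentally, first-order criticality is unavailable at singular extrema: on a cone the distance to the vertex has gradient of norm one on the regular set yet attains its minimum at the vertex, so the smooth-case mechanism your ODE dynamically encodes (``at the maximum point $\nabla\varphi=0$, hence $\varphi^2=C$ there'') genuinely fails when the extremum sits on $\Sigma$ --- which is a priori possible here.

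The paper closes this gap by a purely variational barrier argument that never follows trajectories. Normalizing $C=1$ and assuming by contradiction $\max\varphi=M<1$, it takes $u_\alpha(t)=t-t^\alpha$ with $\alpha$ large enough that $t_0(\alpha)=\sqrt{\alpha(\alpha+n)^{-1}}>M$, which yields $\Delta_g(u_\alpha\circ\varphi)>n\,(u_\alpha\circ\varphi)$ on $\Omega\cap\mathcal{U}_+$, where $\mathcal{U}_+=\{\varphi>0\}$. An integration by parts with the substitution $v=\psi/(u_\alpha\circ\varphi+\eps)$, followed by $\eps\to 0$, shows that the first Dirichlet eigenvalue of $\mathcal{U}_+$ is at least $n$; since $\varphi\in W^{1,2}_0(\mathcal{U}_+)$ realizes the eigenvalue $n$, running the same computation with $\psi=\varphi$ forces $u_\alpha\circ\varphi$ to be a constant multiple of $\varphi$, contradicting the strict differential inequality. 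To make your proposal a proof you would need to supply this mechanism (or an equivalent one that avoids tracking geodesics through $\Sigma$); as written, it establishes the easy implications and leaves the decisive one unproved.
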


\begin{proof}
If the diameter of $X$ is equal to $\pi$, then its Lipschitz diameter is equal to $\pi$, and then Theorem 4 in \cite{BakryLedoux} implies both the existence of functions attaining the equality in Sobolev inequality and of an eigenfunction associated to the eigenvalue $n$. In particular, if $P$ is a point in $X$ with and antipodal point $N$, $\mbox{d}_g(P,N)=\pi$, then the function $\varphi(x)=\cos(\mbox{d}_g(P,x))$ is such that $\Delta_g \varphi= n \varphi$. 

As a consequence, we have to prove that if the first non-zero eigenvalue of the Laplacian is equal to the dimension of the space, then its diameter is equal to $\pi$. If we find a Lipschitz function $f$  which takes values in an interval of length $\pi$ and whose Lipschitz constant is smaller or equal than one, then we have that $\mbox{diam}_L(X)=\pi$, and thanks to the previous lemma we get the desired value for the diameter.   

Consider $\varphi$ an eigenfunction associated to the eigenvalue $n$: as we recalled above, its gradient belongs to $W^{1,2}(X)$ and it is bounded. Moreover, its Hessian is proportional to the metric $g$ on the regular set $\Omega$, since $\varphi$ satisfies the equation $\eqref{hessian}$. As a consequence, we can show that the quantity $|\nabla \varphi|^2+\varphi^2$ is a constant on the regular set $\Omega$. In fact we have:
\begin{equation*}
d(|\nabla \varphi|^2+\varphi^2)=2\varphi d\varphi + 2\nabla d\varphi( \cdot, \nabla \varphi)=2\varphi d\varphi - 2\varphi d\varphi=0.
\end{equation*}
Then, up to multiplying by a constant, we can assume without loss of generality that:
\begin{equation}
\label{UguaglianzaRisolutiva}
|\nabla \varphi|^2+\varphi^2=1 \quad \mbox{ on } \Omega.
\end{equation}
This equality tells us that $\varphi$ takes values between $-1$ and $1$. Let us consider the function $f$ defined as follows:
\begin{equation*}
f=\arcsin(\varphi).
\end{equation*}
Its gradient is bounded on the regular set $\Omega$, because the gradient of $\varphi$ belongs to $L^{\infty}(X)$, and then $f$ belongs to $\mbox{Lip}(X)$ as well. Moreover, by definition $\nabla f$ has norm equal to one at each regular point: thanks to Lemma $\ref{curve}$ this implies that the Lipschitz constant of $f$ on the whole $X$ is less or equal than one. In order to conclude, we need to show that the image of $X$ by $f$ is equal to $[-\pi/2, \pi/2]$. This is clearly equivalent to proving that $\varphi$ has the closed interval $[-1,1]$ as image. 

Let us define $\mathcal{U}_+$ as the set on which $\varphi$ is strictly positive. Observe that $\Omega \cap \mathcal{U}_+$ is not empty, since $\varphi$ changes sign on $X$, and $\Omega$ is dense in $X$. Moreover $\Omega \cap \mathcal{U}_+$ is dense in $\mathcal{U}_+$, since $\Omega$ is dense and $\mathcal{U}_+$ is an open set in $X$.

Consider and the following problem with Dirichlet condition at the boundary: 
\begin{equation*}
\begin{cases}
\Delta_g f=\lambda f \mbox{ in } \mathcal{U}_+\\
f=0 \mbox{ on } \partial\mathcal{U}_+. 
\end{cases}
\end{equation*}
This problem has a variational formulation: we can define the first non-zero Dirichlet eigenvalue on $\mathcal{U}_+$ as the infimum of the Dirichlet energy on functions in $W^{1,2}_0(\mathcal{U}_+)$, that is:
\begin{equation*}
\lambda_1(\mathcal{U}_+)= \inf\left\{\mathcal{E}(\psi)= \frac{\norm{d\psi}^2_2}{\norm{\psi}^2_2}, \psi\in W^{1,2}_0(\mathcal{U}_+) \right\}
\end{equation*}

Assume by contradiction that the maximum of $\varphi$ is equal to $M$, strictly smaller than $1$. We state that this implies the existence of a function $u: [0, M] \rightarrow \R_{+}$ such that $u(0)=0$ and 
\begin{equation*}
\Delta_g (u\circ \varphi) = n\varphi u'(\varphi) -(1-\varphi^2)u''(\varphi) > n(u \circ \varphi), \mbox{ on  } \Omega \cap \mathcal{U}_+. 
\end{equation*} 
This means that we can find a function $u$ which vanishes at 0, is positive on $(0,M]$ and satisfies the following differential inequality on $(0,M]$:
\begin{equation}
\label{wish}
-u''(t)(1-t^2)+ ntu'(t)>nu(t).
\end{equation}
Let $\alpha >1$, to be chosen later, and consider $u_{\alpha}(t)=t-t^{\alpha}$. By replacing in the differential inequality, we reformulate $\eqref{wish}$ in the following way:
\begin{align*}
\alpha(\alpha-1)t^{\alpha -2}(1-t^2)+ nt(1-\alpha t^{\alpha-1}) &> n(t-t^{\alpha}). \\
\alpha (\alpha-1)t^{\alpha -2} -\alpha(\alpha-1)t^{\alpha}-n\alpha t^{\alpha} +n t^{\alpha}&>0 \\
\alpha (\alpha -1) t^{\alpha-2} -(\alpha-1)t^{\alpha}(\alpha+n) &>0. 
\end{align*}
Now by multiplying by $(\alpha-1)t^{2-\alpha}>0$ we get:
\begin{equation*}
\alpha -t^2(\alpha+n)>0. 
\end{equation*}
Therefore the question becomes to find an $\alpha>1$ such that the previous inequality is satisfied. 
The second degree polynomial appearing in the left-hand side of the previous inequality has a solution in $[0,1]$ at $t_0(\alpha)=\sqrt{\alpha(\alpha+n)^{-1}}$, and it is positive between $0$ and $t_0(\alpha)$. Since this last quantity tends to one as $\alpha$ goes to infinity, and since $M$ is strictly smaller than one, we can choose $\alpha$ large enough so that $t_0(\alpha)$ is strictly larger than $M$. For such $\alpha$ the function $u_{\alpha}$ satisfies the desired differential inequality, it is positive in $(0, M]$ and vanishes at $0$. From now on we denote $u_{\alpha}$ simply by $u$, and $u\circ \varphi$ by $\phi$. 

Let $\eps$ be a positive real number and define $u_{\eps}=u+\eps$: then $u_{\eps}$ is strictly positive and, if we consider $\phi_{\eps}=u_{\eps}\circ \varphi$, the Laplacian of $\phi_{\eps}$ satisfies $\Delta_g \phi_{\eps} > n \phi$ on $\Omega \cap \mathcal{U}_+$.

For any positive function $\psi$ belonging to $W^{1,2}_0(\mathcal{U}_+)$ we can define $v= \psi/{\phi_{\eps}}$, which still belongs to $W^{1,2}_0(\mathcal{U}_+)$. By integration by parts and using that $\Omega \cap \mathcal{U}_+$ is dense in $\mathcal{U}_+$ we obtain:
\begin{align*}
\int_{\mathcal{U}_+}|d\psi|^2dv_g 
&=\int_{\mathcal{U}_+}|d(v\phi_{\eps})|^2dv_g = \int_{\mathcal{U}_+}(v^2|d\phi_{\eps}|^2+2v\phi_{\eps}(dv, d\phi_{\eps})_g+\phi_{\eps}^2|dv|^2)dv_g \\
& \geq \int_{\mathcal{U}_+}(v^2|d\phi_{\eps}|^2+2v\phi_{\eps}(dv, d\phi_{\eps})_g)dv_g = \int_{\mathcal{U}_+} (d(v^2\phi_{\eps}), d\phi_{\eps})_g dv_g= \\
& = \int_{\mathcal{U}_+}\phi_{\eps} v^2 \Delta_g \phi_{\eps} dv_g = \int_{\mathcal{U}_+ \cap \Omega} \phi_{\eps} v^2 \Delta_g \phi_{\eps} dv_g.
\end{align*}
Now, by using that $\Delta_g \phi_{\eps}>n\phi$ on $\mathcal{U}_+ \cap \Omega$ in the last integral, and since by definition $v=\psi/\phi_{\eps}$ we get:
\begin{equation*}
\int_{\mathcal{U}_+}|d\psi|^2dv_g > n \int_{\mathcal{U}_+ \cap \Omega} \phi_{\eps} \phi v^2 dv_g 
= n \int_{\mathcal{U}_+ \cap \Omega}\psi^2 \frac{\phi}{\phi_{\eps}}dv_g = n \int_{\mathcal{U}_+ } \psi^2 \frac{\phi}{\phi_{\eps}}dv_g. 
\end{equation*}
Now observe that $\phi/ \phi_{\eps}$ is smaller than one, it converges to one almost everywhere when $\eps$ goes to zero, and when we pass to the limit, by the dominated convergence theorem, we get:
\begin{equation*}
\int_{\mathcal{U}_+} |d\psi|^2dv_g \geq n \int_{\mathcal{U}_+}\psi^2 dv_g.
\end{equation*}
This shows that $\lambda_1(\mathcal{U}_+)$ is larger than or equal to $n$. 

The eigenfunction $\varphi$ associated to $n$ is a positive function on $\mathcal{U}_+$ belonging to $W^{1,2}_0(\mathcal{U}_+)$, and therefore $\lambda_1(\mathcal{U}_+)$ is equal to $n$. Moreover, we can apply the same calculations as above with $\psi=\varphi$. We can write $\varphi$ as $v\phi$, where $v$ is strictly positive on $\mathcal{U}_+$ and it is defined by $v= (1-\varphi^{\alpha -1})^{-1}$, since by definition $\phi = \varphi-\varphi^{\alpha}$. We can easily deduce that $v$ must be a positive constant. In fact we have:
\begin{equation*}
\begin{split}
n \int_{\mathcal{U}_+} \varphi^2 dv_g &= \int_{\mathcal{U}_+} |d\varphi|^2dv_g =\int_{\mathcal{U}_+} (\phi^2|dv|^2 + \phi v^2 \Delta_g \phi)dv_g \\
&>\int_{\mathcal{U}_+} \phi^2|dv|^2dv_g+ n\int_{\mathcal{U}_+} \varphi^2 dv_g. 
\end{split}
\end{equation*}
This means that $dv=0$, $v$ must be equal to a constant $c$ and $\phi$ is a multiple of $\varphi$, therefore an eigenfunction relative to $n$. This is a contradiction, since we have shown that $\Delta_g \phi$ is strictly larger than $n \phi$ on $\Omega \cap \mathcal{U}_+$. Therefore, the maximum of $\phi$ on $\mathcal{U}_+$ must be equal to one. 

Remark that, in particular, we have proven that the Dirichlet problem on $\mathcal{U}_+$ has a unique positive solution up to multiplication factors. 

Analogously, the minimum of $\varphi$ is equal to $-1$: therefore the image of $X$ via $\varphi$ is $[-1,1],$, and via $f$ is $[-\pi/2, \pi/2]$. Thanks to Theorem $\ref{MyersSing}$ we know that the Lipschitz diameter is less or equal than $\pi$, and then we get the equality, as we wished. 
\end{proof}

\section{Obata singular theorem}

In this section we are going to prove a rigidity result for an admissible stratified space such that the first non-zero eigenvalue of the Laplacian is equal to the dimension. This theorem recovers the one proved by M.~Obata \cite{Obata} for compact smooth manifolds $(M^n,g)$ with Ricci tensor bounded by below by $(n-1)g$. For an alternative discussion of the proof in the case of Riemannian manifolds we refer to Theorem D.I.6 in \cite{BGM}.  

\begin{theorem}[Singular Obata theorem]
\label{ObataSing}
Let $(X,g)$ an admissible stratified space of dimension $n$. The first eigenvalue of the Laplacian $\Delta_g$ is equal to $n$ if and only if there exists an admissible stratified space $(\hat{X},\hat{g})$ of dimension $(n-1)$ such that $(X,g)$ is isometric to the spherical suspension of $\hat{X}$:
\begin{equation}
S(\hat{X})=\left[-\frac{\pi}{2},\frac{\pi}{2} \right]\times \hat{X}.
\end{equation}
Endowed with the metric $dt^2+\cos^2(t)\hat{g}$. 
\end{theorem}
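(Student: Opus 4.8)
The plan is to treat the two implications separately. The reverse implication is a direct computation: if $(X,g)$ is isometric to $S(\hat X)=\bigl([-\pi/2,\pi/2]\times\hat X,\,dt^2+\cos^2(t)\hat g\bigr)$, then the function $\varphi=\sin(t)$, depending only on the suspension parameter, satisfies $\Delta_g\varphi=n\varphi$ on the regular set by the standard warped-product formula $\Delta_g u(t)=-u''(t)+(n-1)\tan(t)\,u'(t)$. Since $\varphi$ is non-constant and Lipschitz, it is an eigenfunction for the eigenvalue $n$, so $\lambda_1(X)\le n$; combined with the singular Lichnerowicz theorem $\lambda_1(X)\ge n$ (applied to the admissible space $X$), this gives $\lambda_1(X)=n$.

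For the forward implication, assume $\lambda_1(X)=n$ and let $\varphi$ be an associated eigenfunction. I would rely on three facts established earlier: the Hessian equation $\eqref{hessian}$, the normalization $\eqref{UguaglianzaRisolutiva}$, and the conclusion of the preceding theorem that $\mathrm{diam}(X)=\pi$ and that one may take $\varphi=\cos(d_g(P,\cdot))$ for a maximum point $P$ (where $\varphi=1$) and an antipodal minimum $N$ (where $\varphi=-1$). The computational engine is that along any unit-speed geodesic $\sigma$ contained in $\Omega$ the function $\psi=\varphi\circ\sigma$ satisfies $\psi''=\nabla d\varphi(\dot\sigma,\dot\sigma)=-\psi$, hence $\psi(s)=\psi(0)\cos(s)+\psi'(0)\sin(s)$. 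Two consequences follow: by $\eqref{UguaglianzaRisolutiva}$ the critical set of $\varphi$ is contained in $\{\varphi=\pm 1\}$; and a geodesic issuing from $P$, where $\psi(0)=1$ and $\psi'(0)=0$ by $\eqref{UguaglianzaRisolutiva}$, satisfies $\varphi(\sigma(s))=\cos(s)$ and reaches an antipode at arclength exactly $\pi$, so it is minimizing on all of $[0,\pi]$. Setting $r=d_g(P,\cdot)=\arccos(\varphi)$, I would take as equator the level set $\hat X:=\{\varphi=0\}=\{r=\pi/2\}$.

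On the regular locus $0$ is a regular value of $\varphi$ (there $|\nabla\varphi|=1$), so $\hat X\cap\Omega$ is a smooth hypersurface, and $\eqref{hessian}$ rewrites as $\nabla dr=\cot(r)\,(g-dr\otimes dr)$; thus the level sets of $r$ are totally umbilic with the second fundamental form of a metric sphere, and the equator is totally geodesic. Integrating this along the radial geodesics, equivalently using $\mathcal L_{\nabla r}(g-dr\otimes dr)=2\cot(r)(g-dr\otimes dr)$, produces on the regular part the normal form $g=dr^2+\sin^2(r)\hat g=dt^2+\cos^2(t)\hat g$ with $t=r-\pi/2$ and $\hat g$ the metric induced on $\hat X$. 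The same warped structure lets me verify that $\hat X$ is an admissible stratified space of dimension $n-1$: its regular set is $\hat X\cap\Omega$, its strata are the sets $\hat X\cap\Sigma^j$ with the inherited cone-bundle structure, the warped relation together with $Ric_g\ge(n-1)g$ forces $Ric_{\hat g}\ge(n-2)\hat g$, and the codimension-two cone angles of $\hat X$ coincide with those of $X$, hence remain below $2\pi$.

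It then remains to assemble the global map $\Phi\colon[-\pi/2,\pi/2]\times\hat X\to X$, $\Phi(t,y)=\sigma_y(\pi/2+t)$, where $\sigma_y$ is the radial geodesic through $y\in\hat X$ joining $P$ and $N$; on the regular strata $\Phi$ is the isometry just found, and by continuity and the density of $\Omega$ the identity $\Phi^*g=dt^2+\cos^2(t)\hat g$ would extend to a global isometry of stratified spaces once $\Phi$ is known to be a stratified bijection. This last point is where the stratified hypotheses genuinely enter, and I expect it to be the main obstacle: I must show that every point of $X$ lies on an essentially unique minimizing geodesic joining $P$ and $N$, that such geodesics neither branch nor terminate in the interior, and that a radial geodesic through a singular point $y\in\hat X\cap\Sigma^{j-1}$ stays inside the corresponding stratum of $X$ while one through a regular point of $\hat X$ stays in $\Omega$. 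Here I would use the uniqueness of geodesics issuing from regular points (guaranteed by the $C^2$ assumption on the metric), Lemma $\ref{curve}$ to approximate arbitrary curves by regular ones and pass length bounds to the limit, and the tangent-cone description at the poles: the tangent cone at $P$ is the Euclidean cone over its tangent sphere, which one identifies isometrically with $(\hat X,\hat g)$, so that the two suspension points of $S(\hat X)$ correspond exactly to $P$ and $N$. Establishing this compatibility between the global geodesic foliation and the local conical structure along each stratum is the crux of the argument.
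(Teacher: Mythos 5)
Your reverse implication and your local computations reproduce the paper's Step 1 faithfully: the ODE $\psi''=-\psi$ along geodesics in $\Omega$, the normalization $\eqref{UguaglianzaRisolutiva}$, the umbilic level sets of $r$, and the warped form $g=dt^2+\cos^2(t)\hat g$ on the regular part are all correct and are exactly how the paper begins. But the part you defer as ``the main obstacle'' is not a technical remainder --- it is the actual content of the theorem in the stratified setting, and your proposal contains no mechanism for it. The first concrete gap: you must show the radial flow from the equator does not meet $\Sigma$ before arclength $\pi/2$, and the tools you list (uniqueness of geodesics issuing from regular points, Lemma \ref{curve}) do not yield this. The paper's proof is an \emph{induction on the dimension}, which your outline does not use at all: Lemma \ref{Lemma3} (proved by an integration-by-parts argument with the two eigenfunctions $\cos\dist(\cdot,P)$ and $\cos\dist(\cdot,N)$) shows every point lies on a minimizing geodesic from $P$ to $N$; Lemma \ref{Lemma2} then gives that the tangent sphere at any such point has diameter $\pi$, so by the induction hypothesis it is a spherical suspension and the tangent cone splits as $\R\times C(Y)$ (Corollary \ref{splitting}), with the split coordinate identified as a Busemann function (Remark \ref{Busemann}). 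At the first point $x_0$ where the flow would hit $\Sigma$, a blow-up of the already-established warped product exhibits a region isometric to $\R_+\times\R_+\times\s^{n-2}_+$ inside $C(S_{x_0})$; the first coordinate coincides with minus the Busemann function of the limit geodesic, which is onto $\R$, so $C(S_{x_0})$ contains $\R_+\times\R^{n-1}$, forcing $C(S_{x_0})\cong\R^n$ and $x_0\in\Omega$, a contradiction. Without this (or a substitute), your map $\Phi$ is not even defined on all of $[-\pi/2,\pi/2]\times\hat X$; note also that your claim $\psi'(0)=0$ at $P$ is illegitimate, since $P$ may be a singular point where $\nabla\varphi$ is undefined and geodesics from $P$ need not lie in $\Omega$.

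The second concrete gap is the admissibility of $\hat X$. You take $\hat X=\{\varphi=0\}$ and assert that the sets $\hat X\cap\Sigma^j$ carry ``the inherited cone-bundle structure'' and that codimension-two cone angles are preserved; neither is automatic --- a priori the level set could meet the strata badly, and you give no argument. The paper avoids any direct verification: it works only with the regular equator $\Gamma_0=\{\varphi=0\}\cap\Omega$, and in its Step 3 compares the blow-up at $P$ of the warped product (which converges to $C(\Gamma_0)$ with metric $ds^2+s^2\hat g$) with the tangent cone at $P$ (which is $C(S_P)$); uniqueness of tangent cones then forces the metric completion of $\Gamma_0$ to be isometric to the tangent sphere $S_P$, which is an admissible stratified space by Remark \ref{rem}. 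This identification delivers the stratified structure, the Ricci bound, and the cone-angle condition for $\hat X$ all at once, and the warped scaling of equatorial lengths by $\cos(t)$ shows the images of $\Gamma_0\times\{\pm\pi/2\}$ collapse to the single points $N,P$, resolving your worry about uniqueness and non-branching of the geodesic foliation at the poles. In short, your skeleton is the correct smooth Obata argument, but the induction on dimension and the tangent-cone/Busemann splitting analysis --- which are precisely what make the proof work on a stratified space --- are missing.
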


This theorem has an immediate consequence for cones over admissible stratified spaces whose diameter is equal to $\pi$, which is going to play a role in the proof. We are first going to prove the following: 

\begin{cor}[Splitting]
\label{splitting}
Let $(X^n,g)$ be an admissible stratified space of diameter equal to $\pi$. Then the cone $C(X)$ splits into the product $\R \times C(Y)$, where $(Y,k)$ is an admissible stratified space. 
\end{cor}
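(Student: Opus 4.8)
The plan is to use the previous equivalence theorem together with the Hessian equation $\eqref{hessian}$ to produce, on the cone $C(X)$, a function whose gradient is a \emph{parallel} unit vector field, and then to read off the splitting from this field. Since $\mathrm{diam}(X)=\pi$, the equivalence theorem provides $\lambda_1(X)=n$ and an eigenfunction $\varphi=\cos(\dist(P,\cdot))$ satisfying $\nabla d\varphi=-\varphi g$ on $\Omega$ and $|\nabla\varphi|^2+\varphi^2=1$. I would equip the cone $C(X)=[0,\infty)\times X$ with its cone metric $\bar g=dr^2+r^2g$, whose regular part is $(0,\infty)\times\Omega$, and consider the function $F(r,x)=r\,\varphi(x)$. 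A direct computation of $\nabla^2_{\bar g}F$ using the cone Christoffel symbols and the identity $\nabla d\varphi=-\varphi g$ shows that every component vanishes on the regular part: the radial--radial and radial--tangential parts vanish automatically, while the purely tangential part is $r(\nabla d\varphi+\varphi g)=0$. In the same way $|\nabla_{\bar g}F|^2=\varphi^2+|\nabla_g\varphi|^2=1$. Hence $V:=\nabla_{\bar g}F$ is a parallel unit vector field on the regular set of $C(X)$.

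Next I would convert this parallel unit field into an isometric splitting. Since $V$ is parallel with $|V|=1$, its integral curves are unit-speed geodesics along which $F$ grows linearly, the orthogonal distribution $V^{\perp}=\ker dF$ is integrable with totally geodesic leaves $\{F=c\}$, and the flow of $V$ carries leaves isometrically onto leaves. Because the singular set of $C(X)$, namely the cone over $\Sigma$ together with the vertex, has codimension at least two and the regular set is dense and connected, these local product structures should glue to a global isometry $C(X)\cong\R\times\{F=0\}$, the $\R$-factor being the $F$-coordinate. I would identify the zero level set by noting that, off the vertex, $r>0$ forces $\{F=0\}=\{\varphi=0\}$, so $\{F=0\}$ is exactly the cone $C(Y)$ over the equator $Y:=\varphi^{-1}(0)=\{x:\dist(P,x)=\pi/2\}$; the induced metric $dr^2+r^2(g|_Y)$ is then the cone metric over $(Y,k)$ with $k=g|_Y$, and $0$ is a regular value of $\varphi$ on $\Omega$ (there $|\nabla\varphi|^2=1$), so $Y$ is a hypersurface of dimension $n-1$.

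It remains to verify that $(Y^{n-1},k)$ is admissible. The Ricci bound follows from the splitting itself: the cone over an admissible space satisfies $\mathrm{Ric}_{\bar g}\geq 0$, so $C(X)\cong\R\times C(Y)$ forces $\mathrm{Ric}_{C(Y)}\geq 0$, and the warped-product formula for $C(Y)$ translates this into $\mathrm{Ric}_k\geq (n-2)k$ on the regular part of $Y$, which is precisely condition (i) in dimension $n-1$. The absence of a codimension-one stratum and the cone-angle condition (ii) should be inherited from $X$, since the strata of $Y$ and their links are cut out by the regular hypersurface $\{\varphi=0\}$ from those of $X$, and this intersection does not change the angle of the codimension-two stratum.

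The main obstacle I expect is not the Hessian computation, which is routine, but making the splitting rigorous in the singular setting: the field $V$ is only defined and parallel on $\Omega$, so one must show that the integral curves of $V$ do not enter the singular set in a pathological way and that the product structure extends continuously across the strata and the vertex. This is exactly where the codimension $\geq 2$ of the singular set, the density of the regular set, and the earlier analysis of minimizing geodesics (in particular Lemma $\ref{curve}$ and the uniqueness of geodesics issuing from regular points) would be used to control the geometry near the singularities and to identify the global isometry.
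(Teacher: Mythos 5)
Your Hessian computation on the cone is correct: with $\bar g = dr^2 + r^2 g$ and $F(r,x)=r\varphi(x)$, the equation $\nabla d\varphi = -\varphi g$ on $\Omega$ does give $\nabla^2_{\bar g}F = 0$ and $|\nabla_{\bar g}F|=1$ on the regular part of $C(X)$; this correspondence between concircular functions on $X$ and parallel gradient fields on $C(X)$ is exactly right. But the step you yourself flag as ``the main obstacle'' is not a technicality to be smoothed over afterwards --- it is a genuine gap, and it is precisely the content of the theorem this statement is a corollary of. The classical argument converting a parallel unit gradient field into an isometric splitting (integral curves are lines, level sets are totally geodesic, the flow maps leaves isometrically to leaves) requires completeness of the manifold carrying the field; the regular part of $C(X)$ is incomplete, and nothing in your argument rules out that integral curves of $V$ hit the singular set in finite time, or that the leaf $\{F=0\}$ runs into a stratum, at which point the flow and the local product structure cease to exist. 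Density of $\Omega$ and codimension $\geq 2$ of the singular set are not by themselves sufficient to ``glue'' the local products: controlling what happens where the flow first meets $\Sigma$ is exactly what Steps 1--3 of the paper's proof of Theorem \ref{ObataSing} do, using Lemmas \ref{Lemma1}, \ref{Lemma2}, \ref{Lemma3}, a tangent-cone analysis at the first contact point, and an induction on dimension. (Your route is essentially the one the paper attributes to Ketterer, which rests on Gigli's splitting theorem; the paper explicitly avoids it because no such splitting theorem is available for these spaces.) The same gap recurs at the end, where you assert that the completion of $Y=\varphi^{-1}(0)$ ``should inherit'' the stratified structure and the cone-angle condition: identifying this completion as an admissible stratified space is itself nontrivial and, in the paper, is accomplished in Step 3 of the proof of Theorem \ref{ObataSing} via uniqueness of tangent cones, not by restricting strata to a hypersurface.

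What you missed is that the statement is genuinely a corollary of Theorem \ref{ObataSing} and is meant to be deduced from it (inside the proof of that theorem it is only ever applied in lower dimension, through the induction hypothesis, so there is no circularity). The paper's proof is two lines: by the singular Myers theorem, diameter $\pi$ gives $\lambda_1(X)=n$; by Theorem \ref{ObataSing}, $(X,g)$ is isometric to a spherical suspension $\left(\left[-\frac{\pi}{2},\frac{\pi}{2}\right]\times \hat X,\ dt^2+\cos^2(t)\hat g\right)$ with $(\hat X,\hat g)$ admissible; and the change of variables $r=\rho\sin\theta$, $s=\rho\cos\theta$ identifies the product metric $dr^2+ds^2+s^2\hat g$ on $\R\times C(\hat X)$ with the cone metric $d\rho^2+\rho^2(d\theta^2+\cos^2(\theta)\hat g)$ over that suspension. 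Your approach, if completed, would amount to reproving Theorem \ref{ObataSing} from scratch rather than deducing the corollary from it, and as written it leaves open exactly the hole that the theorem's proof was designed to fill.
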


\begin{proof}
It is an easy fact that a cone over a stratified space $(X^n,g)$ splits a factor $\R$ if and only if $(X^n,g)$ is a spherical suspension over a stratified space $(Y, k)$. In fact, consider the metric $dr^2+ds^2+s^2k$ on $\R \times C(Y)$, with $r\in \R$ and $s \in \R^+$. We define the change of variables:
\begin{equation*}
r= \rho \sin(\theta), \quad s=\rho \cos(\theta) \quad  \mbox{ for } \theta \in \left( -\frac{\pi}{2}, \frac{\pi}{2}\right). 
\end{equation*}
Then replacing in the product metric we get:
\begin{equation*}
d\rho^2+ \rho^2(d\theta^2+\cos^2(\theta)k),
\end{equation*}
on the cone over the spherical suspension of $(Y,k)$.

Theorem \ref{ObataSing} states that an admissible stratified space $(X^n,g)$ of diameter $\pi$ is isometric to a spherical suspension over $(\hat{X}, \hat{g}$), and therefore, the cone over $(X^n,g)$ splits a factor $\R$. 
\end{proof}

\begin{rem}\label{m_split}
In the previous Corollary, if $(Y, k)$ has diameter equal to $\pi$, we can iterate this argument, until we get the splitting $\R^m \times C(Y_0)$ for $m \geq 1$ and an admissible stratified space $(Y_0,k_0)$ of diameter strictly less than $\pi$. 
\end{rem}

\begin{rem}\label{Busemann}
Under the assumption of the previous Corollary, let $P$ and $N$ two points in $X$ at distance $\pi$, which in the coordinates given by the spherical suspension corresponds to $\{-\pi/2 \}\times Y$ and $\{\pi/2\}\times Y$ respectively. Consider the geodesic $\gamma_0$ in $C(X)$ relying the vertex $0$ of the cone with $P$ and $N$. Since $C(X)$ is isometric to $\R \times C(Y)$ endowed with the metric $d\rho^2+\rho^2(d\theta^2+\cos^2(\theta)k$, the geodesic $\gamma_0$ is defined on the whole $R$: it is the radius connecting $0$ and $N$ on $\R_+$, the one connecting $0$ and $P$ on $\R_{-}$. We claim that the first coordinates $\rho$ in the metric corresponds to the opposite of the Busemann function of the geodesic $\gamma_0$. Indeed, let $x$ be a point in $C(X)=\R\times C(Y)$ of coordinates $(\rho(x), \theta(x), y)$ and $\gamma_0(t)=(t,0,0)$ a point of the geodesic $\gamma_0$. The Busemann function associated to $\gamma_0$ is defined as:
\begin{equation*}
B_{\gamma_0}(x)=\lim_{t\rightarrow +\infty}(\mbox{d}_{C(X)}(\gamma_0(t),x)-t),
\end{equation*}
and by using the formula for the distance in $C(X)=\R\times C(Y)$ we get:
\begin{align*}
B_{\gamma_0}(x)&=\lim_{t\rightarrow +\infty}(\sqrt{|t-\rho(x)|^2+s(x)^2}-t)\\
&= \lim_{t\rightarrow +\infty} \frac{-2\rho(x)t+\rho(x)^2+s(x)^2}{\sqrt{|t-\rho(x)|^2+s(x)^2}+t}=-\rho(x), 
\end{align*}
As we claimed above. Observe also that the Busemann function of $\gamma_0$ is onto on $\R$, since for any point $\gamma_0(s)$ of the geodesic we have $B_{\gamma_0}(\gamma_0(s))=-s$.  
\end{rem}

For the purposes of the proof of Theorem \ref{ObataSing}, we need some information about minimizing geodesics on an admissible stratified space. For a minimizing geodesic we mean a Lipschitz curve $\gamma: I \rightarrow X$ such that for any $t_1, t_2$ in the interval $I$ we have $\mbox{d}_{g}(\gamma(t_1), \gamma(t_2))=|t_2-t_1|$. We point out here that little is known about minimizing geodesics on general stratified spaces, their regularity and the uniqueness of a minimizing geodesic between two points, in particular when one or both of them belong to the singular set. 

\begin{lemma}
\label{Lemma1}
Let $X$ be an admissible stratified space, $x$ be in $X$ and $\gamma : [0,1] \rightarrow X$ a Lipschitz minimizing geodesic starting from $x$. Then $\dot{\gamma}(0)$ is well-defined and unique. 
\end{lemma}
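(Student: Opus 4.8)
The plan is to split the argument according to whether the base point $x$ is regular or singular, and in both cases to realise $\dot\gamma(0)$ as the initial direction of the blow-up of $\gamma$ at $x$ inside the tangent cone.

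\textbf{Regular base point.} If $x\in\Omega$, then since $\Omega$ is open there is a $\delta>0$ with $\gamma([0,\delta])\subset\Omega$, and on this interval $\gamma$ is a minimizing geodesic of the smooth Riemannian manifold $(\Omega,g)$. As $g$ is $C^2$ near $x$, the geodesic equation has a unique solution for prescribed initial position and velocity, every minimizing geodesic solves it, and hence $\gamma$ is $C^2$ on $[0,\delta]$ and $\dot\gamma(0)$ exists and is unique. This is exactly the regularity the paper assumes in order to have uniqueness of geodesics from regular points.

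\textbf{Singular base point.} Suppose $x\in\Sigma^j$. First I would blow up: the rescaled curves $\gamma_\lambda(s)=\gamma(s/\lambda)$ are unit-speed minimizing geodesics of $(X,\lambda\,\dist)$ based at $x$. Since $(X,\lambda\,\dist,x)$ converges in the pointed Gromov--Hausdorff sense to the exact tangent cone $C(S_x)$ and the $\gamma_\lambda$ are $1$-Lipschitz, Arzelà--Ascoli provides subsequential limits $\gamma_\infty$, each a minimizing ray from the vertex $o$. In an exact cone a minimizing geodesic issuing from $o$ must be radial, so $\gamma_\infty(s)=(s,p)$ for some $p\in S_x$, and I set $\dot\gamma(0):=p$. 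To see that $p$ is independent of the subsequence, at least between comparable scales, I would use the minimizing property: for $0<a<b$ small one has $\dist(x,\gamma(a))=a$, $\dist(x,\gamma(b))=b$ and $\dist(\gamma(a),\gamma(b))=b-a$, whence the comparison angle at $x$ satisfies $\cos\tilde\angle_x(\gamma(a),\gamma(b))=1$; it vanishes identically at every scale. Taking $a=\theta b$ with $\theta\in(0,1]$ fixed and passing to the Gromov--Hausdorff limit along the common scale $1/b$, scale invariance of comparison angles forces the two radial limits to satisfy $d_{S_x}(p,p')=0$, so the blow-up direction is unchanged under rescaling by any fixed factor.

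\textbf{Main obstacle.} The hard part is upgrading this to genuine uniqueness across \emph{incomparable} scales (ratios tending to $0$), i.e. ruling out that the angular coordinate of $\gamma(t)$ drifts in $S_x$ as $t\to 0$. To close this gap I would exploit the quantitative closeness $|g-(dr^2+r^2h_x)|\le\Lambda r^\alpha$: on the set where $\gamma(t)\in\Omega$ the geodesic equation for the perturbed cone metric bounds the angular velocity of $\gamma$ by a quantity of order $r^{\alpha-1}$, which is integrable near $r=0$ since $\alpha>0$, so the angular coordinate is Cauchy and converges to a single $p\in S_x$. I expect this angular-convergence estimate, together with control of the (possibly nonempty) set of times at which $\gamma$ meets the singular set, to be the technical heart of the proof. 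A cleaner alternative would be an induction on the depth of the stratification, using that the tangent cone splits as $\R^j\times C(Z_j)$ with $Z_j$ an admissible link of strictly smaller depth: the $\R^j$-direction is unique by the smooth ODE argument, while the $C(Z_j)$-component reduces the uniqueness to the lower-depth link, the base case being a smooth manifold.
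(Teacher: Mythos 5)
Your setup (blow-up to the tangent cone, angular coordinate $\theta(t)\in S_x$, need to rule out drift as $t\to 0$) matches the paper's, and you correctly isolate the real difficulty: uniqueness of the angular limit across incomparable scales. But the method you propose to close that gap does not work under the paper's hypotheses, in two ways. First, bounding the angular velocity by $r^{\alpha-1}$ via the geodesic equation requires control of the \emph{Christoffel symbols} of $g$ relative to those of the exact cone metric, i.e.\ a first-derivative bound of the type $|\partial(g-(dr^2+r^2h_x))|\lesssim r^{\alpha-1}$. The paper only assumes the $C^0$ closeness $|g-(dr^2+r^2h_x)|\leq \Lambda r^{\alpha}$ together with qualitative $C^2$ regularity of the perturbation; a $C^0$-small perturbation can have oscillating first derivatives, so the claimed velocity estimate is not a consequence of the stated assumptions. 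Second, even granting such an estimate on $\Omega$, your argument has nothing to say when $\gamma$ meets $\Sigma$ on a set of times accumulating at $0$, or lies entirely inside a singular stratum: there is no geodesic ODE there, and this is not a removable corner case, since the lemma is invoked in the paper precisely for geodesics through arbitrary, possibly singular, points. The ``cleaner alternative'' by induction on depth has the same defect: a geodesic of the perturbed metric does not decompose into an $\R^j$-component solving a smooth ODE and a link component, so the reduction to the link is not an argument.

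The paper closes exactly this gap by a purely metric (ODE-free) argument, and your own comparison-angle computation is its qualitative germ. Since $\gamma$ is minimizing, $\mbox{d}_g(\gamma(t),\gamma(s))=|t-s|$, and the $C^0$ comparison of distances with the exact cone gives, for $t,s$ small,
\begin{equation*}
\left|1-\sqrt{1+\frac{4st}{|t-s|^2}\sin^2\!\left(\frac{\mbox{d}_{S_x}(\theta(t),\theta(s))}{2}\right)}\right|\leq \Lambda \max\{t,s\}^{\alpha}.
\end{equation*}
At \emph{comparable} scales (say $s\in[t/2,t]$) the prefactor $4st/|t-s|^2$ is bounded below, so this forces $\mbox{d}_{S_x}(\theta(t),\theta(s))\lesssim t^{\alpha/2}$: this is the quantitative version of your statement that the comparison angle vanishes. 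The point you are missing is that this single estimate, chained along the dyadic scales $t_j=2^{-j}$, already yields a geometric and hence summable series of consecutive angular distances, so $\theta(t_j)$ is a Cauchy sequence in $S_x$; an arbitrary sequence $s_i\to 0$ is then compared to the dyadic scale $t_{j_i}$ with $2^{-j_i-1}\leq s_i<2^{-j_i}$ using the same inequality. No derivative control of the metric is needed, and the argument is insensitive to whether $\gamma$ touches the singular set. As written, your proposal leaves its ``technical heart'' resting on an estimate that the hypotheses do not provide.
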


\begin{proof}
We know that if $X$ is an admissible stratified space, the diameter of $X$ is smaller or equal than $\pi$, and moreover, thanks to Remark \ref{rem}, that each tangent sphere is an admissible stratified space: therefore, the diameter of each tangent sphere is less or equal than $\pi$. As a consequence, if we consider the tangent cone $C(S_x)$ the distance between two points $(t,y)$ and $(s,z)$ is given by:
\begin{equation}
\label{ConeDist}
\mbox{d}_{C}((t,y),(s,z))=\sqrt{t^2+s^2-2rs\cos \distArg{S_x}(y,z)}
\end{equation}
Recall that for $t$ small enough, a neighbourhood $B(x,t)$ of a point $x$ in $X$ in included in an open neighbourhood $\Omega_x$ of $x$ which is homeomorphic to a truncated cone of size $kt$, for a positive constant $k$, over the tangent sphere $S_x$ at $x$. Moreover, the metric $g$ on $B(x,t)$ and the conic metric on $C_{[0,kt]}(S_x)$ differ for an error which is proportional to $t^{\alpha}$ for $\alpha>0$. If we consider this estimate in terms of the distances associated to $g$ and to the conic metric, we get the following: for any $y$ in $B(x,t)$ with coordinates $(r,z)$ in $C_{[0, kt)}(S_x)$ we have
\begin{equation}
\label{distanceSommet}
|\dist(x,y)-\mbox{d}_{C}(0,(r,z))|\leq \Lambda t^{1+\alpha},
\end{equation}
where $\Lambda$ is a positive constant independent of $x$. 

For a sufficiently small time $t$, the point $\gamma(t)$ belongs to $\Omega_x$ and we can associate to $\gamma(t)$ coordinates in the cone $C_{[0,kt]}(S_x)$, which we denote $(r(t), \theta(t))$, with $\theta(t)$ in $S_x$. We aim to show that these coordinates in the tangent cone admit a unique limit $\dot{\gamma}(0)=(0,\theta(0))$ as $t$ tends to zero. 

For what concerns the radial coordinate $r$ the situation is simpler. Thanks to the inequality $\eqref{distanceSommet}$ we have:
\begin{equation*}
|\dist(x, \gamma(t)) - \mbox{d}_{C}((0, \theta(0)), (r(t), \theta(t))| \leq \Lambda t^{1+\alpha}.
\end{equation*}
Since $\gamma$ is a minimizing geodesic and by using the expression $\eqref{ConeDist}$ for the distance in the cone, we get:
\begin{equation*}
|t-r(t)| \leq \Lambda t^{1+\alpha},
\end{equation*}
which means that the radial coordinate satisfies $r(t)=t+ O(t^{1+\alpha})$. As a consequence, $r(t)$ easily converges to zero as $t$ goes to zero. For simplicity, from now on in the proof we will replace $r(t)$ by $t$: we leave to the reader the straightforward computation with $t+O(t^{1+\alpha})$.

It remains to show that $\theta(t)$ converges to a unique point $\theta(0)$ in $S_x$. Since $S_x$ is compact, we know that for any sequence $t_j$ going to zero, there exists a subsequence such that $\theta(t_j)$ converges to a point in $S_x$. We want to prove that for any two sequences $t_j$, $s_j$ tending to zero, such point is the same. 

Consider $t, s>0$ sufficiently small. Then $\gamma(t)$ and $\gamma(s)$ belongs to a ball centred at $x$ of radius equal to the maximum between $t$ and $s$. As we recalled above, such ball is included in an open neighbourhood of $x$ homeomorphic to a truncated cone over $S_x$. The estimate for the metrics together with the fact that $\gamma$ is minimizing lead to the following: 
\begin{equation}
\label{diffDist0}
|\dist(\gamma(t), \gamma(s))-\sqrt{t^2+s^2-2st\cos \distArg{S_x}(\theta(t), \theta(s))}|\leq \Lambda\max\left\{t,s\right\}^{\alpha}\dist(\gamma(t), \gamma(s))
\end{equation}
which can be rewritten as:
\begin{equation}
\label{diffDist}
\left|1-\sqrt{1+4\frac{st}{|t-s|^2}\sin^2 \left( \frac{\distArg{S_x}(\theta(t), \theta(s))}{2}\right)}\right|\leq \Lambda \max\{t,s\}^{\alpha}. 
\end{equation}
Consider the sequence $t_j=2^{-j}$: first, we are going to show that the sequence $\theta(t_j)$ converges to a point $z_0$ in $S_x$ without passing to a subsequence. This is done by proving that $\theta(t_j)$ is a Cauchy sequence. Then, we are going to prove that for any other sequence $s_j$ tending to zero as $j$ goes to infinity, $\theta(s_j)$ converges to $z_0$ as well. 

In the inequality $\eqref{diffDist}$ replace $t=t_j$ and $s=t_{j+1}$. We then obtain:
\begin{equation*}
\left| 1-\sqrt{1+8\sin^2\left( \frac{\distArg{S_x}(\theta(t_j), \theta(t_{j+1}))}{2}\right)}\right|\leq 2\Lambda \left(\frac{1}{2^{\alpha}}\right)^j
\end{equation*} 
This implies that the distance between $\theta(t_j)$ and $\theta(t_{j+1})$ converges to zero as $j$ tends to infinity. More precisely, by multiplying by the conjugate quantity and by using the Taylor expansion of sine at zero, we can state that there exists a positive constant $C$ such that: 
\begin{equation*}
\distArg{S_x}(\theta(t_j), \theta(t_{j+1})) \leq C \left(\frac{1}{2^{\alpha}}\right)^j. 
\end{equation*}
The sequence $2^{-\alpha j}$ is such that its series converges and therefore $\theta(t_j)$ is a Cauchy sequence. Then it converges to a point $z_0$ in $S_x$, without passing to any subsequence. 
Now consider a sequence $s_i$ going to zero as $i$ tends to infinity. We need to prove that $\theta(s_i)$ converges to $z_0$. For any $i$, choose $j_i$ in $\N$ such that $2^{-j_i-1} \leq s_i < 2^{-j_i}$. Then by the triangular inequality we have:
\begin{equation*}
\distArg{S_x}(\theta(s_i), z_0) \leq \distArg{S_x}(\theta(s_i), \theta(t_{j_i}))+ \distArg{S_x}(\theta(t_{j_i}), z_0). 
\end{equation*}
We know that the second term in the right-hand side tends to zero as $j_i$ goes to infinity. As for the first term consider the inequality $\eqref{diffDist}$ with $t=t_{j_i}$ and $s=s_i$. We multiply and divide the left-hand side of $\eqref{diffDist}$ by the conjugate quantity: 
\begin{equation}
\label{ConjTheta}
\frac{\frac{4ts}{|t-s|^2}\sin^2\left( \frac{\distArg{S_x}(\theta(t), \theta(s))}{2}\right)}{1+\sqrt{1+\frac{4ts}{|t-s|^2}\sin^2\left( \frac{\distArg{S_x}(\theta(t), \theta(s))}{2}\right)}}\leq \max\{s,t\}^{\alpha}=2^{-\alpha j_i}.
\end{equation}
Denote by $\rho$ the numerator of this expression and rewrite the previous as:
\begin{equation*}
f(\rho)=\frac{\rho}{1+\sqrt{1+\rho}} \leq 2^{-\alpha j_i}.
\end{equation*}
For $j_i$ sufficiently large, the right-hand side of this inequality is smaller than one. Since the function $f$ is increasing and $f(3)=1$, we get that $\rho$ belongs to the interval $(0,3)$. Then again by using the previous inequality we obtain:
\begin{equation*}
\rho \leq 2^{-\alpha j_i}(1+\sqrt{1+\rho}) \leq 3 \cdot 2^{-\alpha j_i}. 
\end{equation*}
Getting back to $\eqref{ConjTheta}$, we have obtained: 
\begin{equation*}
\sin^2\left(\frac{\distArg{S_x}(\theta(t), \theta(s))}{2}\right) < 3 \cdot 2^{-\alpha j_i} \frac{|t-s|^2}{4ts}. 
\end{equation*}
Now, thanks to our choice of $t$ and $s$ we have the following bounds:
\begin{equation*}
2^{-2j_i-1} \leq  ts < 2^{-2j_i},  \quad \quad 
|t-s| <2^{-j_i-1},
\end{equation*}
which imply that for some positive constant $C_1$ we have:
\begin{equation*}
\sin^2\left(\frac{\distArg{S_x}(\theta(t), \theta(s))}{2}\right) \leq C_1 2^{-\alpha j_i}
\end{equation*}
We have then shown that the distance in $S_x$ between $\theta(s_i)$ and $\theta(t_{j_i})$ must tend to zero as $i$ tend to infinity. Therefore $\theta(s_i)$ converges to $z_0$, and this is true for any sequence $\{s_i\}$ tending to zero. This proves that $\theta(0)$ in $S_x$, and then $\dot{\gamma}(0)$ in $C(S_x)$, are well defined and unique, as we wished.  
\end{proof}

\begin{lemma}
\label{Lemma2}
Let $(X,g)$ be an admissible stratified space, $\gamma: [-\eps, \eps] \rightarrow X$ a minimizing geodesic and let $x$ be the point $\gamma(0)$. Then the diameter of the tangent sphere $S_x$ is equal to $\pi$. 
\end{lemma}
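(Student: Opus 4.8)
The plan is to combine the upper bound on the diameter of $S_x$ coming from the singular Myers theorem with a matching lower bound produced by examining the two one-sided branches of $\gamma$ inside the tangent cone at $x$. Since $x=\gamma(0)$ is an interior point of the minimizing geodesic, the curves $\gamma^{+}(t)=\gamma(t)$ and $\gamma^{-}(t)=\gamma(-t)$, for $t\in[0,\eps]$, are both minimizing geodesics issuing from $x$. By Remark~\ref{rem} the tangent sphere $S_x$ is itself an admissible stratified space, so Theorem~\ref{MyersSing} gives $\mbox{diam}(S_x)\leq\pi$; it therefore suffices to exhibit two points of $S_x$ at distance exactly $\pi$. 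The natural candidates are the initial directions $p^{\pm}\in S_x$ of $\gamma^{\pm}$, which are well defined and unique by Lemma~\ref{Lemma1}.

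The key observation is that, because $\gamma$ is minimizing, $\dist(\gamma^{+}(t),\gamma^{-}(t))=\dist(\gamma(t),\gamma(-t))=2t$ for all small $t>0$, directly from the definition of a minimizing geodesic. I would transfer this identity to the tangent cone. Writing the cone coordinates of $\gamma^{\pm}(t)$ as $(r^{\pm}(t),\theta^{\pm}(t))$, the computation in the proof of Lemma~\ref{Lemma1} already gives $r^{\pm}(t)=t+O(t^{1+\alpha})$ and $\theta^{\pm}(t)\to p^{\pm}$ in $S_x$. Applying the metric-comparison estimate $\eqref{distanceSommet}$ to the pair $\gamma^{+}(t),\gamma^{-}(t)$ (both of which lie in the ball $B(x,t)$), exactly as in the derivation of $\eqref{diffDist0}$, and using the cone distance formula $\eqref{ConeDist}$ (valid since $\mbox{diam}(S_x)\le\pi$), I obtain
\[
\Bigl| 2t-\sqrt{(r^{+})^2+(r^{-})^2-2r^{+}r^{-}\cos \distArg{S_x}(\theta^{+}(t),\theta^{-}(t))}\Bigr|\leq \Lambda\, t^{1+\alpha}.
\]

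Substituting $r^{\pm}(t)=t(1+O(t^{\alpha}))$ and using the half-angle identity $\sqrt{2-2\cos\phi}=2\sin(\phi/2)$, which is licit because $\distArg{S_x}(\theta^{+}(t),\theta^{-}(t))/2\in[0,\pi/2]$, this reduces, after dividing by $2t$, to
\[
\Bigl|1-\sin\Bigl(\tfrac12\distArg{S_x}(\theta^{+}(t),\theta^{-}(t))\Bigr)\Bigr|\leq C\,t^{\alpha}.
\]
Letting $t\to 0$ and using the continuity of the distance together with $\theta^{\pm}(t)\to p^{\pm}$, I conclude $\sin\bigl(\tfrac12\distArg{S_x}(p^{+},p^{-})\bigr)=1$, hence $\distArg{S_x}(p^{+},p^{-})=\pi$. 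Combined with the upper bound this forces $\mbox{diam}(S_x)=\pi$.

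I expect the main obstacle to be the careful bookkeeping of the error terms: one must verify that the multiplicative error $\Lambda\max\{t,s\}^{\alpha}\dist(\cdot,\cdot)$ in $\eqref{diffDist0}$ together with the radial error $O(t^{1+\alpha})$ combine to a genuine $O(t^{\alpha})$ after dividing by $2t$, so that they vanish in the limit and do not interfere with the sharp identity $\sin(\phi/2)=1$. The geometric content --- that the two exit directions of a geodesic through $x$ are antipodal in the tangent sphere $S_x$ --- is transparent; the analytic work lies entirely in making the cone approximation quantitative enough to survive the division by $t$.
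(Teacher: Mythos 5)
Your proposal is correct and follows essentially the same route as the paper: both take the upper bound $\mbox{diam}(S_x)\leq\pi$ from Remark \ref{rem} and the singular Myers theorem, identify the two candidate points as the limits in $S_x$ of the angular coordinates of $\gamma(t)$ and $\gamma(-t)$ (well defined by Lemma \ref{Lemma1}), apply the comparison estimate $\eqref{diffDist0}$ to the pair $\gamma(t),\gamma(-t)$ with $\dist(\gamma(t),\gamma(-t))=2t$, and divide by $2t$ to force $\sin\bigl(\tfrac12 \distArg{S_x}(\theta_+,\theta_-)\bigr)\to 1$. The error bookkeeping you flag is exactly what the paper sidesteps by "replacing $r(t)$ by $|t|$", and it goes through as you expect (possibly with a degraded exponent such as $t^{\alpha/2}$, which is harmless since only the vanishing of the error in the limit matters).
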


\begin{proof}
As we recalled above, for each point $x$ of $X$ the tangent sphere $S_x$ is an admissible stratified space, and then by the singular Myers theorem we know that its diameter is less or equal than $\pi$. As a consequence, it suffices to find two points in $S_x$ at distance $\pi$. As we did in the previous proof, for a time $t$ small enough, we can associate to $\gamma(t)$ the coordinates $(r(t), \theta(t))$ in $C(S_x)$. Observe that $r(t)$ belongs to $\R_+$, and since we are considering negative values for $t$, if we repeat the same argument as above for the variable $r(t)$ we get $r(t)=|t|+O(t^{1+\alpha})$. 

We claim that the two points at distance $\pi$ in $S_x$ are given by:
\begin{equation*}
\theta_+=\lim_{t \rightarrow 0^+} \theta(t), \quad \quad \theta_-=\lim_{t \rightarrow 0^-} \theta(t)
\end{equation*}
Both of the two limits exist in $S_x$ thanks to the previous Lemma. 

Fix $t>0$, consider $\theta(t)$ and $\theta(-t)$. By using $\eqref{diffDist0}$ and again for simplicity by replacing $r(t)$ by $|t|$, we have the following:
\begin{equation*}
\left| 2t - \sqrt{2t^2-2t^2\cos \distArg{S_x}(\theta(t), \theta(t))} \right| \leq 2\Lambda t^{1+\alpha}. 
\end{equation*}
Then we can divide both sides of the inequality by $2t$ and get:
\begin{equation*}
\left| 1 -\sin\left(\frac{\distArg{S_x}(\theta(t), \theta(-t))}{2} \right) \right|= \left| 1-\sqrt{\frac{1-\cos(\distArg{S_x}(\theta(t), \theta(-t))}{2}} \right| \leq \Lambda t^{\alpha}. 
\end{equation*}
As a consequence, when $t$ tends to zero, the distance in $S_x$ between $\theta(t)$ and $\theta(-t)$ must tend to $\pi$, and we get $\distArg{\mbox{\tiny{$S_x$}}}(\theta_+,\theta_-)=\pi$. Then the tangent sphere has diameter equal to $\pi$. 
\end{proof}

\begin{lemma}
\label{Lemma3}
Let $(X^n,g)$ be an admissible stratified space of diameter equal to $\pi$. Let $P$ a point in $X$ such that there exists $N$ in $X$ at distance $\pi$ from $P$. For any point $x_0$, distinct from $P$, if $\gamma_1, \gamma_2$ are respectively minimizing geodesics from $P$ to $x_0$ and from $x_0$ to $N$, then the concatenation of $\gamma_1$ and $\gamma_2$ is a minimizing geodesic from $P$ to $N$. 
\end{lemma}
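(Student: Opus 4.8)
The plan is to reduce the statement to the single distance identity $\dist(P,x_0)+\dist(x_0,N)=\pi$, and then to prove this identity for every $x_0$ by exhibiting two explicit distance eigenfunctions whose sum must vanish.

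First I would note that the triangle inequality already gives $\dist(P,x_0)+\dist(x_0,N)\ge \dist(P,N)=\pi$, so the whole difficulty lies in the reverse inequality, that is, in the equality. Granting it, the concatenation $\gamma$ of $\gamma_1$ and $\gamma_2$, reparametrised by arc length on $[0,\pi]$, is a curve from $P$ to $N$ of length $L_g(\gamma_1)+L_g(\gamma_2)=\dist(P,x_0)+\dist(x_0,N)=\pi=\dist(P,N)$, hence it realises the distance between its endpoints. That it is a minimizing geodesic in the sense used here, i.e. $\dist(\gamma(s),\gamma(t))=t-s$ for all $s\le t$, then follows from a routine chain of triangle inequalities: for $0\le s\le t\le \pi$ one has $\pi=\dist(\gamma(0),\gamma(\pi))\le \dist(\gamma(0),\gamma(s))+\dist(\gamma(s),\gamma(t))+\dist(\gamma(t),\gamma(\pi))\le s+(t-s)+(\pi-t)=\pi$, which forces every inequality to be an equality. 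The case $x_0=N$ is degenerate and immediate, and $x_0=P$ is excluded.

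To prove the equality I would invoke the two distance eigenfunctions. Since $\dist(P,N)=\pi$ equals the diameter, the argument of Theorem~4 in \cite{BakryLedoux}, as used in the equivalence theorem of the previous section, shows that $\varphi=\cos(\dist(P,\cdot))$ is an eigenfunction of $\Delta_g$ for the eigenvalue $n$; because $P$ is in turn at distance $\pi$ from $N$, the same applies to $\tilde\varphi=\cos(\dist(N,\cdot))$. Each therefore satisfies the Hessian equation \eqref{hessian}, so that $\nabla d\varphi=-\varphi g$ and $\nabla d\tilde\varphi=-\tilde\varphi g$ on $\Omega$. Setting $h=\varphi+\tilde\varphi$, linearity yields $\nabla dh=-h\,g$, and then exactly the computation of the Myers proof gives $d(|\nabla h|^2+h^2)=2\nabla dh(\cdot,\nabla h)+2h\,dh=0$, so $|\nabla h|^2+h^2$ is constant on the connected regular set $\Omega$. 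I then evaluate this constant by letting $x\to P$ along $\Omega$: the chain rule together with $|\nabla \dist(P,\cdot)|\le 1$ gives $|\nabla\varphi|\le \sin(\dist(P,\cdot))$ and likewise $|\nabla\tilde\varphi|\le \sin(\dist(N,\cdot))$, while $\dist(P,x)\to 0$ and $\dist(N,x)\to \dist(N,P)=\pi$ make both sines tend to $0$; hence $|\nabla h(x)|\to 0$, and $h(x)\to \varphi(P)+\tilde\varphi(P)=1+(-1)=0$. The constant is thus $0$, forcing $h\equiv 0$, i.e. $\cos(\dist(P,x))=-\cos(\dist(N,x))$ for all $x$. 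As the diameter is at most $\pi$, both arguments lie in $[0,\pi]$ where $\cos$ is injective, and $-\cos(\dist(N,x))=\cos(\pi-\dist(N,x))$ with $\pi-\dist(N,x)\in[0,\pi]$; therefore $\dist(P,x)=\pi-\dist(N,x)$, which is the desired identity.

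The main obstacle I anticipate is precisely the rigorous identification $\varphi+\tilde\varphi\equiv 0$: the conserved quantity $|\nabla h|^2+h^2$ is controlled only on the open regular set $\Omega$, yet its constant value must be read off at the point $P$, which may well be singular. The crux is that this limiting evaluation is legitimate, because $\varphi$ and $\tilde\varphi$ extend continuously to $P$ with the correct values while their gradients are squeezed to zero by the elementary bound $|\nabla\cos(\dist(\cdot))|\le \sin(\dist(\cdot))$. Once this point is secured, everything else is soft distance-identity and triangle-inequality bookkeeping.
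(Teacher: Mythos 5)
Your proof is correct, and it takes a genuinely different route from the paper's. The paper reduces the lemma to the same identity $\dist(x_0,P)+\dist(x_0,N)=\pi$ and starts from the same two eigenfunctions $\varphi_P=\cos(\dist(\cdot,P))$ and $\varphi_N=\cos(\dist(\cdot,N))$ supplied by Theorem 4 of \cite{BakryLedoux}, but from there it argues integrally: on $\mathcal{U}_+=\{\varphi_P>0\}$ it forms the regularized quotient $v_{\eps}=\varphi_P/(\eps-\varphi_N)$, integrates by parts as in the proof of Theorem \ref{MyersSing} to obtain $\int_{\mathcal{U}_+}|d(v_{\eps}\varphi_N)|^2\,dv_g\geq n\int_{\mathcal{U}_+}v_{\eps}^2\varphi_N^2\,dv_g$, lets $\eps\to 0$, and uses the fact that $\varphi_P$ attains equality to conclude that $-\varphi_P/\varphi_N$ is constant, equal to one, on $\mathcal{U}_+$; it must then treat separately the cases $\dist(x_0,P)<\pi/2$, $\dist(x_0,P)>\pi/2$ (exchanging the roles of $P$ and $N$) and $\dist(x_0,P)=\pi/2$ (by a cosine monotonicity argument). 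You instead exploit the linearity of the Hessian equation \eqref{hessian}: $h=\varphi_P+\varphi_N$ satisfies $\nabla dh=-h\,g$ on $\Omega$, so $|\nabla h|^2+h^2$ is constant there, and you read off the constant as $0$ at $P$ along a sequence of regular points, using $h\to 1+\cos\pi=0$ and the squeeze $|\nabla h|\leq\sin(\dist(\cdot,P))+\sin(\dist(\cdot,N))\to 0$. This yields the global identity $\varphi_P\equiv-\varphi_N$ in one stroke, with no $\eps$-regularization, no dominated convergence and no case analysis; what the paper's integral route buys in exchange is that it never evaluates anything at the (possibly singular) point $P$, and it produces as a byproduct the uniqueness, up to multiples, of the positive Dirichlet solution on $\mathcal{U}_+$. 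One presentational caution: the distance function is only Lipschitz, so the step ``$|\nabla\varphi_P|\leq\sin(\dist(\cdot,P))$ by the chain rule'' should not literally differentiate $\dist(\cdot,P)$; rather, use that $\varphi_P$ is smooth on $\Omega$ by elliptic regularity and bound its differential at $x\in\Omega$ by the local Lipschitz estimate $|\varphi_P(y)-\varphi_P(x)|\leq\bigl(\sup\sin\bigr)\,|\dist(y,P)-\dist(x,P)|\leq\bigl(\sup\sin\bigr)\,\dist(x,y)$, where the supremum of the sine is taken over the interval between $\dist(x,P)$ and $\dist(y,P)$ and tends to $\sin(\dist(x,P))$ as $y\to x$. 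With that repair, your evaluation of the conserved quantity at $P$ through the dense connected regular set, the injectivity of cosine on $[0,\pi]$, and the concluding triangle-inequality bookkeeping for the arclength-parametrized concatenation are all sound.
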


\begin{proof}
Thanks to the Myers singular theorem \ref{MyersSing}, we know that the first non-zero eigenvalue of the Laplacian is equal to the dimension $n$, and moreover that the function: 
\begin{equation*}
\varphi(x)=\sin{\left(\mbox{d}_g(x,P)-\frac{\pi}{2}\right)}=\cos(\mbox{d}_g(x,P)): X \rightarrow [-1,1]
\end{equation*}
is an eigenfunction for the Laplacian associated to $n$. Let $P, N, x_0$ and $\gamma_1$, $\gamma_2$ be as in the statement. To show that the concatenation $\gamma$ of $\gamma_1$ and $\gamma_2$ is a minimizing geodesic from $P$ to $N$, it suffices to prove that the sum of $\mbox{d}_g(x_0,P)$ and $\dist(x_0,N)$ is equal to $\pi$. Let us consider 
\begin{equation*}
\varphi_N(x)=\cos(\dist(x,N)),
\end{equation*}
which is again an eigenfunction associated to $n$. Assume that the distance from $x_0$ to $P$ is less than $\pi/2$. Denote:
\begin{equation*}
\mathcal{U}_+=\left\{x \in X \mbox{ s.t. } d(x,P) < \frac{\pi}{2} \right\}= \left\{ x \in X \mbox{ s.t. } \varphi(x) > 0\right\}.
\end{equation*}
Then the distance between all points in $\mathcal{U}_+$ and $N$ is larger than $\pi/2$, and $\varphi_N$ is negative on $\mathcal{U}_+$. We are going to use the same integration by parts as we did in the proof of Thoerem \ref{MyersSing}. For any $\eps>0$ define 
\begin{equation*}
v_{\eps}=\frac{\varphi_P}{\eps - \varphi_N},
\end{equation*}
which is a positive function on $\mathcal{U}_+$ and belongs to $W^{1,2}_0(\mathcal{U}_+)$. Consider $v_{\eps}\varphi_N$ and the norm in $L^2$ of its gradient:
\begin{align}
\label{IPPàgogo}
\int_{\mathcal{U}_+} |d(v_{\eps}\varphi_N)|^2 dv_g &= \int_{\mathcal{U}_+} (|dv_{\eps}|^2 \varphi_N^2+2\varphi_N v_{\eps}(dv_{\eps}, d\varphi_N)_g+ v_{\eps}^2|d\varphi_N|^2) dv_g \\
& \geq \int_{\mathcal{U}_+} (d(v_{\eps}^2\varphi_N), d\varphi_N)_gdv_g = \int_{\mathcal{U}_+} v_{\eps}^2\varphi_N \Delta_g \varphi_N dv_g. 
\end{align}
Now, $\varphi_N$ is an eigenfunction of the Laplacian associated to the eigenvalue $n$, and then we obtain:
\begin{equation*}
\int_{\mathcal{U}_+} |d(v_{\eps}\varphi_N)|^2 dv_g \geq n \int_{\mathcal{U}_+} v_{\eps}^2 \varphi_N^2 dv_g.
\end{equation*}
When we let $\eps$ tend to zero, by the dominated convergence theorem, we get:
\begin{equation}
\int_{\mathcal{U}_+} |d(\varphi_P)|^2 dv_g \geq n \int_{\mathcal{U}_+} \varphi_P^2 dv_g. 
\end{equation}
But thanks to Theorem \ref{MyersSing} we already know that the equality is attained for $\varphi_P$, and therefore we have equality in each line of $\eqref{IPPàgogo}$. This implies that $dv_0$ vanishes and $v_0$ is constant on each connected component of $\mathcal{U}_+$, and since $\mathcal{U}_+$ is connected, the quotient $v_0=-\varphi_P/ \varphi_{N}$ is constant on $\mathcal{U}_+$. Both $-\varphi_N$ and $\varphi_P$ take values between $0$ and $1$ on $\mathcal{U}_+$, and as a consequence the constant must be equal to one. 
We have shown that for any $x$ in $\mathcal{U}_+$ we have:
\begin{equation*}
\varphi(x)=\cos(\dist(x,P))=-\cos(\dist(x,N))=-\varphi_N(x). 
\end{equation*}
Which implies that, in particular, $\dist(x_0,P)+\dist(x_0,N)=\pi$. If the distance between $x_0$ and $P$ is larger than $\pi/2$ we can repeat the same argument by exchanging the roles of $P$ and $N$. It remains to study the case in which $x_0$ is at distance equal to $\pi/2$ from $P$. Observe that for any $x$ in $X$ we have:
\begin{equation*}
\dist(x,P)+d(x,N) \geq \pi,
\end{equation*}
and since the cosine is a decreasing function on $[0, \pi]$ we get:
\begin{equation*}
\varphi_N(x)=\cos(\dist(x,N))\leq \cos(\pi-\dist(x,P))=-\cos(\dist(x,P))=-\varphi_P(x). 
\end{equation*}
We have proven in particular that the equality holds in the sets in which $\varphi_N$, $\varphi_P$ do not vanish. If $x_0$ is such that $\varphi_P(x_0)=0$, assume by contradiction that $\varphi_N(x_0)>0$. Thus $x_0$ belongs to the set in which $\varphi_N$ is strictly positive, and we have shown that in this set $\varphi_N$ coincides with $-\varphi_P$. This would imply that $\varphi(x_0)$ is strictly negative, which it is not, and therefore we have proven that $\varphi_P$ and $\varphi_N$ vanish in the same set. This means that if $x_0$ is a distance $\pi/2$ from $P$, then $\dist(x_0,N)$ is equal to $\pi/2$ as well. This concludes the proof.
\end{proof} 


We are now in position to prove Theorem \ref{ObataSing}. 

\begin{proof}[Proof of Theorem \ref{ObataSing}]
One of the two implications is trivial. In fact, if we consider an admissible stratified space $\hat{X}$ of dimension $(n-1)$ and its spherical suspension, the function $\varphi(t)=\sin(t)$ is an eigenfunction with associated eigenvalue $n$. 

Our proof of the other implication is by induction on the dimension of $X$. If $n$ is equal to 1, $X$ is a circle with metric $a^2d\theta^2$ for $a\leq 1$, and then the first eigenvalue of the Laplacian is equal to one if and only if $a$ is equal to one. Assume that we have proven the statement of the theorem for all dimensions $k$ until $(n-1)$ and let $X^n$ be an admissible stratified space of dimension $n$ with diameter equal to $\pi$. The induction hypothesis, together with the previous lemmas, leads to an important consequence on the tangent cones. Let $P$ and $N$ be two antipodal points. Thanks to Lemma \ref{Lemma3}, we know that any point $x$ in $X$, distinct from $P$ and $N$, belongs to the interior of a minimizing geodesic from $P$ to $N$. Then Lemma \ref{Lemma2} implies that the tangent sphere $S_x$ at $x$ has diameter equal to $\pi$. Therefore by the induction hypothesis $S_x$ is isometric to the spherical suspension of an admissible stratified space $(Y,k)$ of dimension $(n-2)$: we can apply Corollary \ref{splitting} to the tangent cone $C(S_x)$ in order to deduce that $C(S_x)$ is isometric to the product $\R\times C(Y)$. If $Y$ has diameter equal to $\pi$, we can iterate this argument and, as we observed in Remark \ref{m_split} we get that $C(S_x)$ is isometric to $\R^m \times C(Y_0)$, where $m \geq 1$ and $(Y_0, k_0)$ is an admissible stratified space of dimension $(n-m-1)$ with diameter strictly smaller than $\pi$. Observe that, since there is no singular stratum of codimension 1, $m$ is either between $1$ and $(n-2)$, and $x$ belongs to the singular set $\Sigma$, or $m=(n-1)$ and $C(Y_0)$ is the real line $\R$, and $x$ is a regular point.

Let us denote $f(x)=\dist(x,P)-\pi/2$. We consider the set of regular points that are equidistant from $P$ and $N$:
\begin{equation*}
\Gamma_0= \left\{ x\in \Omega : d(x,N)=d(x,P) \right\}.
\end{equation*} 
Observe that $\Gamma_0$ also coincides with the subset of the regular set in which $\varphi$ and $f$ vanish, and thus it is not empty. 

Our first goal is to show that any point in $\Gamma_0$ possesses a neighbourhood which is isometric to the product of a neighbourhood $\mathcal{V}$ in $\Gamma_0$ with some small interval, endowed with the appropriate warped product metric. This will show that the metric $g$ locally has the desired form. Then we aim to prove that the regular set $\Omega$ is isometric to $\Gamma_0 \times \left[ -\pi/2, \pi/2\right]$ endowed with a warped product metric. Finally, we will extend the isometry to the whole of $X$ and show that the closure of $\Gamma_0$ with respect to the metric $g$ is in fact a stratified space. \newline

\emph{Step 1}. Let us denote $\hat{g}$ the metric $g$ restricted to $\Gamma_0$. We show that for any $x$ in $\Gamma_0$ there exist a closed neighbourhood $\mathcal{W}$ of $x$ in $X$, a closed neighbourhood $\mathcal{V}$ of $x$ in $\Gamma_0$ and an interval $[0,T_x)$ such that the metric $g$ on $\mathcal{W}$ is isometric to the warped product metric $dt^2+\cos^2(t)\hat{g}$ on $\mathcal{V}\times [0,T_x)$. The argument that we use is similar to the one developed in Proposition 5.1 of \cite{BourCarron} in order to study the case of equality in the refined Kato inequality for 1-forms.

Observe that on the regular set $\Omega$ the gradient $\nabla f(x)$ is well-defined, it has norm equal to one and is the unit normal vector field of the level hypersurface $f^{-1}(f(x))\cap \Omega$. Then for each point $x \in \Gamma_0$ there is a compact neighbourhood $\mathcal{V}$ of $x$, closed in $\Gamma_0$, and an interval $[0,T_x)$ on which the flow $\gamma_x(t)$ of the gradient exists. Since $\mathcal{V}$ includes a closed ball centred in $x$ of radius sufficiently small, we can restrict our study to such ball, and from now on $\mathcal{V}$ is a closed ball in $\Gamma_0$ centred at $x$. Observe that $\gamma_x$ is a minimizing geodesic on $ [0,T_x]$.

The time $T_x$ is defined as follows. For each $y$ in $\mathcal{V}$ we can consider the minimal time of existence for the flow $\gamma_y$, that is:
\begin{equation*}
T(y)=\inf\left\{t>0 \mbox{ such that } \gamma_y(t) \mbox{ belongs to }\Sigma \right\}
\end{equation*}
Then $T_x$ will be the infimum of all these times over $\mathcal{V}$:
\begin{equation*}
T_x=\inf_{y \in \mathcal{V}}T(y). 
\end{equation*}
This means that $T_x$ is the smallest time for which the flow of $\nabla f$ starting at a point of $\mathcal{V}$ intersects the singular set. The function $T(y)$ is lower semi-continuous, and therefore it has a minimum on the compact neighbourhood $\mathcal{V}$: this means that there exists $y_0$ in $\mathcal{V}$ such that $T(y_0)=T_x$. Let us denote $x_0$ the point in $\Sigma$ such that $\gamma_{y_0}(T_x)=x_0$.

By a classical result contained in \cite{Milnor} we get the diffeomorphism:
\begin{align*}
E: \mathcal{V} \times [0,T_x) &\rightarrow f^{-1}([0,T_x)) \cap \Omega \\
E(x,t) &=\gamma_x(t). 
\end{align*}
Then we obtain an isometry if we equip $\mathcal{V} \times [0,T_x)$ with the pull-back metric $E^*g$. We can easily extend this isometry to $\mathcal{V}\times \{T_x\}$. In fact, for any $y$ in $\mathcal{V}$ we can define:
\begin{equation*}
E(y,T_x)=\lim_{t \rightarrow T_x} E(y,t). 
\end{equation*}
This limit exists since $X$ is compact, thus complete, and the function $t \mapsto E(x,t)$ is Lipschitz with Lipschitz constant equal to one. Moreover, since $f$ is continuous, we know that for any $x$ in $\mathcal{V}$ the point $E(y,T_x)$ belongs to $f^{-1}(T_x)$. Then we have obtained an isometry $E$ between the product $\mathcal{V}\times [0,T_x]$ endowed with the metric $E^*g$ and a closed neighbourhood $\mathcal{W}$ of $x$ which is included in $f^{-1}([0,T_x])$:
\begin{equation*}
E: (\mathcal{V} \times [0,T], E^{*}g) \rightarrow (\mathcal{W},g). 
\end{equation*}
We claim that the level hypersurfaces $\mathcal{V} \times \left\{t\right\}$ are umbilical for any $t \in [0,T_x)$. In order to show this observe that $E$ sends $\mathcal{V} \times \left\{t\right\}$ to a regular subset of the inverse image of $t$ via $f$, which we denote $\Gamma_t=f^{-1}(t)\cap \Omega$. Recall that, by definition of $f$, $\Gamma_t$ is the set of regular points which are at distance equal to $(t+ \pi/2)$ from $P$. As a consequence we have that the function $\varphi \circ E$ only depends on $t$:
\begin{equation*}
\varphi(E(x,t))=\cos\left(\dist(E(x,t),P)+\frac{\pi}{2}\right)=\sin(t). 
\end{equation*}
Moreover, $\varphi$ is an eigenfunction relative to the eigenvalue $n$, and thus its Hessian must satisfy the equality $\nabla d\varphi=-\varphi g$: if we look at this relation in the coordinates given by the isometry $E$ we get:
\begin{equation*}
E^*(\nabla d\varphi)=-\sin(t)dt\otimes dt +\cos(t)\nabla dt=-\sin(t) E^*g. 
\end{equation*}
As a consequence we obtain:
\begin{equation*}
\nabla dt=-\tan(t) E^*g.
\end{equation*}
This shows that the Hessian of the hypersurfaces $\mathcal{V}\times \{t\}$ is proportional to the metric, therefore that $\mathcal{V}\times \{t\}$ is umbilical for any $t \in [0, T_x)$. As a consequence, there exists a function $\eta$ such that the metric $E^*g$ on $\mathcal{V}\times [0,T_x)$ is equal to $dt^2+\eta(t)^2\hat{g}$, where $\hat{g}$ is the metric $g$ restricted to $\Gamma_0$. But thanks to the previous equality on the Hessian we know that $\eta$ must satisfy:
\begin{equation*}
\frac{\eta'(t)}{\eta(t)}=-\tan(t), \quad \eta(0)=1. 
\end{equation*}
Therefore we deduce that $\eta(t)=\cos(t)$. We have then proven that, locally, the metric $g$ is isometric to the warped product metric:
\begin{equation*}
E^*g =dt^2+\cos^2(t)\hat{g}. 
\end{equation*}
\newline
\emph{Step 2}. We aim to show that for any $x$ in $\Gamma_0$ the time $T_x$ must be equal to $\pi/2$, or, in other words, that for any $y \in \mathcal{V}$ the geodesic $\gamma_y(t)$ cannot intersect the singular set before getting to a point at distance $\pi$ from $P$. This will allow us to extend the isometry $E$ to the product of $\Gamma_0$ and the interval $[-\frac{\pi}{2}, \frac{\pi}{2}]$. We assume by contradiction that $T_x$ is strictly smaller than $\pi/2$, and we prove that as a consequence $x_0$ must belong to the regular set. In order to do that, we are going to compare the spherical geometry of $\mathcal{V}\times [0,T_x)$ with the geometry of the tangent cone at $x_0$. 

Observe that, if we consider a minimizing geodesic from $P$ to $y_0$ and its concatenation $\gamma$ with $\gamma_{y_0}$, this gives a minimizing geodesic from $P$ to $x_0$, because $x_0$ is exactly at distance $T_x+\pi/2$ from $P$. Lemma \ref{Lemma3} ensures that $\gamma$ can be continued to a minimizing geodesic from $P$ to $N$. Moreover, as we stated above, Lemma \ref{Lemma2} and the induction assumption imply that the tangent cone at $x_0$ is isometric to $\R \times C(Y)$, where the first coordinate in this decomposition is the Busemann function associated to a geodesic joining the vertex of the cone with two antipodal points in $S_{x_0}$. If the diameter of $Y$ is equal to $\pi$, $C(S_{x_0})$ is isometric to $\R^m \times C(Y_0)$, where $Y_0$ is an admissible stratified space with diameter strictly less than $\pi$ and $m$ is between $1$ and $(n-2)$.

The point $y_0$ can belong either to the interior or to the boundary of $\mathcal{V}$. Let us assume that $y_0$ belongs to the boundary of $\mathcal{V}$: the other case will follow easily. Let $\eps$ and $\delta$ be two positive real numbers, sufficiently small, with $\delta << \eps$. Let us consider $x_{\delta}=\gamma_{y_0}(T-\delta)$. If we consider a ball $B(x_0, \eps)$ centred at $x_0$ of radius $\eps$, we know that the truncated tangent cone at $x_0$ is the following pointed Gromov-Hausdorff limit as $\eps$ goes to zero:
\begin{equation*}
C_{[0,1)}(S_{x_0})=\lim_{\eps \rightarrow 0}(B(x_0,\eps), \eps^{-2}g, x_0). 
\end{equation*}
Moreover, the ball $B(x_0,\eps)$ can be seen as the Gromov-Hausdorff limit of the ball $B(x_{\delta}, \eps)$ as $\delta$ goes to zero. In fact, the Gromov-Hausdorff distance between the two balls is less than or equal to the distance between $x_{\delta}$ and $x_0$, which eventually tends to zero. We can write:
\begin{equation*}
C_{[0,1)}(S_{x_0})= \lim_{\eps \rightarrow 0}\lim_{\delta \rightarrow 0}(B(x_{\delta},\eps), \eps^{-2}g, x_0)
\end{equation*} 
Since $x_{\delta}$ belongs to the regular set and we have the isometry $E$, we know part of the geometry of the ball $B(x_{\delta}, \eps)$. More precisely, for $\delta << \eps$ consider a ball $B(y_0, \eps -\delta)$ in $\Gamma_0$ and denote by $B^+(y_0,\eps-\delta)$ the part of this ball which intersects $\mathcal{V}$: if $\eps$ is small enough we can parametrize $B^+(y_0, \eps-\delta)$ by 
\begin{equation*}
([0,\eps-\delta)\times \s^{n-2}_+, \hat{g}=d\rho^2+\rho^2d\sigma_{n-2}^+ + o(\rho^2)),
\end{equation*}
where $\s^{n-2}_+$ is the upper half sphere of dimension $(n-2)$. 
The image via $E$ of the product \mbox{$B^+(y_0, \eps-\delta) \times (T_x-\eps -\delta, T_x-\delta]$} is contained in $B(x_{\delta}, \eps)$, and it is endowed of the metric: 
\begin{equation*}
g_{\delta}=ds^2+\cos^2(T-\delta+s)\hat{g}. 
\end{equation*}
Observe that in case $y_0$ belongs to the interior of $\mathcal{V}$ one can just consider the whole ball of radius $(\eps-\delta)$ around $y_0$, which is included in $\mathcal{V}$ for $\eps$ and $\delta$ small enough. 

Our goal is to study the limit as $\delta$ goes to zero of the product between $B^+(y_0, \eps-\delta)$ and the interval $(T_x-\eps -\delta, T_x-\delta]$ endowed with the metric $g_{\delta})$. Then we rescale the metric by a factor $\eps^{-2}$ and pass to the limit as $\eps$ goes to zero. This will give a subset of the tangent cone at $x_0$ and will allow to deduce further information on its geometry. If we consider the interval $(T_x-\eps-\delta, T_x-\delta]$ is because the isometry $E$ is defined until $T_x$, and therefore we have information on the metric only in the regular part of $\mathcal{W}$, which precedes the point $x_0$. 

As $\delta$ goes to zero, the metric $g_{\delta}$ on $[0,\eps-\delta)\times \mathbb{B}^+_{y_0}$ converges in $C^{\infty}$ to the metric $ds^2+\cos^2(T+s)\hat{g}$ on $[0,\eps)\times \mathbb{B}^{+}_{y_0}$. This limit is in particular a Gromov-Hausdorff limit. If we consider the changes of coordinates $s=\eps r$ and $\rho = \eps \tau$ for $r, \tau \in [0,1)$, it is easy to see that $[0,\eps)\times \mathbb{B}^+_{y_0}$ endowed with the rescaled metric $\eps^{-2}g$ converges in the Gromov-Hausdorff sense to:
\begin{equation*}
H=[0,1) \times [0,1) \times \s^{n-2}_+
\end{equation*}
endowed with the metric:
\begin{equation*}
dr^2+d\tau^2+\tau^2d\sigma_{n-2}^+.
\end{equation*}
As a consequence the tangent cone $C_{[0, 1)}(S_{x_0})$ includes a subset isometric to $H$. Since the convergence is a pointed Gromov-Hausdorff convergence and preserve the base point $x_0$, a subset $H_0$ isometric  to the product $\R_+\times \R_+ \times \s^{n-2}_+$ is included in $C(S_{x_0})$.

Recall that in $X$ the variable $r$ was chosen to be equal to $s/\eps$, where $s$ is the distance between a point and $x_0$ along the geodesic $\gamma_{y_0}$. There exists a limit for $\gamma_{y_0}$ in the tangent cone which is a minimizing geodesic $\gamma_0$ in $C(S_{x_0})$ starting from the vertex $x_0$. Since $\gamma_{y_0}$ can be continued until $N$, the minimizing limit geodesic $\gamma_0$ is defined on the whole $\R$ and it connects the vertex $x_0$ with two antipodal points in $S_{x_0}$. As a consequence, in the splitting $\R\times C(Y)$ of the tangent cone $C(S_{x_0})$, the first coordinate is the opposite of the Busemann function $B_{\gamma_0}$ associated to $\gamma_0$. Now, when we look at $r$ in this limit of $s/\eps$ as $\eps$ tends to zero, it is possible to show that $r$ on $H_0$ coincides with $-B_{\gamma_0}$, that is:
\begin{equation*}
r(x)=\lim_{t \rightarrow +\infty} (t-\mbox{d}_{C(S_{x_0})}(x, \gamma_0(t))).  
\end{equation*} 
Indeed we have the following:
\begin{align*}
r(\cdot) & = \lim_{\eps \rightarrow 0} \frac{s(\cdot)}{\eps} = \lim_{\eps \rightarrow 0} \left( - \frac{T_x-s(\cdot)-T_x}{\eps}\right)=\\
&=\lim_{\eps \rightarrow 0} \left( - \frac{\dist(y_0, \gamma_{y_0}(T_x-s(\cdot))-\dist(y_0, x_0)}{\eps} \right). 
\end{align*}
Now observe that $\eps^{-1}\dist(y_0, x_0)$ tends to infinity, and to the distance in the tangent cone $C(S_{x_0})$ from the vertex $x_0$. The geodesic $\gamma_{y_0}$ converges to the limit geodesic $\gamma_0$, and therefore we get: 
\begin{equation*}
r(\cdot)= \lim_{t \rightarrow +\infty} -\left( \mbox{d}_{C(S_{x_0})}(\cdot, \gamma_0(t))- \mbox{d}_{C(S_{x_0})}(\cdot, x_0) \right)= \lim_{t \rightarrow +\infty} -\left( \mbox{d}_{C(S_{x_0})}(\cdot, \gamma_0(t))-t \right). 
\end{equation*}
We have shown that $r$ coincides with minus the Busemann function of the minimizing geodesics $\gamma_0$, and therefore we can extend it from $H_0 \cong \R_+\times\R_+\times \s^{n-2}_+$ to the whole tangent cone $C(S_{x_0})$. Moreover, recall that $B_{\gamma_{0}}$ is onto on $\R$, and so it is the extension of $r$. Therefore, the tangent cone $C(S_{x_0})$ includes a subset which is isometric to $\R\times \R_+\times \s^{n-2}_+$, which is isometric in turns to  $\R_+ \times \R^{n-1}$ endowed with the product metric. We also know that $C(S_{x_0})$ is isometric to $\R^{m}\times C(Y_0)$. Then the previous discussion shows that $m$ must be equal to $(n-1)$ and $C(Y_0)$ is a stratified space of dimension 1 without boundary. The only possible choice for $C(Y_0)$ is that it is a line $\R$. Therefore we have proven that the tangent cone at $x_0$ is isometric to  $\R^n$ and that $x_0$ must belong to the regular set of $X$.


As a consequence, for any point in $\mathcal{V}$ the flow of $\nabla f$ is defined on the interval $[0, \pi/2)$, and since the above discussion is independent of the choice of $x$ in $\Gamma_0$ we can define the isometry $E$ on the product $\Gamma_0 \times [0, \pi/2)$:
\begin{align*}
E: \left(\Gamma_0 \times \left[0, \frac{\pi}{2} \right), E^*g\right) & \rightarrow \left(f^{-1}\left(\left[ 0, \frac{\pi}{2} \right) \right) \cap \Omega, g \right), \\
E(x,t) &= \gamma_x(t). 
\end{align*} 
We can also extend $E$ to the closed interval, as we did above, by defining:
\begin{equation*}
\hat{E}\left(x,\frac{\pi}{2}\right)=\lim_{t\rightarrow \frac{\pi}{2}}E(x,t)
\end{equation*}
Observe that for each $x$ in $\Gamma_0$ the endpoint of $\gamma_x$ is a point at distance $\pi$ from $P$, but it is not necessarily the same point for all $x$ in $\Gamma_0$. But thanks to the fact that $E^*g$ is a warped product metric, the image of $\Gamma_0\times \{\pi/2\}$ via $\hat{E}$ consists of only one point. In fact, consider a curve $\gamma$ in $\Gamma_0$ of length $L$ with respect to $g$. For any $t \in [0,\pi/2)$ the length of $\gamma \times \{t\}$ in $\Gamma_0 \times [0, \pi/2]$ endowed with the metric $E^*g$ is equal to $\cos^2(t)L$, and since $E$ is an isometry we have:
\begin{equation*}
L_g(\hat{E}(\gamma,t)) = \cos^2(t) L_g(\gamma) \leq L_g(\gamma). 
\end{equation*}
As a consequence, when $t$ is equal to $\pi/2$, the length of the image via $\hat{E}$ of $\gamma \times \{\pi/2\}$ is equal to zero, which means that $\hat{E}(\Gamma_0, \pi/2)$ has diameter equal to zero, and therefore it consists of only one point at distance $\pi$ from $P$. We denote again this point as $N$. 

We have obtained an isometry $\hat{E}$:
\begin{equation*}
\hat{E}: \left(\Gamma_0 \times \left[0, \frac{\pi}{2}\right], E^*g \right) \rightarrow \left( \left(f^{-1}\left(\left[ 0, \frac{\pi}{2} \right] \right) \cap \Omega \right) \cup \{ N \}, g \right). 
\end{equation*}

The same argument can be repeated for negative values of $t$, in order to show that for any $x$ in $\Gamma_0$ the geodesic flow of $\nabla f$ exists for $t\in \left( -\frac{\pi}{2},0\right]$ and does not intersect the singular set between $x$ and $P$. Then we have an isometry $\hat{E}$:
\begin{equation*}
\hat{E}: \left( \Gamma_0 \times \left[-\frac{\pi}{2},\frac{\pi}{2} \right], E^*g=dt^2+\cos^2(t)\hat{g}\right) \rightarrow (\Omega \cup \{P,N\},g). 
\end{equation*}
\newline
\emph{Step 3}. We finally prove that the metric completion of $\Gamma_0$ with respect to the metric \mbox{$E^*g= dt^2+\cos^2(t)\hat{g}$} is a stratified space. This is done by studying the geometry of the tangent cone at $P$. Consider $\eps > 0$ and a neighbourhood $B(P, \eps)$ of $P$. The isometry $\hat{E}$ restricts to an isometry between:
\begin{equation*}
\left[-\frac{\pi}{2}, -\frac{\pi}{2}+\eps\right) \times \Gamma_0 \rightarrow (B(P,\eps) \cap \Omega) \cup \{P\} = B(p,\eps)^{\mbox{\tiny{reg}}} \cup\{P\}.
\end{equation*}
If we consider the pointed Gromov-Hausdorff limit of $(B(P,\eps)^{\mbox{\tiny{reg}}} \cup\{P\}, P, \eps^{-2}g)$ for $\eps$ going to zero, the definition of the tangent cone and the fact that the convergence of the metrics is uniform in the Lipschitz topology ensure that we obtain the cone $(C(S_P^{\mbox{\tiny{reg}}}), ds^2+s^2h_P)$, where $(S_P, h_P)$ is the tangent sphere at $P$.

We can consider as well the limit for $\eps$ going to zero of:
\begin{equation}
\label{truc}
\left( \left[-\frac{\pi}{2}, -\frac{\pi}{2}+\eps\right) \times \Gamma_0, \hat{E}^{-1}(P), \eps^{-2}(dt^2+\cos^2(t) \hat{g}) \right)
\end{equation}
We change the variable $t=-\pi/2 +s\eps$ and by taking the Taylor expansion of sine in $0$ we obtain: 
\begin{equation*}
\eps^{-2}(\eps^2ds^2+\sin^2(\eps s)\hat{g}) \rightarrow ds^2 +s^2 \hat{g} \quad \mbox{ as } \eps \rightarrow 0.
\end{equation*}
Therefore the pointed Gromov-Hausdorff limit of $\eqref{truc}$ as $\eps$ goes to zero is the cone \mbox{$( C(\Gamma_0), 0, ds^2+s^2\hat{g})$}, where we denoted with $0$ its vertex. Moreover, the convergence of the metrics is uniform in $C^{\infty}$ on the regular sets. But we know that the tangent cone is unique, and therefore the cones $C(\Gamma_0)$ and $C(S_{P}^{\mbox{\tiny{reg}}})$ with the respective metrics must be isometric. Moreover, the convergence is in the Gromov-Hausdorff sense for pointed length spaces, and then preserve the base point. Therefore the isometry must also send the vertex of $C(\Gamma_0)$, which is the limit of $\hat{E}^{-1}(P)$, to the one of $C(S_P^{\mbox{\tiny{reg}}})$, which is the limit of $P$. As a consequence, since both $s$ and $r$ are the distances from the vertices of the cones, each slice $\{t\}\times \Gamma_0$ in $C(\Gamma_0)$ is isometric to the slice $\{t\}\times S_P^{\mbox{\tiny{reg}}}$. We have then shown that $\Gamma_0$ is isometric to the regular part of the tangent sphere $S_P^{\mbox{\tiny{reg}}}$. Hence if we take the metric completion $\hat{X}$ of $\Gamma_0$ with respect to $g$, $\hat{X}$ is an admissible stratified space of dimension $(n-1)$ isometric to the tangent sphere $S_P$ at $P$. We can extend $\hat{E}$ to $\hat{X}$ and get:
\begin{equation*}
\hat{E}: \left[-\frac{\pi}{2}, \frac{\pi}{2}\right]\times \hat{X} \rightarrow X. 
\end{equation*}
The image of $\hat{E}$ is a compact set in $X$ including the regular set $\Omega$ without at most two points. This latter is dense in $X$: therefore the image of $\hat{E}$ coincides with the whole $X$, $\hat{E}$ is surjective and it is the isometry we were looking for. 
\end{proof}


\section{A relation with the Yamabe problem}

We briefly recall here some of the basic notions about the Yamabe problem. Given a compact smooth Riemannian manifold $(M^n,g)$ of dimension $n \geq 3$, we define the conformal class of $g$ as:
\begin{equation*}
[g]=\left\{ \tilde{g}=e^{2u}g, u \in C^{\infty}(M)\right\}. 
\end{equation*}
The question posed by H.~Yamabe in 1960 was the following: does a metric with constant scalar curvature exist within the conformal class of a given Riemannian metric? The answer has been proven to be positive thanks to the works of N.~Trudinger, T.~Aubin and R.~Schoen. 

There is a classical variational formulation for the Yamabe problem. Consider the Hilbert-Einstein functional:
\begin{equation*}
Q(\tilde{g})=\frac{\displaystyle \int_M a_n S_{\tilde{g}}dv_{\tilde{g}}}{\vol_{\tilde{g}}(M)^{\frac{n-2}{n}}} \quad a_n=\frac{n-2}{4(n-1)},
\end{equation*}
and its infimum, the Yamabe constant:
\begin{equation*}
Y(M, [g]) = \inf_{\tilde{g}\in [g]} Q(\tilde{g}). 
\end{equation*}
If there exists a conformal metric $\tilde{g}$ attaining the Yamabe constant (and thus a critical point of the Hilbert-Einstein functional in the conformal class), then $\tilde{g}$ has constant scalar curvature and it is called a Yamabe metric. Observe that a metric of constant scalar curvature is not necessarily a Yamabe metric, since it is not necessarily minimizing. Nevertheless, M.~Obata proved in \cite{Obata71} that if $M^n$ carries an Einstein metric $g$ which satisfies $Ric_g=(n-1)g$, then $g$ is a Yamabe metric. Furthermore if there exists another conformal metric $\tilde{g}$ in the conformal class $[g]$, with constant scalar curvature and not homothetic to $g$, then $\tilde{g}$ is an Einstein metric as well and $(M^n,g)$ is isometric to the canonical sphere. The proof of this result is based on the existence of a conformal vector field $X$ on $(M^n,g)$. For other formulations of  the proof, see also Theorem IV.2 in \cite{BourguignonEzin} and Proposition 1.4 in \cite{Schoen87}. 

In \cite{ACM12} the authors studied the Yamabe problem on stratified spaces with the same variational approach as above, provided that the scalar curvature satisfies the appropriate integrability condition. They gave an existence result for a Yamabe metric which depends on a conformal invariant, called local Yamabe constant. In \cite{Mondello} we computed this latter under a geometric assumption on the links. We prove here a result analogous to the one of \cite{Obata71} for admissible stratified spaces: 

\begin{theorem}
\label{ObataYamabe}
Let $(X^n, g)$ be an admissible stratified space with Einstein metric. Then $g$ is a Yamabe metric. If there exists $\tilde{g}$ in the conformal class of $g$, not homothetic to $g$, with constant scalar curvature, then $\tilde{g}$ is an Einstein metric as well and $(X^n,g)$ is isometric to the spherical suspension of an Einstein admissible stratified space of dimension $(n-1)$. 
\end{theorem}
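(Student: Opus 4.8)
The plan is to follow the classical strategy of M.~Obata, adapted to the stratified setting, exploiting the singular Obata theorem (Theorem \ref{ObataSing}) in place of its smooth counterpart. First I would show that an Einstein metric $g$ on an admissible stratified space is a Yamabe metric. Since $g$ is Einstein with $Ric_g=(n-1)g$ (after normalising the volume and scaling), the first non-zero eigenvalue of the Laplacian equals $n$ by the singular Lichnerowicz theorem, and the argument of D.~Bakry together with Theorem \ref{BakryS} shows that the Sobolev inequality \eqref{SobP} is saturated precisely by constant-scalar-curvature conformal metrics. Assuming a Yamabe minimiser $\tilde{g}=u^{4/(n-2)}g$ exists, I would compare the Hilbert-Einstein functional $Q(\tilde{g})$ with $Q(g)$ using the sharp Sobolev inequality to conclude that $g$ itself attains the Yamabe constant, so that $g$ is a Yamabe metric.

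For the rigidity part, I would write $\tilde{g}=u^{4/(n-2)}g$ with $u>0$ a smooth positive function on $\Omega$ satisfying the Yamabe equation
\begin{equation*}
a_n^{-1}\Delta_g u + S_g\, u = S_{\tilde{g}}\, u^{\frac{n+2}{n-2}},
\end{equation*}
where $S_g=n(n-1)$ is the constant scalar curvature of the Einstein metric and $S_{\tilde{g}}$ is constant by hypothesis. The classical computation of M.~Obata expresses the traceless Hessian of a suitable power of $u$ in terms of the traceless Ricci tensor of $g$; since $g$ is Einstein, its traceless Ricci tensor vanishes on $\Omega$, and one obtains on the regular set an equation of the form $\nabla d\phi=\frac{1}{n}(\Delta_g\phi)g$ for an appropriate function $\phi$ built from $u$. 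In other words, $\phi$ is a concircular scalar field, so that after rescaling $\phi$ becomes an eigenfunction of $\Delta_g$ with eigenvalue $n$ satisfying the Hessian equation \eqref{hessian}. This forces $\lambda_1(X)=n$, and Theorem \ref{ObataSing} then yields that $(X^n,g)$ is isometric to the spherical suspension of an admissible stratified space $(\hat{X}^{n-1},\hat{g})$; the warped-product structure of the suspension together with the Einstein condition shows that $\hat{g}$ is itself Einstein. Finally, the same Hessian equation shows that $\tilde{g}$ has parallel traceless Ricci tensor and hence is Einstein as well.

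The main obstacle, and the step requiring the most care, is the justification of the integration-by-parts and regularity arguments needed to pass from the smooth Obata computation on $\Omega$ to a global conclusion on $X$. The classical proof integrates the squared norm of the traceless Hessian against a suitable weight and uses the divergence theorem to show it vanishes; in the stratified setting this demands control of boundary terms near the singular set $\Sigma$. Here I would rely on the same cut-off technique $\rho_{\eps}$ used in the proof of the singular Lichnerowicz theorem, showing that the boundary integrals over tubular neighbourhoods of the strata vanish as $\eps\to 0$, using that $u$ and its derivatives are controlled near $\Sigma$ (the gradient of an eigenfunction is bounded by Corollary 2.12 in \cite{Thesis}, and analogous elliptic regularity applies to solutions of the Yamabe equation). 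Once the traceless Hessian of $\phi$ is shown to vanish in $L^2(\Omega)$, the remaining steps are formal: the resulting concircular field gives an eigenfunction for $n$, Theorem \ref{ObataSing} provides the isometry, and the Einstein property of $\hat{g}$ and $\tilde{g}$ follows from the explicit warped-product form of the metric.
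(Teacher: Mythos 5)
Your proposal is correct in substance, and its central step coincides with the paper's: the assertion that $\tilde{g}$ is Einstein and that the conformal factor solves the concircular equation \eqref{csf} is proved in the paper (Theorem \ref{Obata}, following an argument of Viaclovsky) by exactly the integration-by-parts-with-cut-offs scheme you describe, using that $E_{\tilde{g}}$ is divergence-free (Bianchi identity, since $S_{\tilde{g}}$ is constant) together with the regularity of the Yamabe minimizer ($u$ positive, bounded, with bounded gradient; Lemma \ref{reg1}). Where you genuinely diverge is in the two remaining steps. For ``$g$ is a Yamabe metric'' you apply the sharp Sobolev inequality \eqref{SobP} to the conformal factor, which is how the Proposition quoted from \cite{Mondello} settles that claim; the paper's own section instead gives an alternative proof by forming the affine family $\phi_t=(1-t)\phi+t$ of solutions of \eqref{csf} and showing that $\vol_{g_t}(X)$ is constant in $t$. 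To produce the eigenfunction with eigenvalue $n$ you take the classical Obata route: contracting the commutation (Ricci) identity with \eqref{csf} and $Ric_g=(n-1)g$ gives $d(\Delta_g\varphi-n\varphi)=0$ on $\Omega$, so $\phi$ plus a suitable \emph{additive} constant (not a rescaling, as you write) is a non-constant eigenfunction; the paper instead differentiates the Yamabe equation along the family $\phi_t$ and gets the eigenfunction as $\dot{v}_0$, which is the same computation that yields the volume constancy -- i.e.\ the paper's family argument does double duty, whereas your route is more direct but needs the Sobolev-based argument separately. Both are valid.

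One caution on your write-up rather than on the mathematics. The sentence ``since $g$ is Einstein, its traceless Ricci tensor vanishes on $\Omega$, and one obtains on the regular set an equation of the form $\nabla d\phi=\frac{1}{n}(\Delta_g\phi)g$'' is misleading: the transformation law reads $E_{\tilde{g}}=E_g+(n-2)\phi^{-1}\bigl(\nabla d\phi+\frac{\Delta_g\phi}{n}g\bigr)$, so with $E_g=0$ the Hessian equation is \emph{equivalent} to $E_{\tilde{g}}=0$, which is precisely what the integral argument must establish -- it does not follow pointwise from the Einstein condition on $g$ alone. Your third paragraph recognizes this, so there is no gap, but the statement should be corrected; likewise your closing claim that ``the same Hessian equation shows that $\tilde{g}$ has parallel traceless Ricci tensor and hence is Einstein'' is redundant, since the vanishing of $E_{\tilde{g}}$ is the direct output of the integral argument, not a further consequence of it.
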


Observe that we have proven in \cite{Mondello} that the Sobolev inequality $\eqref{SobP}$ implies a lower bound for the Yamabe constant of an admissible stratified space, which is attained when the metric is Einstein: 

\begin{prop}
Let $(X^n,g)$ be an admissible stratified space. Then its Yamabe constant satisfies:
\begin{equation*}
Y(X,[g]) \geq \frac{n(n-2)}{4} \vol_g(X)^{\frac{2}{n}},
\end{equation*}
with equality if $g$ is an Einstein metric. 
\end{prop}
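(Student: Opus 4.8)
The plan is to treat the two assertions separately: the first follows at once from the Yamabe lower bound already recalled, while the second is an Obata-type argument built on an integral identity and on the singular Obata theorem \ref{ObataSing}. For the first assertion I would simply compute the Yamabe quotient of the Einstein metric. Normalizing $Ric_g=(n-1)g$ we have $S_g=n(n-1)$, so since the scalar curvature is constant
\[
Q(g)=a_n S_g\,\vol_g(X)^{2/n}=\frac{n-2}{4(n-1)}\cdot n(n-1)\cdot\vol_g(X)^{2/n}=\frac{n(n-2)}{4}\vol_g(X)^{2/n}.
\]
By the Proposition this is exactly the lower bound for $Y(X,[g])$, attained precisely in the Einstein case; hence $g$ realizes the infimum and is a Yamabe metric. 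If one prefers to assume only that a Yamabe minimizer $\tilde g$ exists, the same conclusion follows a posteriori from the rigidity below: either $\tilde g$ is homothetic to $g$, or the second part of the argument applies.

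For the second assertion, write $\tilde g=u^{4/(n-2)}g$ with $u>0$ non-constant, and set $\psi=u^{2/(n-2)}$, so that $\tilde g=\psi^2 g$. First I would record the regularity of $u$: as $u$ solves the Yamabe equation $\Delta_g u=F(u)$ with $F$ locally Lipschitz, the regularity consequence of the singular Lichnerowicz theorem (Corollary 2.12 in \cite{Thesis}) gives $u\in W^{2,2}(X)$ with bounded gradient, and elliptic regularity makes $u$ smooth and positive on $\Omega$, continuous and bounded between two positive constants on $X$. The heart of the matter is the conformal identity, valid on $\Omega$,
\[
Ric_0(\tilde g)=-(n-2)\,\psi^{-1}(\nabla^2_{\tilde g}\psi)_0,
\]
where $Ric_0$ is the trace-free Ricci tensor and $(\cdot)_0$ the trace-free part, all computed for $\tilde g$; this comes from the standard conformal transformation law of the trace-free Ricci tensor together with the Einstein condition $Ric_0(g)=0$. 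Since $\tilde g$ has constant scalar curvature, the contracted second Bianchi identity gives $\mathrm{div}_{\tilde g}Ric_0(\tilde g)=0$ on $\Omega$, and pairing the identity with $Ric_0(\tilde g)$ and integrating by parts should yield
\[
\int_X \psi\,|Ric_0(\tilde g)|^2_{\tilde g}\,dv_{\tilde g}=-(n-2)\int_X \langle Ric_0(\tilde g),\nabla^2_{\tilde g}\psi\rangle\,dv_{\tilde g}=(n-2)\int_X \langle \mathrm{div}_{\tilde g}Ric_0(\tilde g),d\psi\rangle\,dv_{\tilde g}=0.
\]
As $\psi>0$, this forces $Ric_0(\tilde g)=0$, i.e. $\tilde g$ is Einstein.

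Once $\tilde g$ is Einstein no further integration is needed. Applying the same conformal law with $g=\chi^2\tilde g$, $\chi=\psi^{-1}=u^{-2/(n-2)}$, and using $Ric_0(g)=0$ together with $Ric_0(\tilde g)=0$, one reads off $(\nabla^2_g\chi)_0=0$; that is, $\chi$ is a non-constant concircular scalar field for $g$. Writing $\nabla^2_g\chi=\rho\,g$, differentiating and contracting against $Ric_g=(n-1)g$ forces $\rho=-(\chi-c)$ for a constant $c$, so $\varphi:=\chi-c$ satisfies $\nabla^2_g\varphi=-\varphi g$, hence $\Delta_g\varphi=n\varphi$ with $\varphi$ non-constant. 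By the singular Lichnerowicz theorem $\lambda_1(X)\geq n$, so $\lambda_1(X)=n$, and Theorem \ref{ObataSing} gives that $(X,g)$ is isometric to the spherical suspension $[-\pi/2,\pi/2]\times\hat X$ of an admissible stratified space $(\hat X,\hat g)$ of dimension $(n-1)$, with metric $dt^2+\cos^2(t)\hat g$. Finally, since $g$ is Einstein with $Ric_g=(n-1)g$, the warped-product Einstein relation forces $Ric_{\hat g}=(n-2)\hat g$, so $\hat X$ is an Einstein admissible stratified space, as claimed.

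The main obstacle is making the integration by parts on the singular space rigorous, i.e. justifying the vanishing of $\int_X\psi\,|Ric_0(\tilde g)|^2\,dv_{\tilde g}$. As in the proof of the singular Lichnerowicz inequality $\eqref{LichIn}$, I would insert the cut-off functions $\rho_\eps$ that vanish in a tubular neighbourhood of $\Sigma$ and equal one elsewhere, integrate by parts on $\{\rho_\eps=1\}$, and control the error terms carrying $d\rho_\eps$ using the bound on $|\nabla u|$, the integrability of $Ric_0(\tilde g)$ coming from the $W^{2,2}$ regularity of $u$, and the estimate on the volume of tubular neighbourhoods of the strata. Establishing $\mathrm{div}_{\tilde g}Ric_0(\tilde g)=0$ in this weak/cut-off sense and verifying that the weight $\psi$ and the tensor norms remain integrable up to the singular set is where the genuine work lies; the conformal algebra and the warped-product computation are formal.
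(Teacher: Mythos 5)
Your computation of the Yamabe quotient of the Einstein metric is correct and is exactly what the paper does for the equality assertion: with $Ric_g=(n-1)g$ one has $S_g=n(n-1)$ and $Q(g)=a_nS_g\vol_g(X)^{2/n}=\frac{n(n-2)}{4}\vol_g(X)^{2/n}$. But this only produces the upper bound $Y(X,[g])\leq\frac{n(n-2)}{4}\vol_g(X)^{2/n}$ in the Einstein case; the first assertion of the Proposition --- the lower bound, valid for \emph{every} admissible stratified space, Einstein or not --- is never proved in your text. Writing ``by the Proposition this is exactly the lower bound'' is circular: you invoke the statement under proof. The missing step is short but essential, and it is where the Sobolev inequality $\eqref{SobP}$ of Theorem \ref{BakryS} enters: for $p=\frac{2n}{n-2}$ one has $1-\frac{2}{p}=\frac{2}{n}$ and $\frac{p-2}{n}=\frac{4}{n(n-2)}$, so $\eqref{SobP}$ rescales to $\frac{n(n-2)}{4}\vol_g(X)^{2/n}\norm{u}_p^2\leq\norm{du}_2^2+\frac{n(n-2)}{4}\norm{u}_2^2$; since $Ric_g\geq(n-1)g$ forces $S_g\geq n(n-1)$, i.e. $a_nS_g\geq\frac{n(n-2)}{4}$, the Yamabe functional of any conformal metric $u^{4/(n-2)}g$, namely $\left(\int_X(|du|^2+a_nS_gu^2)\,dv_g\right)/\norm{u}_p^2$, is bounded below by $\frac{n(n-2)}{4}\vol_g(X)^{2/n}$. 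This is the argument from \cite{Mondello} that the paper explicitly relies on; without it (or at least an explicit citation) your proof of the Proposition has a genuine hole, and computing $Q(g)$ for the Einstein metric cannot substitute for it, since the inequality must hold with no Einstein hypothesis.

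The entire second half of your proposal --- the trace-free Ricci conformal identity, the Bianchi identity, the concircular field, and the appeal to Theorem \ref{ObataSing} --- proves a different statement: it is a sketch of the paper's Theorem \ref{Obata} and Corollary \ref{rigidité}, whereas this Proposition contains no rigidity assertion at all. Your sketch does parallel the paper's later proofs (including the correctly identified need to justify the integrations by parts with the cut-offs $\cutoff$ and the gradient bounds of Lemma \ref{reg1}), but your suggested fallback --- that if one only assumes a Yamabe minimizer exists, the conclusion ``follows a posteriori from the rigidity'' --- does not close the gap either: knowing that $\tilde g$ is Einstein and that $X$ is a spherical suspension does not by itself compare $Q(\tilde g)$ with $Q(g)$. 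In the paper this step requires the separate volume-constancy argument of Corollary \ref{YamabeSimpleEdges}: along the family $\phi_t=(1-t)\phi+t$ of solutions of $\eqref{csf}$, the derivative $\dot v_0$ is an eigenfunction for the eigenvalue $n$, hence has zero mean, so $\vol_{g_t}(X)$ is constant in $t$ and the Einstein metric attains the Yamabe constant. Without either the Sobolev route or this eigenfunction argument, neither claim of the Proposition is actually established by your text.
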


In fact, it suffices to compute $Q(g)$ for an Einstein metric to get exactly the right-hand side in the previous inequality. Therefore, we have already proven the first part of Theorem \ref{ObataYamabe}, that is an Einstein metric on an admissible stratified space is a Yamabe metric. We give here an alternative proof under the assumption that a non trivial Yamabe minimizer exists, that means, there exists a non trivial solution $u \in W^{1,2}(X) \cap L^{\infty}(X)$ to the Yamabe equation:

\begin{equation*}
\Delta_g u+a_n S_gu=a_nS_{\tilde{g}}u^{\frac{n+2}{n-2}}, \quad a_n=\frac{n-2}{4(n-1)}.
\end{equation*}

The transformation laws for the scalar curvature under conformal change (see Chapter 1, Section J in \cite{Besse}) imply that for a solution $u$ to the previous equation, the metric $\tilde{g}=u^{\frac{4}{n-2}}g$ is a Yamabe metric. Observe that assuming the existence of $\tilde{g}$ in the conformal class of $g$, not homothetic to $g$ and with constant scalar curvature, is equivalent to say that there exists a non-trivial solution $u$ to the Yamabe equation and that $\tilde{g}$ can be written as $u^{\frac{4}{n-2}}g$. Moreover, we can assume without loss of generality that the scalar curvature $S_{\tilde{g}}$ of $\tilde{g}$ is equal to $S_g$.  

We divide the proof of Theorem \ref{ObataYamabe} into two steps: first we prove that if $\tilde{g}$ is a metric conformal to $g$, not homothetic to $g$, with constant scalar curvature, then $\tilde{g}$ is an Einstein metric. This implies the existence of a conformal vector field on an admissible stratified space.  We partially follow an argument of J. Viaclovsky (see the proof of Theorem 1.3 in \cite{Viaclovsky}). We then give the alternative proof of the fact that an Einstein metric is a Yamabe metric: the main interest of the proof is that it shows the existence of an eigenfunction relative to the eigenvalue $n$. As a consequence we can conclude by applying the rigidity result of Theorem \ref{ObataSing}. 

\begin{theorem}
\label{Obata}
Let $(X,g)$ be an admissible Einstein stratified space of dimension $n$. Assume that there exists a metric $\tilde{g}$ in the conformal class of $g$, not homothetic to $g$, with constant scalar curvature. Then $\tilde{g}$ is an Einstein metric and there exists a function $\phi$ satisfying:
\begin{equation}
\label{csf}
\nabla d\phi =-\frac{\Delta_g \phi}{n}g.
\end{equation}
In particular, the vector field $X=d\phi$ is a conformal vector field such that $\mathcal{L}_X g=-2\phi g$.  
\end{theorem}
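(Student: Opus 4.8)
The plan is to show that the trace-free part of the Hessian of a suitable power of the conformal factor vanishes identically; once this is established, both the Einstein property of $\tilde{g}$ and the concircular equation \eqref{csf} follow immediately, and the conformal vector field is obtained by a final normalisation. Write $\tilde{g}=u^{4/(n-2)}g$, where $u\in W^{1,2}(X)\cap L^{\infty}(X)$ is the positive solution of the Yamabe equation associated to $\tilde g$; $u$ is non-constant because $\tilde g$ is not homothetic to $g$. Set $\phi:=u^{-2/(n-2)}$, a power of the conformal factor (positive since $u>0$). Using the conformal transformation law for the Ricci tensor on the regular set $\Omega$ together with the Einstein condition $\mathrm{Ric}_g=(n-1)g$ (so that the trace-free Ricci $\mathrm{Ric}^{0}_g$ vanishes), one finds that the trace-free Ricci tensor of $\tilde g$ is a constant multiple of the trace-free Hessian of $\phi$: setting $T:=\nabla d\phi+\frac{\Delta_g\phi}{n}g$, which is trace-free since $\operatorname{tr}(\nabla d\phi)=-\Delta_g\phi$, one has $\mathrm{Ric}^{0}_{\tilde g}=\frac{n-2}{\phi}\,T$ on $\Omega$. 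The whole problem is thereby reduced to proving $T\equiv 0$.

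The crux is to establish $T\equiv 0$ by an integral argument, following the scheme of Viaclovsky's proof of Theorem~1.3 in \cite{Viaclovsky}. Since $\tilde g$ has constant scalar curvature, the contracted second Bianchi identity gives that $\mathrm{Ric}^{0}_{\tilde g}$ is divergence-free on $\Omega$. I would then integrate a weighted norm of $\mathrm{Ric}^{0}_{\tilde g}$ — equivalently of $\phi^{-1}T$ — over $X$, integrating by parts to move a derivative onto $\mathrm{Ric}^{0}_{\tilde g}$ and exploiting its vanishing divergence. The extra curvature terms produced when commuting covariant derivatives, via the Ricci identity applied to $\operatorname{div}_g\nabla d\phi$, are precisely the ones controlled by the Einstein condition $\mathrm{Ric}_g=(n-1)g$; together with the fact that $\phi$ is a pure power of the conformal factor they combine so that every contribution cancels except a nonnegative multiple of $|T|^2$. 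The outcome is an identity of the form $\int_X w\,|T|^2\,dv_g=0$ with a strictly positive weight $w$ (a power of $\phi$), forcing $T\equiv 0$ on $\Omega$.

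The main obstacle is justifying these integrations by parts on a stratified space, where the error terms concentrate near the singular set $\Sigma$. This is handled exactly as in the proof of the singular Lichnerowicz theorem leading to \eqref{LichIn}. The regularity theory for solutions of a Schrödinger equation $\Delta_g u=F(u)$ with $F$ locally Lipschitz — which applies to the Yamabe equation — guarantees that $\phi$ lies in $W^{2,2}(X)$ and has bounded gradient, so that $T\in L^2$ and all the integrands are globally integrable. One then inserts the cut-off functions $\rho_{\eps}$, which vanish in a tubular neighbourhood of $\Sigma$ and equal one elsewhere, carries out the integration by parts on the regular set, and lets $\eps\to 0$. Because $\Sigma$ has codimension at least two, the energy $\int_X|d\rho_{\eps}|^2\,dv_g$ tends to zero, and the same uniform bounds on $\phi$ and its first and second derivatives that appear in the Lichnerowicz argument show that every error term involving $d\rho_{\eps}$ is negligible in the limit. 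This is the only place where the specific geometry of admissible stratified spaces — in particular the absence of a codimension-one stratum and the controlled behaviour near $\Sigma$ — enters the argument.

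Finally, $T\equiv 0$ means $\mathrm{Ric}^{0}_{\tilde g}=0$ on $\Omega$; since $\tilde g$ has constant scalar curvature, this says exactly that $\tilde g$ is Einstein, and moreover $\nabla d\phi=-\frac{\Delta_g\phi}{n}g$, which is \eqref{csf}. It remains to promote $\phi$ to an eigenfunction. Taking the divergence of \eqref{csf} on $\Omega$ and commuting derivatives with the Ricci identity, the Einstein condition $\mathrm{Ric}_g=(n-1)g$ yields $\nabla(\Delta_g\phi-n\phi)=0$, so $\Delta_g\phi-n\phi$ is constant (the regular set being connected, as $\Sigma$ has codimension at least two). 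Since adding a constant to $\phi$ does not change its Hessian and hence preserves \eqref{csf}, we may normalise $\phi$ so that $\Delta_g\phi=n\phi$. For this normalised $\phi$ one has $\mathcal{L}_{d\phi}g=2\nabla d\phi=-2\frac{\Delta_g\phi}{n}g=-2\phi g$, so $X=d\phi$ is the desired conformal vector field, non-trivial because $\phi$ is non-constant. This completes the proof.
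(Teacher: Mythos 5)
Your proposal is correct and follows essentially the same route as the paper: the same normalisation $\phi=u^{-2/(n-2)}$, the same reduction of the trace-free Ricci tensor $E_{\tilde g}$ of $\tilde g=\phi^{-2}g$ to the trace-free Hessian $T=\nabla d\phi+\frac{\Delta_g\phi}{n}g$ via the Einstein condition $E_g=0$, and the same Viaclovsky-style weighted integral of $|E_{\tilde g}|^2$ whose divergence term is killed by the Bianchi identity, with the integrations by parts justified near $\Sigma$ by the cut-off functions and the $W^{2,2}$ plus bounded-gradient regularity, exactly as in the paper's proof. Your closing step --- taking the divergence of \eqref{csf} and using $Ric_g=(n-1)g$ to get $\Delta_g\phi-n\phi$ constant on the connected regular set, then shifting $\phi$ by a constant so that $\Delta_g\phi=n\phi$ and hence $\mathcal{L}_{d\phi}g=-2\phi g$ --- is a correct detail that the paper's proof of this theorem leaves implicit.
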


Before proving this theorem we recall some results contained in \cite{Mondello} and in \cite{Thesis} in order to deduce some further regularity on a Yamabe minimizer: we are going to show that if $u$ solves the Yamabe equation, then it belongs to the Sobolev space $W^{2,2}(X)$ and its gradient is bounded. 

\begin{prop}
Let $(X^n, g)$ be a stratified space. Let $F$ be a positive locally Lipschitz function and $u \in W^{1,2}(X)\cap L^{\infty}(X)$ be a non-negative solution to the equation $\Delta_g u=F(u)$. Assume that there exists a positive constant $c$ such that: 
\begin{equation}
\label{LaplacienGrad}
\Delta_g |du| \leq c |du|. 
\end{equation}
If for any $x$ in $X$ the first non-zero eigenvalue of the Laplacian on the tangent sphere $\lambda_1(S_x)$ is larger than or equal to $(n-1)$, then for any $\eps > 0$ the following control of the gradient away from an $\eps$-tubular neighbourhood of the singular set $\Sigma$ holds:
\begin{equation}
\label{tubEst}
\norm{du}_{\linfty{X \setminus \tub{\eps}}} \leq C \sqrt{|\ln{\eps}|}.
\end{equation}
where $C$ is a positive constant not depending on $\eps$. 
\end{prop}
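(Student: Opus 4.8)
The plan is to treat $w:=|du|$ as a nonnegative subsolution of a Schrödinger-type inequality and to bound it by a Moser iteration, the whole difficulty being to carry the iteration up to distance $\eps$ from the singular set while losing only a logarithmic factor. The hypothesis $\eqref{LaplacienGrad}$ says exactly that $w\ge 0$ satisfies $\Delta_g w\le c\,w$ weakly on the regular set, and it presupposes $w\in W^{1,2}(X)$ (the ambient regularity of the solution $u\in W^{1,2}(X)\cap L^\infty(X)$ of $\Delta_g u=F(u)$, with $F(u)$ bounded since $F$ is locally Lipschitz and $u$ is bounded). Because there is no stratum of codimension one, every integration by parts can be legitimized against cut-off functions $\cutoff$ supported in $\Omega$ and vanishing near $\Sigma$, and the general Sobolev inequality $\norm{f}_{\p}^2\le A\norm{df}_2^2+B\norm{f}_2^2$, valid on any stratified space, is available as the engine of the iteration.

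First I would localize. For a regular point $p$ with $d=\dist(p,\Sigma)$ the ball $B(p,d/2)$ lies in $\Omega$ and, after rescaling the metric by $d^{-2}$, looks uniformly like a fixed smooth ball; the classical De~Giorgi--Nash--Moser local maximum principle for subsolutions of $\Delta_g w\le c\,w$ then gives
\begin{equation*}
w(p)^2\le \frac{C}{\vol_g(B(p,d/2))}\int_{B(p,d/2)}w^2\,dv_g,
\end{equation*}
with $C$ independent of $p$ and $d$ (the zeroth-order term contributes only the harmless factor $cd^2$). This reduces the whole problem to an estimate on the rescaled $L^2$-mean: it suffices to show this mean is bounded by $C\,|\ln d|$, uniformly as $d\to 0$.

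The role of the hypothesis $\lambda_1(S_x)\ge n-1$ is precisely to furnish the functional inequality controlling this mean near the singular set. Close to a stratum the metric is an $O(r^\alpha)$ perturbation of the model cone metric $dr^2+r^2h_x$ on $C(S_x)$, where $r=\dist(\cdot,\Sigma)$, and on such a cone $\lambda_1(S_x)\ge n-1$ is the exact threshold condition on the indicial roots of the model operator: it is what guarantees that the gradient of a bounded solution grows no worse than logarithmically as $r\to 0$, the logarithm appearing precisely at the borderline $\lambda_1(S_x)=n-1$. Quantitatively I would encode this as a scale-invariant Hardy/Poincaré inequality on the cones $C(S_x)$, use it together with the Caccioppoli estimate
\begin{equation*}
\int_X \cutoff^2|dw|^2\,dv_g\le C\int_X |d\cutoff|^2\,w^2\,dv_g+2c\int_X \cutoff^2\,w^2\,dv_g
\end{equation*}
obtained by testing $\eqref{LaplacienGrad}$ against $\cutoff^2 w$, and choose logarithmic cut-offs interpolating between $\tub{\eps^2}$ and $\tub{\eps}$, for which $\int_X|d\cutoff|^2\,dv_g\lesssim |\ln\eps|^{-1}$. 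Summing the resulting energy decay over dyadic scales down to $\eps$ produces the logarithmic growth of the $L^2$-mean, and combined with the local maximum principle it yields $\norm{du}_{\linfty{X\setminus\tub{\eps}}}\le C\sqrt{|\ln\eps|}$.

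The main obstacle is exactly this borderline accounting: turning the indicial computation on the exact cone into a quantitative, uniform logarithmic bound for the genuine (perturbed) metric. Concretely, one must check that the constant in the Hardy/Poincaré inequality near $\Sigma$ depends only on the lower bound $\lambda_1(S_x)\ge n-1$ and not on the point $x$ nor on the scale, absorb the $O(r^\alpha)$ deviation of $g$ from the exact cone metric into the estimate, and verify that the logarithmic gain survives summation over scales rather than accumulating into a power of $\eps$. A secondary but necessary point is the justification of all integrations by parts on $\Omega$ and of the passage to the limit in the tubular neighbourhoods, which rests on the density of $C^\infty_0(\Omega)$ in $W^{1,2}(X)$ and, again, on the absence of a codimension-one stratum.
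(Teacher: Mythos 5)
Your first step --- the rescaled De~Giorgi--Nash--Moser local maximum principle on $B(p,d/2)$, $d=\dist(p,\Sigma)$, reducing \eqref{tubEst} to the bound $\vol_g(B)^{-1}\int_B|du|^2\leq C|\ln d|$ --- is exactly the ``Moser iteration'' half of the paper's proof (which is by citation: Theorem A of \cite{ACM14} plus the iteration as in Proposition 1.15 of \cite{Thesis}). The gap is in the other half, and it is twofold. First, a circularity: you presuppose $|du|\in W^{1,2}(X)$ and treat the inequality \eqref{LaplacienGrad} as available for integrations by parts against cut-offs near $\Sigma$. At this stage of the paper only $u\in W^{1,2}(X)\cap L^{\infty}(X)$ is known; the membership $|du|\in W^{1,2}(X)$ is the content of Lemma \ref{reg1}, which is proved \emph{afterwards} and \emph{uses} the estimate \eqref{tubEst} to construct the cut-offs $\cutoff$ entering \eqref{gradientYamabe}. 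Worse, if your presupposition were legitimate your own tools would prove too much: a non-negative $w\in W^{1,2}(X)$ with $\Delta_g w\leq cw$ on $\Omega$ satisfies the inequality weakly on all of $X$ (test with $\cutoff\phi$; the error $\int\phi\,(dw,d\cutoff)\,dv_g$ vanishes since $dw\in L^2$ and $\norm{d\cutoff}_2\to 0$), and then a single global Moser iteration with the Sobolev inequality gives $\norm{w}_{\infty}\leq C\norm{w}_2$ --- no logarithm, no tubular neighbourhood. That the statement only claims $\sqrt{|\ln\eps|}$ growth should signal that the weak inequality across $\Sigma$ is precisely what is \emph{not} available here.

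Second, and more fundamentally, the differential inequality for $w=|du|$ alone cannot yield \eqref{tubEst}, so the spectral hypothesis must be injected through the equation for $u$ itself --- and your proposal never actually does this. With only ``$w\geq 0$, $w\in L^2(X)$, $\Delta_g w\leq cw$ on $\Omega$'' the conclusion is false: on a compact $3$-dimensional space with an isolated conical point, the Green-type function $w\approx r^{-1}$ is harmonic on $\Omega$ and lies in $L^2$ near the vertex, yet blows up like $\dist(\cdot,\Sigma)^{-1}$, whatever $\lambda_1(S_x)$ is. The paper's route through Theorem A of \cite{ACM14} uses $u$ bounded and $\Delta_g u=F(u)\in L^{\infty}$: comparison with harmonic functions on the tangent cone, where $\lambda_1(S_x)\geq n-1$ means the first non-trivial indicial root is $\geq 1$, gives the almost-Lipschitz oscillation decay $\mbox{osc}_{B(x,r)}\,u\leq Cr\sqrt{|\ln r|}$ (the square root arising at the borderline $\lambda_1(S_x)=n-1$), and Caccioppoli \emph{for $u$} then yields $\int_{B(p,d/2)}|du|^2\leq Cd^n|\ln d|$, the input your Step 1 needs. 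Your substitute --- a ``Hardy/Poincar\'e inequality on $C(S_x)$'' feeding a Caccioppoli estimate for $w$ with logarithmic cut-offs between $\tub{\eps^2}$ and $\tub{\eps}$ --- does not close: the required Hardy inequality $\int w^2 r^{-2}\,dv_g\lesssim\int|dw|^2dv_g+\int w^2dv_g$ fails near codimension-two strata (it is the two-dimensional Hardy inequality at a point of the cross-section, and its failure is exactly why the logarithm appears), the term $\int|d\cutoff|^2w^2\,dv_g$ is uncontrolled without the very $L^{\infty}$ bound you are proving, and the decisive estimate on the rescaled $L^2$-mean is asserted as the outcome of a dyadic summation that is never written and whose ingredients, as stated, bound energy of $w$ on smaller sets by mass on larger ones --- losing, not gaining, as one iterates inward.
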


This proposition is a consequence of Theorem A in \cite{ACM14} and of Moser iteration technique (see the proof of Proposition 1.15 in \cite{Thesis}). On a admissible stratified space $(X^n,g)$, the condition $\eqref{LaplacienGrad}$ is always satisfied thanks to the lower bound on the Ricci tensor and the Bochner-Lichnerowicz formula (see Proposition 2.3 in \cite{Thesis}). Furthermore, a Ricci lower bound on $(X^n,g)$ implies an analogous Ricci lower bound on each tangent sphere (Remark \ref{rem}): therefore, thanks to the Lichnerowicz singular theorem, the assumption on $\lambda_1(S_x)$ holds for any $x$ in an admissible stratified space. We can then reformulate the previous proposition as follows:

\begin{prop}
\label{reg}
Let $(X^n,g)$ an admissible stratified space and $u, F$ as in the previous statement. Then for any $\eps > 0$ the estimates $\eqref{tubEst}$ holds on the gradient $|\nabla u|$. 
\end{prop}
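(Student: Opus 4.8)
The plan is to obtain Proposition~\ref{reg} as a direct specialisation of the general proposition stated just above: on an admissible stratified space both of its hypotheses hold automatically, so the gradient bound \eqref{tubEst} follows by simply invoking that result. Thus I would organise the argument as the verification of the two hypotheses, after which nothing further is needed.

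First I would establish the differential inequality \eqref{LaplacienGrad}, that is $\Delta_g|du|\leq c|du|$ for some constant $c>0$, on the regular set $\Omega$. The tool is the Bochner--Lichnerowicz formula for a function $u$ solving $\Delta_g u=F(u)$, which with the sign convention of this paper reads
\begin{equation*}
\tfrac12\,\Delta_g|du|^2=(d\Delta_g u,du)_g-|\nabla du|^2-Ric_g(\nabla u,\nabla u).
\end{equation*}
Rewriting the left-hand side as $|du|\,\Delta_g|du|-\bigl|\nabla|du|\bigr|^2$, using the refined Kato inequality $\bigl|\nabla|du|\bigr|\leq|\nabla du|$ to cancel the Hessian terms, substituting $(d\Delta_g u,du)_g=F'(u)|du|^2$ (controlled because $F$ is locally Lipschitz and $u$ is bounded), and discarding the term $-Ric_g(\nabla u,\nabla u)$, which is non-positive precisely because $Ric_g\geq(n-1)g$, one is left with $\Delta_g|du|\leq c|du|$ where $c$ depends only on the local Lipschitz constant of $F$. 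This is exactly the computation carried out in Proposition~2.3 of \cite{Thesis}, so I would cite it rather than reproduce the estimate in full.

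Next I would check the spectral hypothesis $\lambda_1(S_x)\geq(n-1)$ for every $x\in X$. By Remark~\ref{rem} each tangent sphere $S_x$ is again an admissible stratified space, now of dimension $(n-1)$; applying the singular Lichnerowicz theorem to $S_x$ yields $\lambda_1(S_x)\geq(n-1)$ immediately. With both hypotheses in force, the general proposition applies verbatim and delivers the bound \eqref{tubEst} for $|\nabla u|$, which is the content of Proposition~\ref{reg}.

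The genuinely hard analytic input --- Theorem~A of \cite{ACM14} together with the Moser iteration that produces the precise $\sqrt{|\ln\eps|}$ growth away from the singular tube --- is absorbed into the proof of the general proposition and is treated here as known. Consequently the only real obstacle in the present statement is making sure that the Bochner step is legitimate despite the singularities: that the formula and the Kato inequality hold on $\Omega$ and, crucially, that the constant $c$ produced is independent of any cut-off or approximation used near $\Sigma$. Once this is granted (again via the cited results), the proof of Proposition~\ref{reg} reduces to the bookkeeping verification of hypotheses described above.
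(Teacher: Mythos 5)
Your proposal is correct and takes essentially the same route as the paper: the paper likewise obtains Proposition \ref{reg} by verifying the two hypotheses of the preceding proposition, namely \eqref{LaplacienGrad} via the Bochner--Lichnerowicz formula together with $Ric_g\geq (n-1)g$ (citing Proposition 2.3 in \cite{Thesis}), and $\lambda_1(S_x)\geq (n-1)$ via Remark \ref{rem} and the singular Lichnerowicz theorem. The only quibble is terminological: the inequality $\bigl|\nabla|du|\bigr|\leq|\nabla du|$ you invoke is the ordinary Kato inequality, not the refined one.
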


Under the assumptions of Theorem \ref{Obata}, there exists a metric $\tilde{g}=u^{\frac{4}{n-2}}g$ with constant scalar curvature $S_{\tilde{g}}$ equal to $S_g$, where $u$ is a non-negative solution to:
\begin{equation*}
\Delta_g u+a_n S_gu=a_nS_{g}u^{\frac{n+2}{n-2}}.
\end{equation*}
Since $S_g$ is equal to a constant, the function:  
\begin{equation*}
F(x)=(x^{\frac{4}{n-2}}-1)a_nS_g x.
\end{equation*}
is a locally Lipschitz function, and then we can apply Proposition \ref{reg} to the Yamabe minimizer $u$. Furthermore, we can deduce that the gradient of $u$ belongs to $\Sob{2}\cap L^{\infty}(X)$: this is done by means of an appropriate family of cut-off functions and with an argument that we developed in the proof of the singular Lichnerowicz theorem. 

\begin{lemma}
\label{reg1}
Let $(X,g)$ be an admissible stratified space of dimension $n$ with Einstein metric. Then the gradient $|\nabla u|$ of a solution $u$ to the Yamabe equation belongs to $L^{\infty}(X) \cap \Sob{2}$. 
\end{lemma}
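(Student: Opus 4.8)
The plan is to prove the two assertions $|\nabla u| \in W^{1,2}(X)$ and $|\nabla u| \in L^{\infty}(X)$ in turn, combining the Bochner--Lichnerowicz formula, Kato's inequality, the a priori logarithmic gradient estimate of Proposition \ref{reg}, and a Moser iteration based on the Sobolev inequality \eqref{SobP}. The Einstein hypothesis enters twice: it gives $Ric_g=(n-1)g$ on $\Omega$, and it turns the Yamabe equation into $\Delta_g u=F(u)$ with $F$ locally Lipschitz on the range of $u$, so that $(\nabla\Delta_g u,\nabla u)_g=F'(u)|\nabla u|^{2}$ is pointwise bounded by $C|\nabla u|^{2}$ because $u\in L^{\infty}(X)$.

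First I would establish that $\nabla du\in L^{2}(X)$, which through Kato's inequality $|\nabla|\nabla u||\leq|\nabla du|$ immediately yields $|\nabla u|\in W^{1,2}(X)$. Writing Bochner with the positive Laplacian convention,
\begin{equation*}
|\nabla du|^{2}=-\tfrac12\Delta_g|\nabla u|^{2}+(\nabla\Delta_g u,\nabla u)_g-Ric_g(\nabla u,\nabla u),
\end{equation*}
the last two terms are bounded by $C|\nabla u|^{2}$, with $|\nabla u|^{2}\in L^{1}(X)$. I would integrate this identity against $\cutoff^{2}$, where $\cutoff$ is the standard logarithmic cut-off vanishing in the $\eps^{2}$-tubular neighbourhood of $\Sigma$, equal to one outside the $\eps$-neighbourhood, and satisfying $\int_X|\nabla\cutoff|^{2}\leq C|\ln\eps|^{-1}$ (this decay uses that $\Sigma$ has codimension at least two). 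After integrating the term $-\tfrac12\int\cutoff^{2}\Delta_g|\nabla u|^{2}$ by parts one is left with $-\int\cutoff(\nabla\cutoff,\nabla|\nabla u|^{2})_g$; bounding it with Kato and Young's inequality absorbs half of $\int\cutoff^{2}|\nabla du|^{2}$ into the left-hand side and leaves the error $\int|\nabla\cutoff|^{2}|\nabla u|^{2}$. This is precisely where Proposition \ref{reg} is used: on the support of $\nabla\cutoff$ the distance $r$ to $\Sigma$ satisfies $r\geq\eps^{2}$, hence $|\nabla u|^{2}\leq C|\ln\eps|$ by \eqref{tubEst}, and the error is at most $C|\ln\eps|\cdot|\ln\eps|^{-1}=C$, uniformly in $\eps$. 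Letting $\eps\to0$ by monotone convergence gives $\int_X|\nabla du|^{2}<\infty$.

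To obtain the $L^{\infty}$ bound I would first upgrade the classical inequality \eqref{LaplacienGrad}, namely $\Delta_g|\nabla u|\leq c|\nabla u|$ on $\Omega$, to a global weak subsolution inequality on $X$. Writing $w:=|\nabla u|$, testing against $\cutoff^{2}\psi$ for $0\leq\psi\in W^{1,2}(X)$, integrating by parts and letting $\eps\to0$, the cut-off error is controlled by $\norm{dw}_{2}(\int_X|\nabla\cutoff|^{2})^{1/2}\to0$, which uses only $w\in W^{1,2}(X)$ from the previous step; since $C^{\infty}_0(\Omega)$ is dense in $W^{1,2}(X)$, this shows that $w$ is a non-negative $W^{1,2}$ subsolution of $\Delta_g w\leq cw$ on the whole of $X$. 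A standard Moser iteration, which is legitimate here because the Sobolev inequality \eqref{SobP} of Theorem \ref{BakryS} holds on the entire admissible stratified space, then yields $\norm{w}_{L^{\infty}(X)}\leq C\norm{w}_{2}<\infty$, completing the proof.

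The main obstacle is the passage from $\Omega$ to $X$ across the singular set: both the Bochner identity and the differential inequality hold only classically on the regular part, and one must certify that the cut-off error terms genuinely vanish in the limit. The delicate point in the first step is that the only a priori control near $\Sigma$ is the \emph{diverging} logarithmic bound \eqref{tubEst}, so $W^{1,2}$ regularity cannot be assumed and must be bootstrapped; the exact matching between the $|\ln\eps|$ growth of $|\nabla u|^{2}$ and the $|\ln\eps|^{-1}$ decay of $\int_X|\nabla\cutoff|^{2}$, itself a consequence of the codimension-two condition, is what makes the estimate close. Once $w\in W^{1,2}(X)$ is secured, the concluding Moser iteration is routine.
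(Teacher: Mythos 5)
Your proof is correct and takes essentially the same route as the paper's: Bochner--Lichnerowicz on $\Omega$ combined with the Einstein bound and $|F'(u)|\leq c$, cut-off functions adapted to the logarithmic gradient estimate of Proposition \ref{reg} to obtain $\nabla du\in L^2(X)$ and hence $|\nabla u|\in W^{1,2}(X)$, then passage to the weak inequality $\Delta_g|\nabla u|\leq c|\nabla u|$ on all of $X$ and Moser iteration via the Sobolev inequality \eqref{SobP} for the $L^{\infty}$ bound. The only difference is a technical variant inside the same method: you test with $\cutoff^2$, keep one derivative on the cut-off and absorb via Kato and Young, using $\int_X|\nabla\cutoff|^2\lesssim|\ln\eps|^{-1}$ against the $|\ln\eps|$ growth of $|\nabla u|^2$, whereas the paper integrates by parts twice and uses cut-offs built so that $\int_X\Delta_g(\cutoff)|\nabla u|^2\,dv_g\to 0$.
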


We briefly sketch the key points of the proof. The details can be found in the proof of Theorem 2.5 in \cite{Thesis}. Let $u \in \Sob{2} \cap L^{\infty}(X)$ be a Yamabe minimizer and $F$ as above. On the regular set the Bochner-Lichnerowicz formula holds, and therefore we have:
\begin{equation*}
\nabla^*\nabla du +Ric_g(du) = F'(u)du \mbox{ on } \Omega. 
\end{equation*}
Since $u$ is bounded and $F$ is locally Lipschitz, there exists a positive constant $c$ such that $|F'(u)|\leq c$. Moreover, thanks to the fact that $Ric_g=(n-1)g$ there exists a positive constant $c_1$ such that:
\begin{equation*}
\frac 12 \Delta_g (|\nabla u|^2)=(\nabla^*\nabla du, du)-|\nabla du|^2 \leq c_1 |\nabla u|^2 -|\nabla du|^2. 
\end{equation*}
In the proof of the singular Lichnerowicz theorem in \cite{Mondello} we defined a family of cut-off functions $\cutoff$, $0 \leq \cutoff \leq 1$, being equal to one outside a tubular neighbourhood $\Sigma^{2\eps}$ of the singular set, vanishing on $\Sigma^{\eps}$. Furthermore, if the estimate $\eqref{tubEst}$ holds  for $u$, then the cut-off functions $\cutoff$ are constructed in such a way that they satisfy: 
\begin{equation*}
\lim_{\eps \rightarrow 0}\int_X (\nabla u,\nabla \cutoff)dv_g=0, \quad \lim_{\eps \rightarrow 0} \int_X \Delta_g(\cutoff) |\nabla u|^2dv_g=0. 
\end{equation*}
If we multiply the previous inequality by $\cutoff$ and integrate by parts we obtain: 
\begin{equation}
\label{gradientYamabe}
\frac 12 \int_X (\Delta_g \cutoff) |\nabla u|^2dv_g \leq c_1 \int_X \cutoff |\nabla u|^2 dv_g -\int_X \cutoff |\nabla du|^2 dv_g. 
\end{equation}
The left-hand side of $\eqref{gradientYamabe}$ tends to zero as $\eps$ tends to 0: as a consequence, the norm in $L^2(X)$ of $\nabla du$ is bounded by the one of $|\nabla u|$, which is finite. This means that $\nabla |\nabla u|$ belongs to $L^2(X)$, and $|\nabla u|$ to $W^{1,2}(X)$. 

We also know that on the regular set $\Omega$ the inequality $\Delta_g |\nabla u| \leq c_1 |\nabla u|$ is satisfied. One can prove that this implies the weak inequality $\Delta_g |\nabla u|\leq c_1 |\nabla u|$ on the whole $X$, again by integrating by parts and by using the cut-off functions $\cutoff$. Finally, a positive function $f$ in $W^{1,2}(X)$ satisfying the weak inequality $\Delta_g f \leq c_1 f$ on $X$ belongs to $L^{\infty}(X)$, thanks to the Moser iteration technique (see Proposition 1.8 in \cite{ACM12}). Therefore $u$ belongs to $W^{2,2}(X)$ and its gradient $|\nabla u|$ is bounded. \newline 

We are now in position to prove Theorem \ref{Obata}.

\begin{proof}[Proof of Theorem \ref{Obata}]
By assumption, there exists a conformal metric $\tilde{g}\in[g]$ with constant scalar curvature: we can assume without loss of generality that $S_{\tilde{g}}=S_g=n(n-1)$. Since $\tilde{g}$ is not homothetic to $g$, there exists a function $u \in \Sob{2}{X}\cap L^{\infty}(X)$ solving the Yamabe equation:
\begin{equation*}
\Delta_g u +\frac{n(n-2)}{4}u = \frac{n(n-2)}{4} u^{\frac{n+2}{n-2}}
\end{equation*}
and such that $\tilde{g}=u^{\frac{4}{n-2}}g$. In order to simplify the transformation formulas under conformal change, we can define $\phi=u^{-\frac{2}{n-2}}$, so that $\tilde{g}=\phi^{-2}g$. We know from Theorem 1.12 in \cite{ACM12} that $u$, and thus $\phi$, is positive and bounded. By the previous discussion we also have that the gradient of $u$, and therefore the gradient of $\phi$, belongs to $L^{\infty}(X)$. 

Consider the traceless Ricci tensor $E_{\tilde{g}}$ and recall that $\tilde{g}$ is an Einstein metric if and only if $E_{\tilde{g}}$ vanishes: our goal is to show that this is the case. The transformation law for the traceless Ricci tensor under a conformal change (see for example \cite{Besse}) gives us the following formula for $E_{\tilde{g}}$:
\begin{equation*}
E_{\tilde{g}}= E_g+(n-2)\phi^{-1}\Big(\nabla^2\phi + \frac{\Delta_g\phi}{n}g\Big)
\end{equation*}
where the covariant derivatives are taken with respect to $g$. Since by assumption $g$ is an Einstein metric, $E_g=0$.
Then consider the following integral:
\begin{equation*}
I_{\eps}=\int_X \cutoff \phi |E_{\tilde{g}}|_g^2dv_g
\end{equation*}
where $\cutoff$ is chosen like in the proof of Lemma \ref{reg1}. If we show that $I_{\eps}$ tends to zero as $\eps$ goes to zero, then the norm of $E_{\tilde{g}}$ must vanish: as a consequence we will obtain that $\tilde{g}$ is an Einstein metric and that its conformal factor $\phi$ satisfies $\eqref{csf}$. Let us rewrite $I_{\eps}$ in the appropriate form:
\begin{align*}
I_{\eps} 
& = \int_X \cutoff \phi\left(E_{\tilde{g}}, (n-2)\phi^{-1}\left( \nabla d\phi + \frac{\Delta_g \phi}{n} g\right) \right)_g dv_g \\
 & = (n-2)\int_X \cutoff \left( E_{\tilde{g}}, \nabla d\phi + \frac{\Delta_g \phi}{n} g\right)_g dv_g \\
 & = (n-2) \int_X \cutoff \left( E_{\tilde{g}}, \nabla d\phi\right)_g dv_g.
\end{align*}
Then we integrate by parts:
\begin{equation*}
\int_X \cutoff \left( E_{\tilde{g}}, \nabla d\phi\right)_g dv_g = \int_X  (E_{\tilde{g}}^{ij}\nabla_j \cutoff \nabla_i \phi + \cutoff \nabla_j E_{\tilde{g}}^{ij} \nabla_i \phi) dv_g. 
\end{equation*}
Since the scalar curvature of $\tilde{g}$ is constant, by the Bianchi identity (see also \cite{BourguignonEzin}), which holds on the regular set of $X$, the second term of this integral is equal to zero. The first one leads to:
\begin{equation}
\label{Ieps}
I_{\eps}=(n-2)^2\int_X \phi^{-1}\left( \nabla d\phi (\nabla \cutoff, \nabla \phi) + \frac{\Delta_g \phi}{n}(\nabla \cutoff, \nabla \phi)_g\right) dv_g. 
\end{equation}
Observe that $\phi^{-1}$ is positive and bounded, because the solution $u$ to the Yamabe equation is positive and bounded thanks to Theorem 1.12 in \cite{ACM12}. We claim that the Laplacian of $\phi$ is bounded as well. In fact, if we denote $p=-\frac{2}{n-2}$ we have:
\begin{equation*}
\Delta_g \phi= p u^{p-1}\left( \Delta_g u - (p-1)\frac{|\nabla u|^2}{u}\right). 
\end{equation*}
As we recalled above, the function $u$ is bounded and positive, then its Laplacian $\Delta_g u$ is bounded, since it is equal to:
\begin{equation*}
\Delta_g u= \frac{n(n-2)}{4}u(u^{\frac{4}{n-2}}-1). 
\end{equation*}
Moreover, by the previous Lemma the gradient $|\nabla u|$ belongs to $L^{\infty}(X)$, so that the same holds for $\Delta_g \phi$. Therefore, if we consider the last term in $\eqref{Ieps}$, we know that $\cutoff$ is chosen in such a way that the integral of $(\nabla \cutoff, \nabla u)$ goes to zero as $\eps$ tends to zero.

As for the first term in $\eqref{Ieps}$, we can integrate by parts and obtain:
\begin{equation*}
\int_X  \nabla d\phi (\nabla \cutoff, \nabla \phi) dv_g =\frac{1}{2} \int_X \cutoff \Delta_g |\nabla \phi|^2 dv_g = \frac 12 \int_X (\Delta_g \cutoff)|\nabla \phi|^2 dv_g. 
\end{equation*}
The cut-off functions $\cutoff$ are chosen in such a way that this last term tends to zero as $\eps$ goes to zero as well. 

As a consequence, we have shown that $I_{\eps}$ tends to zero as $\eps$ goes to zero. Therefore we obtain that the norm of the traceless Ricci tensor $E_{\tilde{g}}$ is equal to zero, the metric $\tilde{g}$ is an Einstein metric and the function $\phi$ satisfies $\eqref{csf}$, as we wished. 
\end{proof}

A scalar function solving the equation $\eqref{csf}$ is called in the literature a concircular scalar field. The existence of a concircular scalar field or of a conformal vector field on a compact, or complete, smooth manifold can lead to various consequences. For example, Y.~Tashiro in \cite{Tashiro} classified complete manifolds possessing a concircular scalar field. See also Sections 2 and 3 of \cite{Montiel} for a brief but complete presentation of some known results about the subject. 

In our case, the previous theorem leads to the following:

\begin{cor}
\label{YamabeSimpleEdges}
Let $(X,g)$ be an admissible Einstein stratified space of dimension $n$ admitting a Yamabe minimizer
\begin{equation*}
\tilde{g}=\phi^{-2/(n-2)}g
\end{equation*}
Assume that $\phi$ is not a constant function. Then the Einstein metric $g$ attains the Yamabe constant, which is consequently equal to
\begin{equation*}
Y(X,[g])= \frac{n(n-2)}{4}\vol_g(X)^{\frac{2}{n}}.
\end{equation*}
\end{cor}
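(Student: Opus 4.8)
The conclusion has two parts — the value of $Y(X,[g])$ and the fact that $g$ itself realizes it — and the plan is to obtain both from the eigenfunction produced by Theorem \ref{Obata} together with the sharp Sobolev inequality of Theorem \ref{BakryS}. After rescaling I would normalize the Einstein constant to $Ric_g=(n-1)g$, so that $S_g=n(n-1)$. Since $\phi$ is not constant, $\tilde g$ is not homothetic to $g$ and Theorem \ref{Obata} applies, giving that $\phi$ is a concircular scalar field: $\nabla d\phi=-\tfrac{\Delta_g\phi}{n}g$ on $\Omega$. The first step is to promote this to an eigenvalue equation. Writing the equation as $\nabla d\phi=\rho g$ with $\rho=-\Delta_g\phi/n$ and differentiating on $\Omega$, a contraction of the Ricci identity for the third covariant derivative, together with $Ric_g=(n-1)g$, yields $\nabla\rho=-\nabla\phi$; integrating on the connected regular set shows that $\Delta_g\phi-n\phi$ is constant on $\Omega$, hence on $X$. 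A suitable constant shift $\psi$ of $\phi$ is then a nonconstant eigenfunction of $\Delta_g$ with eigenvalue $n$, so $\lambda_1(X)=n$.

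To read off the value of the Yamabe constant I would use Theorem \ref{BakryS} at the critical exponent $p=\p$, where the inequality becomes $\vol_g(X)^{2/n}\norm{f}_{\p}^2\leq\norm{f}_2^2+\tfrac{4}{n(n-2)}\norm{df}_2^2$. Rewriting the Hilbert-Einstein functional of a conformal metric $f^{4/(n-2)}g$ as $(\norm{df}_2^2+\tfrac{n(n-2)}{4}\norm{f}_2^2)/\norm{f}_{\p}^2$ and combining it with this inequality gives $Y(X,[g])\geq\tfrac{n(n-2)}{4}\vol_g(X)^{2/n}$, recovering the lower bound recalled above from \cite{Mondello}. A direct evaluation of $Q(g)$ from $S_g=n(n-1)$ and $a_n=\tfrac{n-2}{4(n-1)}$ — equivalently, testing the Sobolev inequality with $f\equiv 1$ — gives exactly $\tfrac{n(n-2)}{4}\vol_g(X)^{2/n}$ and saturates the inequality. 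Hence the constant function is a Sobolev extremal, $g$ attains the Yamabe constant, and its value is $\tfrac{n(n-2)}{4}\vol_g(X)^{2/n}$; consistently, the eigenfunction above and the equivalence established earlier between $\lambda_1(X)=n$ and the existence of Sobolev extremals place us precisely in this equality case.

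The routine computations aside, the delicate point is the Obata-type manipulation of the concircular field on a singular space. The differentiation of the Hessian equation, the contraction of the Ricci identity, and the integration yielding $\Delta_g\phi-n\phi=\text{const}$ are all carried out on the regular set and must then be propagated across the singular strata. This is where I expect the real work: one needs the regularity and the gradient and Hessian bounds of $\phi$ supplied by Proposition \ref{reg} and Lemma \ref{reg1}, and an integration by parts against the cut-off functions $\cutoff$, exactly as in the proof of Theorem \ref{Obata}, so that all boundary contributions near $\Sigma$ vanish in the limit $\eps\to 0$. Once $\psi$ is known to be a genuine eigenfunction on all of $X$, the identification $\lambda_1(X)=n$ and the saturation of the Sobolev inequality are immediate.
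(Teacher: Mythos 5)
Your proposal is correct, but it takes a genuinely different route from the paper's own proof. The paper stays entirely at the level of the Yamabe equation: it deforms the minimizer along the explicit family $\phi_t=(1-t)\phi+t$ of solutions of $\eqref{csf}$, producing a path $g_t=\phi_t^{-2}g$ of conformal Einstein metrics with the same constant scalar curvature joining $\tilde{g}$ to $g$; differentiating the Yamabe equation in the family parameter shows that $\dot{v}_0=\dot{u}_t/u_t$ satisfies $\Delta_{g_t}\dot{v}_0=n\dot{v}_0$, hence has zero mean, so $\vol_{g_t}(X)$ --- and therefore the functional $Q(g_t)$ --- is constant in $t$, which forces $g$ to attain the Yamabe constant. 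You instead extract the eigenfunction directly from the concircular equation via the classical Obata--Tashiro commutation (contracting the Ricci identity to get $\nabla\rho=-\nabla\phi$, so $\Delta_g\phi-n\phi$ is constant on $\Omega$ and a constant shift of $\phi$ is an eigenfunction for the eigenvalue $n$), and you obtain the value and the attainment of $Y(X,[g])$ from the critical-exponent Sobolev inequality together with the direct evaluation of $Q(g)$. Two comparative remarks. First, your Sobolev step is exactly the Proposition recalled from \cite{Mondello} --- it uses neither the existence of the minimizer nor the nonconstancy of $\phi$ --- so for the bare statement of the corollary it is a legitimate shortcut; but the paper's proof is deliberately an \emph{alternative} proof routed through the minimizer, whose whole point is to manufacture the eigenvalue-$n$ eigenfunction feeding into Corollary \ref{rigidité}. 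Your Obata--Tashiro step supplies such an eigenfunction as well, so the rigidity conclusion survives in your approach. Second, the trade-off you flag is real: your route needs the pointwise third-derivative identity on $\Omega$ and then its propagation across $\Sigma$ (via Proposition \ref{reg}, Lemma \ref{reg1} and the cut-offs $\cutoff$, as in the proof of Theorem \ref{Obata}), whereas in the paper the eigen-equation for $\dot{v}_0$ is obtained by differentiating in $h$ the weak Yamabe equation for the explicit family $v_h=u_{t+h}/u_t$, so no additional boundary analysis near the singular set is required. One small caution: constant functions saturate $\eqref{SobP}$ at the critical exponent on \emph{any} admissible stratified space, so your remark that ``the constant function is a Sobolev extremal'' does not by itself place you in the equality case of the Myers equivalence theorem, whose item (iii) concerns nontrivial extremals; what legitimately places you there is your nonconstant eigenfunction, i.e.\ the identity $\lambda_1(X)=n$.
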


\begin{proof}
We have proven in the previous theorem that any metric with constant scalar curvature in the conformal class of $g$ is an Einstein metric and it is determined by a positive solution of $\eqref{csf}$. Up to multiplying by a constant, a positive solution of $\eqref{csf}$ is given by 
\begin{equation*}
\phi_t=(1-t)\phi+t 
\end{equation*}
for some $t \in [0,1)$. Let us denote:
\begin{equation*}
u_t=\phi^{-\frac{2n}{n-2}}_t.
\end{equation*}
the corresponding solution to the Yamabe equation. The metric $g_t=\phi_t^{-2}g$ is still an Einstein metric in the conformal class of $g$ and has the same scalar curvature as $g$.

We want to show that the volume of $X$ with respect to the metric $g_t$ is constant in $t$: this means that it is constant among the metrics with constant scalar curvature equal to $n(n-1)$. In this way, the ratio
\begin{equation*}
\displaystyle Q(\tilde{g})=\frac{\displaystyle a_n \int_X\mbox{Scal}_{\tilde{g}}dV_{\tilde{g}}}{\mbox{Vol}_{\tilde{g}}(X)^{1-\frac{2}{n}}}
\end{equation*}
does not decrease in the set of conformal metrics with constant scalar curvature. As a consequence, the Yamabe constant of $(X,g)$ will be attained by $g$ and it is equal to:
\begin{equation*}
Y(X,[g])= \frac{n(n-2)}{4}\vol_g(X)^{\frac{2}{n}}.
\end{equation*}
The volume of $X$ with respect to $g_t$ is given by the formula
\begin{equation*}
\mbox{Vol}_{g_t}(X)=\int_X u_t^{\frac{2n}{n-2}}dv_g=\int_X dv_{g_t}.
\end{equation*}
where we denote with $dv_{g_{t}}$ the volume element with respect to $g_t$. 
If we differentiate with respect to $t$ we get
\begin{equation}
\label{der}
\frac{\mbox{d}}{\mbox{dt}}\mbox{Vol}_{g_t}(X)=\frac{2n}{n-2}\int_X u_t^{\frac{n+2}{n-2}}\dot{u}_tdv_g=\frac{2n}{n-2}\int_X \frac{\dot{u}_t}{u_t}dv_{g_t}.
\end{equation}
We are going to show that this integral is equal to zero. If we set
\begin{align*}
v_h &=\frac{u_{t+h}}{u_t}. \\
g_h & =v_h^{\frac{4}{n-2}}g_t=u_{t+h}^{\frac{4}{n-2}}g.
\end{align*}
then $v_h$ satisfies the Yamabe equation with respect to $g_t$:
\begin{equation*}
\Delta_{g_t}v_h+\frac{n(n-2)}{4}v_h=\frac{n(n-2)}{4}v_h^{\frac{n+2}{n-2}}.
\end{equation*}
By deriving this equality with respect to $h$ we obtain
\begin{equation*}
\Delta_{g_t}\dot{v}_h+\frac{n(n-2)}{4}\dot{v}_h=\frac{n(n+2)}{4}v_h^{\frac{4}{n-2}}\dot{v}_h.
\end{equation*}
and when $h=0$ we have as a consequence $\Delta_{g_t}\dot{v}_0=n\dot{v}_0$, that is $v_0$ is an eigenfunction relative to the first eigenvalue $n$ of $\Delta_{g_t}$. 
Any eigenfunction relative to the first eigenvalue has mean equal to zero over $X$, so that we have: 
\begin{equation*}
\int_X \dot{v}_0 dV_{g_t}=0.
\end{equation*}
But by definition $\dot{v}_0$ is equal to $\displaystyle \frac{\dot{u}_t}{u_t}$. Recalling $\eqref{der}$ we have obtained that the volume of $X$ is constant with respect to $t$: this implies that the Einstein metric $g$ attains the Yamabe constant, as we wished.
\end{proof}

\begin{cor}
\label{rigidité}
Let $(X^n, g)$ be an Einstein admissible stratified space. If there exists $\tilde{g}$ in the conformal class of $g$, not homothetic to $g$, with constant scalar curvature, then $(X^n,g)$ is isometric to the spherical suspension of an Einstein admissible stratified space of dimension $(n-1)$.
\end{cor}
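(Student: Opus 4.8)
The plan is to use Theorem \ref{Obata} to produce a concircular scalar field, to upgrade it to a genuine first eigenfunction of $\Delta_g$ by exploiting the Einstein condition, and then to conclude with the Singular Obata theorem \ref{ObataSing}. Since $g$ is an admissible Einstein metric and $\tilde{g}$ is conformal to $g$, not homothetic, with constant scalar curvature, Theorem \ref{Obata} applies: writing $\tilde{g}=\phi^{-2}g$, it provides a positive, bounded function $\phi$ with bounded gradient (by Lemma \ref{reg1}) satisfying the concircular equation \eqref{csf} on the regular set $\Omega$, namely $\nabla d\phi = -\tfrac{\Delta_g\phi}{n}\,g$. Because $\tilde{g}$ is not homothetic to $g$, the function $\phi$ is nonconstant.

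Next I would turn this concircular field into an eigenfunction. Setting $\rho=-\Delta_g\phi/n$, so that $\nabla d\phi=\rho\,g$ on $\Omega$, I would differentiate once more and apply the Ricci commutation identity for third derivatives, which is legitimate since $\phi$ is smooth and $g$ is a smooth Einstein metric on $\Omega$. Contracting the resulting identity and inserting $Ric_g=(n-1)g$ gives $\nabla\rho=-\nabla\phi$ on $\Omega$; as $\Omega$ is connected, there is a constant $c$ with $\rho=c-\phi$. Hence $\psi:=\phi-c$ satisfies
\begin{equation*}
\nabla d\psi = -\psi\,g \quad \text{on } \Omega,
\end{equation*}
and taking traces yields $\Delta_g\psi=n\psi$ on $\Omega$. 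Since $\phi$ is nonconstant, $\psi$ is a nonzero function of this type.

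Finally I would check that $\psi$ is a weak eigenfunction on all of $X$: it lies in $W^{1,2}(X)\cap L^{\infty}(X)$ because $\phi$ does, and the pointwise relation $\Delta_g\psi=n\psi$ on $\Omega$ is promoted to a weak identity on $X$ by the same cut-off functions $\cutoff$ used in the proofs of Lemma \ref{reg1} and Theorem \ref{Obata}. Combined with the Singular Lichnerowicz bound $\lambda_1(X)\geq n$, this forces $\lambda_1(X)=n$, so Theorem \ref{ObataSing} gives an isometry of $(X,g)$ with a spherical suspension $\bigl(\hat{X}^{n-1}\times[-\tfrac{\pi}{2},\tfrac{\pi}{2}],\,dt^2+\cos^2(t)\hat{g}\bigr)$ over an admissible stratified space $(\hat{X},\hat{g})$. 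It then remains to verify that $\hat{X}$ is Einstein: on the regular set this is a direct warped-product computation, since imposing $Ric_g=(n-1)g$ along the fibre directions of $dt^2+\cos^2(t)\hat{g}$ forces $Ric_{\hat{g}}=(n-2)\hat{g}$, i.e. $\hat{g}$ is admissible Einstein in dimension $n-1$. I expect the only delicate point to be this promotion of the concircular identity across the singular set into a bona fide global eigenfunction; the commutation-identity passage and the warped-product Einstein check are routine once the regularity furnished by Theorem \ref{Obata} and Lemma \ref{reg1} is in hand.
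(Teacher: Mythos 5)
Your proof is correct, but it proceeds along a genuinely different route from the paper's. The paper disposes of this corollary in one line, by pointing back to the argument inside Corollary \ref{YamabeSimpleEdges}: there one deforms along the family $\phi_t=(1-t)\phi+t$, $g_t=\phi_t^{-2}g$ of conformal Einstein metrics, sets $v_h=u_{t+h}/u_t$, and differentiates the Yamabe equation at $h=0$ to obtain $\Delta_{g_t}\dot v_0=n\dot v_0$; Theorem \ref{ObataSing} is then invoked. You instead run the classical Obata--Tashiro pointwise argument on $g$ itself: taking the divergence of the concircular equation \eqref{csf} and using the contracted commutation identity with $Ric_g=(n-1)g$ gives $\nabla\rho=-\nabla\phi$ for $\rho=-\Delta_g\phi/n$, hence $\psi=\phi-c$ satisfies $\nabla d\psi=-\psi g$ and $\Delta_g\psi=n\psi$ on $\Omega$; and since $\psi\in W^{1,2}(X)\cap L^{\infty}(X)$ while $C^{\infty}_0(\Omega)$ is dense in $W^{1,2}(X)$, this pointwise relation on $\Omega$ is already a weak eigenvalue equation on all of $X$ (testing against functions compactly supported in $\Omega$ suffices, so the cut-off machinery is barely needed), and the singular Lichnerowicz bound then forces $\lambda_1(X)=n$. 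Your route has two advantages. First, it produces an eigenfunction of $\Delta_g$ for the original metric, whereas the paper's $\dot v_0$ is an eigenfunction of $\Delta_{g_t}$ for $t\in[0,1)$, a family in which $g$ itself is the excluded endpoint $t=1$; to conclude literally for $(X,g)$ one must exchange the roles of $g$ and $\tilde g$ (legitimate, since Theorem \ref{Obata} makes $\tilde g$ Einstein as well), a point the paper leaves tacit. Second, you check explicitly, via the warped-product Ricci computation, that the cross-section $\hat X$ furnished by Theorem \ref{ObataSing} is Einstein, which the corollary asserts but the paper's proof never verifies. What the paper's route buys in exchange is economy: it needs no third-order commutation identity on the singular space, and the eigenfunction falls out of the deformation machinery already required for the volume-constancy argument showing that $g$ attains the Yamabe constant in Corollary \ref{YamabeSimpleEdges}.
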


\begin{proof}
The proof of the previous Corollary implies that there exists an eigenfunction $\dot{v}_0$ associated to the eigenvalue $n$, therefore we can apply Theorem \ref{ObataSing}. 
\end{proof}

If we collect Theorem \ref{Obata} and Corollaries \ref{YamabeSimpleEdges} and \ref{rigidité}, we have proven Theorem \ref{ObataYamabe}. 

\bibliography{BiblioObata.bib}

\end{document}